\begin{document}
	\begin{frontmatter}
		\title{Adaptive Estimation of Multivariate Piecewise Polynomials and Bounded Variation Functions by Optimal Decision Trees}              
		\runtitle{Adaptive estimation by optimal decision trees}  
		\runtitle{Adaptive estimation by optimal decision trees\;\;\;\;\;\;\;}

		\begin{aug}
			\author{\fnms{Sabyasachi} \snm{Chatterjee}\ead[label=e1]{sc1706@illinois.edu}}\footnote{{Supported by NSF Grant DMS-1916375}}
			\and
			\author{\fnms{Subhajit} \snm{Goswami}\ead[label=e2]{goswami@math.tifr.res.in}}\footnote{{Supported by an IDEX grant from Paris-Saclay and partially by a grant from the Infosys Foundation}}
			\blfootnote{Author names are sorted alphabetically.}
			
			\runauthor{Chatterjee, S. and Goswami S.}
			
			\affiliation{University of Illinois at Urbana-Champaign and Tata Institute of Fundamental Research 
			}

			\address{117 Illini Hall\\
	Champaign, IL 61820 \\
\href{mailto:sc1706@illinois.edu}{sc1706@illinois.edu}\\
	\phantom{as}}
\address{1, Homi Bhabha Road\\
	Colaba, Mumbai 400005, India\\
	\href{mailto:goswami@math.tifr.res.in}{goswami@math.tifr.res.in}\\
	\phantom{ac}}
			
		\end{aug}

		\begin{abstract}
			Proposed by \cite{donoho1997cart}, Dyadic CART is a nonparametric regression 
			method which computes a globally optimal dyadic decision tree and fits piecewise constant 
			functions in two dimensions. 
			In this article we 
			define and study Dyadic CART and a closely related estimator, namely Optimal Regression Tree (ORT), in the context of estimating piecewise smooth functions in 
			general dimensions in the fixed design setup. More precisely, these optimal decision tree estimators fit piecewise polynomials of any given degree. 
			Like Dyadic CART in two dimensions, we reason that 
			these estimators can also be computed in polynomial 
			time in the sample size $N$ via dynamic programming. 
			We prove oracle inequalities for the finite 
			sample risk of Dyadic CART and ORT which imply tight {risk} bounds for several function 
			classes of interest. Firstly, they imply that the 
			finite sample risk of ORT of {\em order} $r \geq 0$ is 
			always bounded by $C k \frac{\log N}{N}$ 
			whenever the regression function is 
			piecewise polynomial of degree $r$ on some reasonably regular axis aligned 
			rectangular partition of the domain with at most $k$ 
			rectangles. 
			Beyond the univariate case, such guarantees are 
			scarcely available in the literature for computationally efficient estimators. Secondly, our 
			oracle inequalities uncover minimax rate optimality and adaptivity of the Dyadic CART estimator for function spaces with 
			bounded variation. We consider two function spaces of recent interest where multivariate total variation 
			denoising and univariate trend filtering are the state 
			of the art methods. We show that Dyadic CART enjoys certain advantages over these estimators while still 
			maintaining all their known guarantees.
		\end{abstract}

		\begin{keyword}
			Optimal Decision Trees, Dyadic CART, Piecewise Polynomial Fitting, Oracle Risk Bounds, Bounded Variation Function Estimation
		\end{keyword}
	\end{frontmatter}
\section{Introduction}\label{Sec:intro}
Decision Trees are a widely used technique for nonparametric regression and classification. Decision Trees result in interpretable models and form a building block for more complicated methods such as bagging, boosting and random forests. See~\cite{loh2014fifty} and references therein for a detailed review. The most prominent example of decision trees is classification and regression trees (CART), proposed by~\cite{breiman1984classification}. CART operates in two stages. In the first stage, it recursively partitions the 
space of predictor variables in a greedy top down fashion. Starting from the root node, a locally optimal split is determined by an appropriate optimization criterion and then the process is iterated 
for each of the resulting child nodes. The final partition or decision tree is reached when a stopping criterion is met for each resulting node. In the second stage, the final tree is pruned by 
what is called \textit{cost complexity pruning}  where the cost of a pruned tree thus obtained is proportional to the number of the leaves of the tree; see
\:Section $9.2$ in~\cite{friedman2001elements} for details. 

A possible shortcoming of CART is that it produces locally optimal decision trees. It is natural to attempt to resolve this by computing a globally optimal decision tree. However, computing globally optimal decision tree is computationally a hard problem. It is known (see~\cite{laurent1976constructing}) that computing an optimal (in a particular sense) binary tree is NP hard. A recent paper of~\cite{bertsimas2017optimal} sets up an 
optimization problem (see in~\cite[Equation~$1$]{bertsimas2017optimal}) in the context of classification, which aims to minimize (among all decision trees) misclassification error of a tree plus a penalty proportional to its number of leaves. The paper formulates this problem as an instance of mixed integer optimization (MIO) and claims 
that modern MIO developments allow for solving reasonably sized problems. It then demonstrates extensive experiments for simulated and real data sets where the optimal tree outperforms the usual CART. These experiments seem to provide strong empirical evidence that optimal decision trees, if computed, can perform significantly better than CART. Another shortcoming of CART is that it is typically very hard to theoretically analyze the full algorithm because of the sequence of data dependent splits. Some results (related to the current paper) exist for the subtree obtained in the pruning stage, conditional on the maximal tree grown in the first stage; see~\cite{gey2005model} and references therein. Theoretical guarantees for the widely used Random Forests are also typically hard to obtain inspite of much recent work see~\cite{scornet2015consistency},~\cite{wager2015adaptive},~\cite{ishwaran2015effect} and references therein. On the other hand, theoretical analysis for optimal decision trees can be obtained since it can be seen as penalized empirical risk minimization.

One class of decision trees for which an optimal tree can be computed efficiently, in low to moderate dimensions, is the class of \textit{dyadic decision trees}. These trees are constructed from recursive dyadic partitioning. In the case of regression on a two-dimensional grid design, the paper~\cite{donoho1997cart} proposed a penalized least squares estimator called the Dyadic CART estimator. The author showed that it is possible to compute this estimator by a fast bottom up dynamic program which has linear time computational complexity $O(n \times n)$ for a $n \times n$ grid. Moreover, the author showed that Dyadic CART satisfies an oracle risk bound which in turn was used to show that it is adaptively minimax rate optimal over classes of anisotropically smooth bivariate functions. Ideas in this paper were later used in~\cite{nowak2004estimating} in the context of adaptively estimating piecewise Holder smooth functions. The idea of dyadic partitioning were also used in classification in papers such as~\cite{scott2006minimax}
and~\cite{blanchard2007optimal} who studied penalized empirical risk minimization over dyadic decision trees of a fixed maximal depth. They also proved oracle risk bounds and showed minimax rate optimality for appropriate classes of classifiers. Minimax rates of convergence have also been obtained for various models of dyadic classification trees in~\cite{lecue2008classification}. In the related problem of density estimation, dyadic partitioning estimators have also been studied in the context of estimating piecewise polynomial densities; see~\cite{willett2007multiscale}. This current paper focusses on the regression setting and follows this line of work of studying optimal decision trees, proving an oracle risk bound and then investigating implications for certain function classes of interest. The optimal decision trees we study in this paper are computable in time polynomial in the sample size. 

In particular, in this paper, we study two decision tree methods for estimating regression functions in general dimensions in the context of estimating some nonsmooth function classes of recent interest. We focus on the fixed lattice design case like in~\cite{donoho1997cart}. 
The first method is an optimal dyadic regression tree and is exactly the same as Dyadic CART in~\cite{donoho1997cart} when the dimension is $2$. The second method is an Optimal Regression Tree (ORT), very much in the sense of~\cite{bertsimas2017optimal}, applied to fixed lattice design regression. Here the estimator is computed by optimizing a penalized least squares criterion over the set of all --- not just dyadic --- decision trees. We make the crucial observation that this estimator can be computed by a dynamic programming approach when the design points fall on a lattice. Thus, for instance, one does not need to resort to mixed integer programming and this dynamic program has computational complexity polynomial in the sample size. This observation may be known to the experts but we are unaware of an exact reference. Like in~\cite{donoho1997cart} we show it is possible to prove an oracle risk bound (see Theorem~\ref{thm:adapt}) for both of our optimal decision tree estimators. We then apply this oracle risk bound to three function classes of recent interest by employing approximation theoretic inequalities and show that these optimal decision trees have excellent adaptive and worst case performance.

Overall in this paper, we revisit the classical idea of recursive partitioning in the context of finding answers to several
unsolved questions about some classes of functions of recent interest in the nonparametric regression 
literature. In the course of doing so, we have come up with as well as brought forward several interesting ideas from different areas relevant for the study of regression trees such as dynamic programming, computational geometry and discrete Sobolev type 
inequalities for vector / matrix approximation. We believe that the main novel 
aspect of the current work is to recognize, prove and point out --- by an amalgamation of 
these ideas --- that optimal regression trees often provide a better alternative to the state of 
the art convex optimization methods in the sense they are simultaneously 
(near-) minimax rate optimal, adaptive to the complexity of the underlying signal (under fewer assumptions) and 
computationally more efficient for some classes of functions of recent interest. To the best of our knowledge, our paper is the first one among a series of recent works that shows the efficacy of computationally efficient optimal regression tree estimators in these particular nonparametric regression problems.
We now describe the function classes we consider in this paper and briefly outline our results and contributions. 

\begin{itemize}
	\item \textbf{Piecewise Polynomial Functions}: We address the problem of estimating multivariate functions that are (or close to) \textit{piecewise polynomial} of some fixed degree on some unknown partition of the domain into axis aligned rectangles. This includes function classes such as piecewise constant/linear/quadratic etc. on axis aligned rectangles. An oracle, who knows the true rectangular partition, i.e the number of axis aligned rectangles and their arrangement, can just perform least squares separately for data falling within each rectangle. This oracle estimator provides a benchmark for adaptively optimal performance. The main question of interest to us is how to construct an estimator which is efficiently computable and attains risk as close as possible to the risk of this oracle estimator. 
	To the best of our knowledge, this question has not been answered in multivariate settings. In this paper, we propose that our optimal regression tree (ORT) estimator solves this question to a 
	considerable extent. Section~\ref{secmpp} describes all of our results under this topic. It is worthwhile to mention here that we 
	also focus on cases where the true rectangular partition does not correspond to any decision tree (see Figure~\ref{fig3}) which necessarily has a hierarchical structure. We call such partitions 
	nonhierarchical. Even for such nonhierarchical partitions, we make the case that ORT continues to perform well (see our results in Section~\ref{sec:non}). We are not aware of nonhierarchical 
	partitions being studied before in the literature. Here our proof technique uses results from computational geometry which relate the size of any given (possibly nonhierarchical) rectangular partition 
	to that of the minimal hierarchical partition refining it. 
	
	
	\smallskip

	\item \textbf{Multivariate Bounded Variation Functions}: Consider the function class whose total variation (defined later in Section~\ref{sec:tv}) is bounded by some number. This is a classical function class for nonparametric regression since it contains functions which demonstrate spatially heterogenous smoothness; see Section $6.2$ in~\cite{tibshiraninonparametric} and references therein. Perhaps, the most natural estimator for this class of functions is what is called the Total Variation Denoising (TVD) estimator. The two dimensional version of this estimator is also very popularly used for image denoising; see~\cite{rudin1992nonlinear}. It is known that a well tuned TVD estimator is minimax rate optimal for this class in all dimensions; see~\cite{hutter2016optimal} and~\cite{sadhanala2016total}. Also, in the univariate case, it is known that the TVD estimator adapts to piecewise constant functions and attains a near oracle risk with parametric rate of convergence; see~\cite{guntuboyina2020adaptive} and references therein. However, even in two dimensions, the TVD estimator provably cannot attain the near parametric rate of convergence for piecewise constant truths. This is a result (Theorem $2.3$) in a previous article by the same authors~\cite{chatterjee2019new}.

	It would be desirable for an estimator to attain the minimax rate among bounded variation functions as well as retain the near parametric rate of convergence for piecewise constant truths in multivariate settings. Our contribution here is to establish that Dyadic CART enjoys these two desired properties in all dimensions.  
	We also show that the Dyadic CART adapts to the intrinsic dimensionality of the function in a particular sense. Therorem~\ref{thm:dcadap} is our main result under this topic. Our proof technique for Theorem~\ref{thm:dcadap} involves a recursive partitioning strategy to approximate any given bounded variation function by a piecewise constant function (see Proposition~\ref{prop:division}). We prove an inequality which can be thought of as the discrete version of the classical Gagliardo-Sobolev-Nirenberg inequality (see Proposition~\ref{prop:gagliardo}) which plays a key role in the proof.

	As far as we are aware, Dyadic CART has not been investigated before in the context of estimating bounded variation functions. Coupled with the fact that Dyadic CART can be computed in time linear in the sample size, our results put forth the Dyadic CART estimator as a fast and viable option for estimating bounded variation functions. 
	
	\smallskip
	
	\item \textbf{Univariate Bounded Variation Functions of higher order}: 
	Higher order versions of the space of bounded variation functions has also been considered in nonparametric regression, albeit mostly in the univariate case. One can consider the univariate function 
	class of all $r$ times (weakly) differentiable functions, whose $r$ th derivative is of bounded variation. A seminal result of~\cite{donoho1998minimax} shows that a wavelet threshholding estimator attains the minimax rate in this problem. Locally adaptive regression splines, proposed by~\cite{mammen1997locally}, is also known to achieve the minimax rate in this problem. 
	Recently, Trend Filtering, proposed by~\cite{kim2009ell_1}, has proved to be a popular nonparametric regression method. Trend Filtering is very closely related to locally adaptive regression splines and is also minimax rate optimal over the space of higher order bounded variation functions; see~\cite{tibshirani2014adaptive} and references therein. Moreover, it is known that Trend Filtering adapts to functions which are piecewise polynomials with regularity at the knots. If the number of pieces is not too large and the length of the pieces is not too small, a well tuned Trend Filtering estimator can attain near parametric risk as shown in~\cite{guntuboyina2020adaptive}.

	Our main contribution here is to show that the univariate Dyadic CART estimator is also minimax rate optimal in this problem and enjoys near parametric rate of convergence for piecewise polynomials; see Theorem~\ref{thm:slowrate} and 
	Theorem~\ref{thm:fastrate}. Moreover, we show that Dyadic CART requires less regularity assumptions on the true function than what Trend Filtering requires for the near parametric rate of convergence to hold. Theorem~\ref{thm:fastrate} follows directly from a combination of our oracle risk bound and a result about refining an arbitrary (possibly non dyadic) univariate partition to a dyadic one (see Lemma~\ref{lem:1dpartbd}). Our proof technique for Theorem~\ref{thm:slowrate} again involves a recursive partitioning strategy to approximate any given higher order bounded variation function by a piecewise polynomial function (see Proposition~\ref{prop:piecewise}). We prove an inequality (see Lemma~\ref{lem:approxpoly}) quantifying the error of approximating a higher order bounded variation function by a single polynomial which plays a key role in the proof.

	Again, as far as we are aware, Dyadic CART has not been investigated before in the context of estimating univariate higher order bounded 
	variation functions. Coupled with the fact that Dyadic CART is computable in time 
	linear in the sample size, our results again provide a fast and viable alternative for estimating univariate higher order bounded variation functions. 

	
\end{itemize}

The oracle risk bound in Theorem~\ref{thm:adapt} which holds for the optimal decision trees studied in this paper may imply near optimal results for other function classes as well. In Section~\ref{Sec:discuss}, we mention some consequences of our oracle risk bounds for shape constrained function classes. We then describe a version of our estimators which can be implemented for arbitrary data with random design and also discuss an extension of our results for dependent noise.

\subsection{Problem Setting and Definitions}

Let us denote the $d$ dimensional lattice with $N$ points by $L_{d,n} \coloneqq \{1,\dots,n\}^d$ where $N = n^{d}.$ Throughout this paper we will consider the standard fixed design setting where we treat the $N$ design points as fixed and located on the $d$ dimensional grid/lattice $L_{d,n}.$ One may think of the design points embedded in $[0,1]^d$ and of the form $\frac{1}{n}(i_1,\dots,i_d)$ where $(i_1,\dots,i_d) \in L_{d,n}$. This lattice design is quite commonly used for theoretical studies in multidimensional nonparametric function estimation (see, e.g.~\cite{nemirovski2000topics}). The lattice design is also the natural setting for certain applications such as image denoising, matrix/tensor estimation. All our results will be for the lattice design setting. In Section~\ref{Sec:discuss}, we make some observations and comments about possible extensions to the random design case. 

Letting $\theta^*$ denote the evaluation on the grid of the underlying regression function $f$, our observation 
model becomes $y = \theta^* + \sigma Z$ where 
$y,\theta^*,Z$ are real valued functions on $L_{d,n}$ and 
hence are $d$ dimensional arrays. Furthermore, $Z$ is a noise array consisting of independent standard Gaussian entries and 
$\sigma > 0$ is an unknown standard deviation of the noise 
entries. For an estimator $\hat{\theta}$, we will evaluate 
its performance by the usual fixed design expected mean 
squared error $$\MSE(\hat{\theta},\theta^*) \coloneqq 
\frac{1}{N}\:\E_{\theta^*} \|\hat{\theta} - \theta^*\|^2.$$ 
Here $\|.\|$ refers to the usual Euclidean norm of an array 
where we treat an array as a vector in $\R^N.$

Let us define the interval of positive integers $[a,b] = \{i \in \Z_{+}: a \leq i \leq b\}$ where $\Z_{+}$ denotes 
the set of positive integers. For a positive integer $n$ we 
also denote the set $[1,n]$ by just $[n].$ A subset $R 
\subset L_{d,n}$ is called an \textit{axis aligned 
	rectangle} if $R$ is a product of 
intervals, i.e. $R = \prod_{i = 1}^{d} [a_i,b_i].$ 
Henceforth, we will just use the word rectangle to denote an 
axis aligned rectangle. Let us define a \textit{rectangular 
	partition} of $L_{d,n}$ to be a set of rectangles 
$\mathcal{R}$ such that (a) the rectangles in $\mathcal{R}$ 
are pairwise disjoint and (b) $\cup_{R \in \mathcal{R}} R = 
L_{d,n}.$

Recall that a multivariate polynomial of degree at most $r \geq 0$ is a finite linear combination of the monomials $\Pi_{i = 1}^{d} (x_i)^{r_i}$ satisfying $\sum_{i = 1}^{d} r_i 
\leq r.$ It is thus clear that they form a linear space of dimension 
$K_{r,d} \coloneqq {r + d - 1 \choose d - 1}.$ Let us now define the set of discrete multivariate polynomial arrays as 
\begin{align*}\label{eq:polydef}
	\mathcal{F}^{(r)}_{d,n} = \big\{\theta \in \R^{L_{d,n}}: \theta(i_1/n,\dots,i_d/n) = &f(i_1/n,\dots,i_d/n)\:\:\forall (i_1,\dots,i_d) \in [n]^d \\&
	\text{for some polynomial $f$ of degree at most $r$}\big\}.
\end{align*}

For a given rectangle $R \subset L_{d,n}$ and any $\theta \in \R^{L_{d,n}}$ let us denote the array obtained by restricting $\theta$ to $R$ by $\theta_{R}.$ We say that $\theta$ is a degree $r$ polynomial on the rectangle $R$ if $\theta_{R} = \alpha_{R}$ for some $\alpha \in \mathcal{F}^{(r)}_{d,n}.$ 

For a given array $\theta \in \R^{L_{d,n}}$, let \textit{$k^{(r)}(\theta)$ denote the smallest positive integer $k$ such that a set of $k$ rectangles $R_1,\dots,R_k$ form a rectangular partition of $L_{d,n}$ and the restricted array $\theta_{R_i}$ is a degree $r$ polynomial for all $1 \leq i \leq k.$} 
In other words, $k^{(r)}(\theta)$ is the cardinality of the minimal rectangular partition of $L_{d,n}$ such that $\theta$ is piecewise polynomial of degree $r$ on the partition.

\subsection{Description of Estimators}
The estimators we consider in this manuscript compute a data dependent decision tree (which is globally optimal in a certain sense) and then fit polynomials within each cell/rectangle of the decision tree. As mentioned before, computing decision trees greedily and then fitting a constant value within each cell of the decision tree has a long history and is what the usual CART does. Fitting polynomials on such greedily grown decision trees is a natural extension of CART and has also been proposed in the literature; see~\cite{chaudhuri1994piecewise}. The main difference between these estimators and our estimators is that our decision trees are computed as a global optimizer over the set of all decision trees. In particular, they are not grown greedily and there is no stopping rule that is required. The ideas here are mainly inspired by~\cite{donoho1997cart}. We now define our estimators precisely.


Recall the definition of $k^{(r)}(\theta).$ A natural estimator which fits piecewise polynomial functions of degree $r \geq 0$ on axis aligned rectangles is the following fully penalized LSE of order $r$:
\begin{equation*}
	\hat{\theta}^{(r)}_{\all,\lambda} \coloneqq \argmin_{\theta \in \R^{L_{d,n}}} \big(\|y - \theta\|^2 + \lambda k^{(r)}(\theta)\big).
\end{equation*}


Let us denote the set of all rectangular partitions of $L_{d,n}$ as $\mathcal{P}_{\all, d, n}.$ For each rectangular partition $\Pi \in \mathcal{P}_{\all, d, n}$ and each nonnegative integer $r$, let the (linear) subspace $S^{(r)}(\Pi)$ 
comprise all arrays which are degree $r$ polynomial on each of the rectangles constituting $\Pi.$ 
For a generic subspace $S \subset \R^N$ let us denote its dimension by $Dim(S)$ and the associated orthogonal projection matrix by $O_{S}.$ Clearly the dimension of the subspace $S^{(r)}(\Pi)$ is $K_{r,d} |\Pi|$ where $|\Pi|$ is the cardinality of the partition. Now note that we can also write $\hat{\theta}^{(r)}_{\all,\lambda} = O_{S^{(r)}(\hat{\Pi}(\lambda))} y$ where $\hat{\Pi}(\lambda)$ is a data dependent partition defined as 
\begin{equation}\label{eq:defplse}
\hat{\Pi}(\lambda) = \argmin_{\Pi: \Pi \in \mathcal{P}_{\all,d,n}}\big( \|y - O_{S^{(r)}(\Pi)} y\|^2 + \lambda |\Pi| \big).
\end{equation}

Thus, computing $\hat{\theta}^{(r)}_{\lambda,\all}$ really involves optimizing over all 
rectangular partitions $\Pi \in \mathcal{P}_{\all,d,n}.$ Therefore, one may anticipate that 
the major roadblock in using this estimator would be computation. For any fixed $d$, the cardinality of $\mathcal{P}_{\all, d, n}$ is at least stretched-exponential in $N.$ Thus, a brute 
force method is infeasible. However, for $d = 1$, a rectangular partition is a set of 
contiguous blocks of intervals which has enough structure so that a dynamic programming 
approach is amenable. The set of all multivariate rectangular partitions is a more 
complicated object and the corresponding computation is likely to be provably hard. This is 
where the idea of~\cite{donoho1997cart} comes in who considers the Dyadic CART estimator (for 
$r = 0$ and $d = 2$) for fitting piecewise constant functions. As we will now explain, it 
turns out that if we constrain the optimization in~\eqref{eq:defplse} to optimize over special 
subclasses of rectangular partitions of $L_{d,n}$, a dynamic programming approach again becomes tractable. The Dyadic CART estimator is one such constrained version of the optimization problem in~\eqref{eq:defplse}. We now precisely define these subclasses of rectangular partitions.


\subsubsection{Description of Dyadic CART of order $r \geq 0$}
Let us consider a generic discrete interval $[a,b].$ We define a \textit{dyadic split} of the interval to be a split of the interval $[a,b]$ into two equal intervals. To be concrete, the interval $[a,b]$ is split into the intervals $[a,a - 1 + \ceil{(b - a + 1)/2}]$ and $[a + 
\ceil{(b - a + 1)/2}, b].$ Now consider a generic rectangle $R = \prod_{i = 1}^{d} [a_i,b_i].$ A \textit{dyadic split} of the rectangle $R$ involves the choice of a coordinate $1 \leq j \leq d$ to be split and then the $j$-th interval in the product defining the rectangle $R$ undergoes a dyadic split. Thus, a dyadic split of $R$ produces two sub rectangles $R_1$ and $R_2$ where $R_2 = R \cap R_1^{c}$ and $R_1$ is of the following form for some $j \in [d]$,
\begin{equation*}
	R_1 = \prod_{i = 1}^{j - 1} [a_i,b_i] \times [a_j ,a_j - 1 + \ceil{(b_j - a_j + 1) / 2}] \times \prod_{i = j + 1}^{d} [a_i,b_i].
\end{equation*}

Starting from the trivial partition which is just $L_{d,n}$ itself, 
we can create a refined partition by dyadically splitting $L_{d,n}.$  This will result in a partition of $L_{d,n}$ into two rectangles. We can now keep on dividing recursively, generating new partitions. In general, if at some stage we have the partition $\Pi = (R_1,\dots,R_k)$, we can choose any of the rectangles $R_i$ and dyadically split it to get a refinement of $\Pi$ with $k + 1$ nonempty rectangles. \textit{A recursive dyadic partition} (RDP) is any partition reachable by such successive dyadic splitting. Let us denote the set of all recursive dyadic partitions of $L_{d,n}$ as $\mathcal{P}_{\rdp,d,n}.$ Indeed, a natural way of encoding any RDP of $L_{d,n}$ is by a binary tree where each nonleaf node is labeled by an integer in $[d].$ This labeling corresponds to the choice of the coordinate that was used for the split. 


We can now consider a constrained version of $\hat{\theta}^{(r)}_{\all,\lambda}$ which only optimizes over $\mathcal{P}_{\rdp,d,n}$ instead of optimizing over $\mathcal{P}_{\all,d,n}.$ Let us define $\hat{\theta}^{(r)}_{\rdp,\lambda} = O_{S^{(r)}(\hat{\Pi}_{\rdp}(\lambda))} y$ where $\hat{\Pi}_{\rdp}(\lambda)$ is a data dependent partition defined as 
\begin{equation*}
	\hat{\Pi}_{\rdp}(\lambda) = \argmin_{\Pi: \Pi \in \mathcal{P}_{\rdp,d,n}} \big(\|y - O_{S^{(r)}(\Pi)}\|^2 + \lambda  |\Pi|\big). 
\end{equation*}


The estimator $\hat{\theta}^{(r)}_{\rdp,\lambda}$ is precisely the Dyadic CART estimator which was proposed in~\cite{donoho1997cart} in the case when $d = 2$ and $r = 0.$ The author studied this estimator for estimating anisotropic smooth functions of two variables which exhibit different degrees of smoothness in the two variables. However, to the best of our knowledge, the risk properties of the Dyadic CART estimator (for $r = 0$) has not been examined in the context of estimating nonsmooth function classes such as piecewise constant and bounded variation functions. For $r \geq 1,$ the above estimator appears to not have been proposed and studied in the literature before. We call the estimator $\hat{\theta}^{(r)}_{\rdp,\lambda}$ as {\em Dyadic CART of order $r$}.




\subsubsection{Description of ORT of order $r \geq 0$}
For our purposes, we would need to consider a larger class of partitions than $\mathcal{P}_{\rdp,d,n}.$ To generate a RDP, for each rectangle we choose a dimension to split and then split at the midpoint. Instead of splitting at the midpoint, it is natural to allow the split to be at an arbitrary position. To that end, we define a \textit{hierarchical split} of the interval to be a split of the interval $[a,b]$ into two intervals, but not necessarily equal sized. To be concrete, the interval $[a,b]$ is split into the intervals $[a,\ell]$ and $[\ell + 1, b]$ for some $a \leq \ell \leq b.$ Now consider a generic rectangle $R = \prod_{i = 1}^{d} [a_i,b_i].$ A \textit{hierarchical split} of the rectangle $R$ involves the choice of a coordinate $1 \leq j \leq d$ to be split and then the $j$-th 
interval in the product defining the rectangle $R$ undergoes a hierarchical split. Thus, a hierarchical split of $R$ produces two sub rectangles $R_1$ and $R_2$ where $R_2 = R \cap R_1^{c}$ and $R_1$ is of the following form for some $1 \leq j \leq d$ and $a_j \leq \ell \leq b_j$,
\begin{equation*}
	R_1 = \prod_{i = 1}^{j - 1} [a_i,b_i] \times [a_j,\ell] \times \prod_{i = j + 1}^{d} [a_i,b_i].
\end{equation*}

Starting from the trivial partition $L_{d,n}$ itself, we can now generate partitions by splitting $L_{d,n}$ hierarchically. Again, in general if at some stage we obtain the partition $\Pi = (R_1,\dots,R_k)$, we can choose any of the rectangles $R_i$ and split it hierarchically to obtain $k + 1$ nonempty rectangles now. \textit{A hierarchical partition} is any partition reachable by such hierarchical splits. We denote the set of all hierarchical partitions of $L_{d,n}$ as $\mathcal{P}_{\hier, d, n}.$ Note that a hierarchical partition is in one to one correspondence with decision trees and thus, $\mathcal{P}_{\hier, d, n}$ can be thought of as the set of all decision trees.

Clearly, 
$$\mathcal{P}_{\rdp,d,n} \subset \mathcal{P}_{\hier,d,n} \subset \mathcal{P}_{\all, d, n}.$$ In fact, the inclusions are strict as shown in Figure~\ref{fig1}. In particular, there exist partitions which are not hierarchical. 
\begin{figure}\label{fig3}
	\begin{center}
		\includegraphics[scale=0.5]{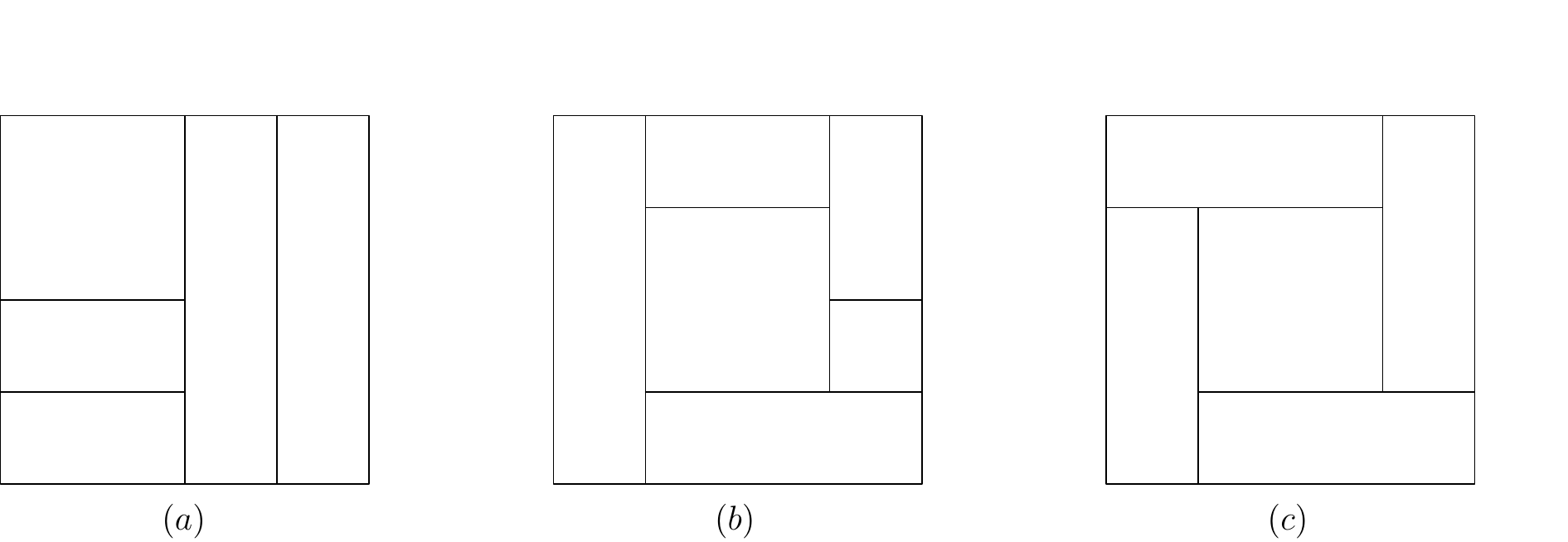} 
	\end{center}
	\caption{\textit{Figure~(a) is an example of a recursive dyadic partition of the square. Figure~(b) is nondyadic but is a hierarchical partition. Figure~(c) is an example of a nonhierarchical partition. An easy way to see this is that there is no split from top to bottom or left to right.}} 
	\label{fig1}
\end{figure}


We can now consider another constrained version of $\hat{\theta}^{(r)}_{\all,\lambda}$ which optimizes only over $\mathcal{P}_{\hier,d,n}.$ Let us define $\hat{\theta}^{(r)}_{\hier, \lambda} = O_{S^{(r)}(\hat{\Pi}_{\hier}(\lambda))} y$ where $\hat{\Pi}_{\hier}(\lambda)$ is a data dependent partition defined as 
\begin{equation*}
	\hat{\Pi}_{\hier}(\lambda) = \argmin_{\Pi: \Pi \in \mathcal{P}_{\hier,d,n}} \big(\|y - O_{S^{(r)}(\Pi)} y\|^2 + \lambda |\Pi|\big). 
\end{equation*}


Although this is a natural extension of Dyadic CART, we are unable to pinpoint an exact reference where this estimator has been explicitly proposed or studied in the statistics literature. The above optimization problem is an analog of the optimal decision tree problem laid out in~\cite{bertsimas2017optimal}. The difference is that~\cite{bertsimas2017optimal} is considering classification whereas we are considering fixed lattice design regression. Note that the above optimization problem is different from the usual pruning of a tree that is done at the second stage of CART. Pruning can only result in subtrees of the full tree obtained in the first stage whereas the above optimization is over all rectangular partitions $ \Pi \in \mathcal{P}_{\hier,d,n}.$ We name the estimator $\hat{\theta}^{(r)}_{\lambda, \hier}$ as \textit{Optimal Regression Tree} (ORT) of order $r$.



\subsection{Both Dyadic CART and ORT of all orders are efficiently computable}
The crucial fact about $\hat{\theta}^{(r)}_{\rdp,\lambda}$ and 
$\hat{\theta}^{(r)}_{\hier,\lambda}$ is that they can be computed 
efficiently and exactly using dynamic 
programming approaches. A dynamic program algorithm to 
compute $\hat{\Pi}_{\rdp,\lambda}$ for $d = 2$ and $r = 0$ was shown 
in~\cite{donoho1997cart}. This algorithm is extremely fast 
and can be computed in $O(N)$ (linear in sample size) time. 
The basic idea there can actually be extended to compute both Dyadic CART and ORT for any fixed $r,d$ with computational complexity given in the next lemma. The proof is given in Section~\ref{Sec:proofs} (in the supplementary file,). 

\begin{lemma}\label{lem:compu}
	There exists an absolute constant $C > 0$ such that the computational complexity, i.e. the number of elementary operations involved in the computation of ORT is bounded by:
	\begin{equation*}
		\begin{cases}
			CN^2\:nd, & \text{for} \:\:r = 0 \\
			CN^2\:(nd + d^{3r}) & \text{for} \:\:r \geq 1
		\end{cases}
	\end{equation*}
	Similarly, the computational complexity of Dyadic CART is bounded by:
	\begin{equation*}
		\begin{cases}
			C2^d\:Nd, & \text{for} \:\:r = 0 \\
			C2^d\:N\:d^{3r}& \text{for} \:\:r \geq 1.
		\end{cases}
	\end{equation*}
\end{lemma}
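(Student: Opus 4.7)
The approach is standard dynamic programming on rectangles, exploiting the fact that a hierarchical partition of a rectangle $R$ either consists of $R$ itself or is obtained by first splitting $R$ hierarchically into $R_1,R_2$ and then partitioning each side hierarchically; the analogous statement holds verbatim for dyadic partitions with dyadic splits. Accordingly, for each rectangle $R$ in the relevant class I would define
\[
V(R) \;\coloneqq\; \min_{\Pi \in \mathcal{P}_\star(R)}\Bigl(\|y_R - O_{S^{(r)}(\Pi)}y_R\|^2 + \lambda|\Pi|\Bigr),
\]
where $\star\in\{\hier,\rdp\}$ and $\mathcal{P}_\star(R)$ denotes the corresponding partitions of $R$. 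Writing $E(R)\coloneqq\|y_R - O_{S^{(r)}(R)}y_R\|^2$ for the residual from a single polynomial fit on $R$, the recursion is $V(R)=\min\{E(R)+\lambda,\;\min_{(R_1,R_2)}V(R_1)+V(R_2)\}$, where the inner min ranges over all admissible splits. Evaluating the recursion in order of increasing side lengths (or depth) guarantees that subproblem values are available when needed, and the final answer is $V(L_{d,n})$. The optimal partition itself is recovered by storing the minimizing choice at each $R$.

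The bookkeeping then splits into three counts. First, how many rectangles $R$ arise. For ORT every subrectangle $\prod_i[a_i,b_i]\subset L_{d,n}$ is admissible, giving $\binom{n+1}{2}^d = O(N^2)$ rectangles; for Dyadic CART only products of dyadic intervals appear, and since there are $2n-1$ dyadic intervals per coordinate the count is $(2n-1)^d=O(2^d N)$. Second, how many splits must be tried per rectangle: $O(nd)$ for hierarchical (a coordinate $j\in[d]$ and a cut position $\ell$), and $O(d)$ for dyadic (just the coordinate). Third, the per-rectangle cost of computing $E(R)$.

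For $r=0$, $E(R)$ is $\|y_R\|^2$ minus $|R|^{-1}(\sum_{x\in R}y(x))^2$, and with a $d$-dimensional prefix-sum table of $y$ and $y^2$ precomputed in $O(N)$ time, each $E(R)$ is obtained in $O(1)$ operations. For $r\geq 1$, I would precompute $d$-dimensional prefix sums of all monomials $\prod_i x_i^{s_i}$ with $\sum_i s_i \leq 2r$ (there are $O(d^{2r})$ such monomials), as well as prefix sums of $y\cdot\prod_i x_i^{s_i}$ for $\sum_i s_i\leq r$. From these, the Gram matrix $X^\top X$ (size $K_{r,d}\times K_{r,d}$ with $K_{r,d}=O(d^r)$) and the vector $X^\top y_R$ for the polynomial regression on $R$ can each be assembled in $O(K_{r,d}^2)=O(d^{2r})$ time, after which solving the normal equations and evaluating $E(R)$ costs $O(K_{r,d}^3)=O(d^{3r})$. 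Multiplying rectangles $\times$ (fit cost $+$ split enumeration) yields $O(N^2(nd+d^{3r}))$ for ORT and $O(2^d N\, d^{3r})$ for Dyadic CART, matching the claimed bounds (for $r=0$ the fit cost is $O(1)$, so the split enumeration dominates and gives $O(N^2 nd)$ and $O(2^d N d)$ respectively).

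The step that most needs care is the precomputation/assembly argument for $E(R)$: one must verify that the $X^\top X$ and $X^\top y$ entries for polynomial regression on a general product $R=\prod[a_i,b_i]$ are indeed $d$-variate prefix-sum queries against monomial tables, which follows from the multinomial expansion of $\prod_i(x_i-c_i)^{s_i}$ about any corner $c$ of $R$ (so that the contribution of $R$ is a telescoping difference of corner values of the prefix table). Everything else is routine counting; the only mild subtlety in the dyadic case is that a dyadic split of a rectangle whose side length is not a power of $2$ is defined via the ceiling in the excerpt, but the same product-of-dyadic-intervals enumeration still bounds the set of rectangles encountered by $(2n-1)^d$.
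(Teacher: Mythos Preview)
Your dynamic-programming recursion, the rectangle counts ($O(N^2)$ for ORT, $O(2^dN)$ for dyadic), and the split counts ($O(nd)$ and $O(d)$) all match the paper exactly. The one place where your argument differs from the paper, and where it has a small gap, is the computation of $E(R)$.

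You compute $E(R)$ via $d$-dimensional prefix-sum tables and claim each query is $O(1)$ (respectively, that assembling $X^\top X$ on $R$ costs $O(K_{r,d}^2)$). But a $d$-variate inclusion--exclusion query over the $2^d$ corners of $R$ costs $O(2^d)$, not $O(1)$; your own ``telescoping difference of corner values'' is precisely this $2^d$-term sum. With that correction your ORT bound for $r=0$ becomes $O(N^2(nd+2^d))$ and for $r\ge 1$ becomes $O(N^2(nd+2^d d^{2r}+d^{3r}))$, neither of which is dominated by the stated bounds for all $n,d$ with an absolute constant. The paper sidesteps this by computing the needed quantities \emph{additively} rather than via prefix sums: it stores $\mathrm{SUM}(R)$, $\mathrm{SUMSQ}(R)$ (and for $r\ge 1$ the Gram matrix $B_R^\top B_R$ and $B_R^\top y_R$) bottom-up, updating each by picking a single split $R=R_1\cup R_2$ and adding the two stored values. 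That update is genuinely $O(1)$ (respectively $O(K_{r,d}^2)$) per rectangle, with no hidden $2^d$. Swapping your prefix-sum step for this additive scheme closes the gap; everything else in your proposal is correct and essentially identical to the paper's proof.
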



\begin{remark}
	{Since the proxy for the sample size $N \geq 2^d$ as soon as $n \geq 2$, it does not make sense to think of $d$ as large when reading the above computational complexity. The lattice design setting is really meaningful when $d$ is low to moderate and fixed and the number of samples per dimension $n$ is growing to $\infty$. Thus, one should look at the dependence of the computational complexity on $N$ and treat the factors depending on $d$ as constant}.
\end{remark}

\begin{remark}
	Even for $d = 1$, the brute force computation time is exponential in $N$ as the total number of hierarchical partitions is exponential in $N.$
\end{remark}

The rest of the paper is organized as follows. 
In Section~\ref{Sec:results} we state our oracle risk bound for 
Dyadic CART and ORT of all orders. Section~\ref{secmpp} describes 
applications of the oracle risk bound for ORT to multivariate 
piecewise polynomials. In Sections~\ref{sec:tv} and~\ref{Sec:uni} 
we state applications of the oracle risk bound for Dyadic CART to 
multivariate bounded variation functions in general dimensions and 
univariate bounded variation function classes of all orders 
respectively. In Section~\ref{sec:sim} we describe our simulation studies and in Section~\ref{Sec:discuss}, we summarize our 
results, reiterate the main contributions of this 
paper and discuss some matters related to our work here. Section~\ref{Sec:proofs} contains all the proofs of our results. In Section~\ref{Sec:appendix} we state and prove some auxiliary results that we use for proving our main results in the paper.

\textbf{Acknowledgements}: This research was supported by a NSF grant and an IDEX grant from Paris-Saclay. S.G.’s research was carried out in part as a member of the
Infosys-Chandrasekharan virtual center for Random Geometry, supported by a grant from the
Infosys Foundation. We thank the anonymous referees for their numerous helpful remarks and 
suggestions on an earlier manuscript of the paper. We also thank Adityanand 
Guntuboyina for many helpful comments. The project started when SG was a postdoctoral fellow 
at the Institut des Hautes \'{E}tudes Scientifiques (IHES).

\section{Oracle risk bounds for Dyadic CART and ORT}\label{Sec:results}
In this section we describe an oracle risk bound. We have to set up some notations and terminology first. Let $\mathcal{S}$ be any 
finite collection of subspaces of $\R^N.$ Recall that for a generic subspace $S\in \mathcal{S}$, we denote its dimension by $Dim(S).$  For any given $\theta \in \R^N$ let us define 
\begin{equation}\label{eq:compl}
k_{\mathcal{S}}(\theta) = \min\{Dim(S): S \in \mathcal{S}, \theta \in S\}
\end{equation}
where we adopt the convention that the infimum of an empty set is $\infty$.

For any $\theta \in \R^N,$ the number $k_{\mathcal{S}}(\theta)$ can be thought of as describing the complexity of $\theta$ with respect to the collection of subspaces $\mathcal{S}.$ Recall the definition of the nested classes of rectangular partitions $\mathcal{P}_{\rdp,d,n} \subset \mathcal{P}_{\hier,d,n} \subset \mathcal{P}_{\all,d,n}.$ Also recall that the subspace $S^{(r)}(\Pi)$ 
denotes all arrays which are degree $r$ polynomial on each of the rectangles constituting $\Pi.$ For any integer $r \geq 0$, these classes of partitions induce their respective collection of subspaces of $\R^N$ defined as follows:
\begin{equation*}
	\mathcal{S}^{(r)}_{a} = \{S^{(r)}(\Pi): \Pi \in \mathcal{P}_{a, d, n}\}
\end{equation*}
where $a \in \{\rdp,\hier,\all\}$. 
For any $\theta \in \R^{L_{d, n}}$ and any integer $r \geq 0$ let us observe its complexity with respect to the collection of subspaces $S^{(r)}_{a}$ is 
\begin{equation*}
	k_{\mathcal{S}^{(r)}_{a}}(\theta) = k^{(r)}_{a}(\theta)
\end{equation*}
where again $a \in \{\rdp,\hier,\all\}.$ Here $k^{(r)}_{\all}(\theta^*)$ is the same as $k^{(r)}(\theta^*)$ defined earlier and we use both notations interchangeably.

It is now clear that for any $\theta \in \R^N$ we have
\begin{equation}\label{eq:chain}
k_{\all}^{(r)}(\theta) \leq k_{\hier}^{(r)}(\theta) \leq k_{\rdp}^{(r)}(\theta).
\end{equation}


We are now ready to 
an oracle risk bound for all the three estimators $\hat{\theta}^{(r)}_{\all,\lambda},\hat{\theta}^{(r)}_{\rdp,\lambda}$ and $\hat{\theta}^{(r)}_{\hier,\lambda}.$ The theorem is proved in Section~\ref{Sec:proofs}.
\begin{theorem}\label{thm:adapt}
	Fix any integer $r \geq 0$ and recall that $K_{r,d} = {r + 
		d - 1 \choose d - 1}$ was defined earlier. There exists an absolute constant $C > 0$ such that for any $0 < \delta < 1$ if we set $\lambda \geq C K_{r,d} \frac{\sigma^2 \log N}{\delta}$, then we have the following risk bounds for $a \in \{\rdp,\hier,\all\}$,
	\begin{equation*}
		\E \|\hat{\theta}^{(r)}_{a,\lambda} - \theta^*\|^2 \leq \inf_{\theta \in \R^N} \big[\frac{(1 + \delta)}{(1 - \delta)}\:\|\theta - \theta^*\|^2 + \frac{\lambda}{1 - \delta}\:k^{(r)}_{a}(\theta)\big] + C \frac{\sigma^2}{\delta\:(1 - \delta)}
	\end{equation*}
\end{theorem}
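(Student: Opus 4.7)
\textbf{Proof plan for Theorem~\ref{thm:adapt}.}

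The plan is the standard penalized least squares / model selection argument, carried out simultaneously for the three collections $a \in \{\rdp,\hier,\all\}$. Fix any candidate $\theta \in \R^N$, let $\Pi^{*}$ be a partition in $\mathcal{P}_{a,d,n}$ realising $k^{(r)}_a(\theta)$, and write $S^{*}=S^{(r)}(\Pi^{*})$, $\hat S = S^{(r)}(\hat\Pi_a(\lambda))$, $\hat\theta = O_{\hat S}y$. Since $\theta \in S^{*}$, the optimality of $\hat\Pi_a(\lambda)$ together with $\|y-O_{S^{*}}y\|^2 \le \|y-\theta\|^2$ gives the basic inequality
\[
\|y-\hat\theta\|^2 + \lambda|\hat\Pi| \;\le\; \|y-\theta\|^2 + \lambda k^{(r)}_a(\theta).
\]
Writing $y=\theta^{*}+\epsilon$ with $\epsilon=\sigma Z$ and cancelling $\|\epsilon\|^2$ from both sides I will obtain
\[
\|\hat\theta-\theta^{*}\|^2 \;\le\; \|\theta-\theta^{*}\|^2 + 2\langle \epsilon,\hat\theta-\theta\rangle + \lambda\bigl(k^{(r)}_a(\theta)-|\hat\Pi|\bigr).
\]

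Next I would handle the noise cross term. The key observation is that $\hat\theta-\theta \in V := \hat S + S^{*}$, a (random) subspace of dimension at most $K_{r,d}\bigl(|\hat\Pi|+k^{(r)}_a(\theta)\bigr)$, so $\langle \epsilon,\hat\theta-\theta\rangle = \langle O_{V}\epsilon,\hat\theta-\theta\rangle$. Applying weighted AM--GM $2ab \le \delta a^2 + \delta^{-1}b^2$ together with the elementary inequality $\|\hat\theta-\theta\|^2 \le (1+\delta)\|\hat\theta-\theta^{*}\|^2 + (1+\delta^{-1})\|\theta-\theta^{*}\|^2$, absorbing the $\|\hat\theta-\theta^{*}\|^2$ term on the left and adjusting $\delta$ by a harmless constant factor, yields
\[
(1-\delta)\|\hat\theta-\theta^{*}\|^2 \;\le\; (1+\delta)\|\theta-\theta^{*}\|^2 + \lambda k^{(r)}_a(\theta) - \lambda|\hat\Pi| + \tfrac{C}{\delta}\,\|O_{V}\epsilon\|^2.
\]

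The remaining task, and the main obstacle, is a uniform bound on $\|O_{S}\epsilon\|^2$ over the entire (data-dependent) collection $\mathcal{S}^{(r)}_a$. For each $S$ in that collection, $\sigma^{-2}\|O_{S}\epsilon\|^2 \sim \chi^{2}_{\dim S}$, so the Laurent--Massart tail bound gives $\|O_S\epsilon\|^2 \le \sigma^2\dim S + 2\sigma^2\sqrt{\dim S\cdot t_S} + 2\sigma^2 t_S$ with probability $1-e^{-t_S}$. The plan is a weighted union bound with weights $w_\Pi$ summing to at most one; for this I will count partitions by $|\Pi|=k$. In the hardest case $a=\all$, every rectangle of $L_{d,n}$ is encoded by its $2d$ corner coordinates, giving at most $n^{2d}=N^{2}$ rectangles and hence at most $N^{2k}$ partitions with $k$ cells; for $a=\hier$ the analogous count is $(Cdn)^k$, and for $a=\rdp$ only $(Cd)^k$. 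In every case $-\log w_\Pi \lesssim k\log N = (\dim S/K_{r,d})\log N$, so taking $t_S = -\log w_S + u$ and a union bound produces, simultaneously in $S$,
\[
\|O_S\epsilon\|^2 \;\le\; C\sigma^2 K_{r,d}(\dim S / K_{r,d})\log N + C\sigma^2 u
\]
with probability $1-e^{-u}$. Substituting $S=V$ and using $\dim V \le K_{r,d}(|\hat\Pi|+k^{(r)}_a(\theta))$, the choice $\lambda \ge CK_{r,d}\sigma^{2}\log N/\delta$ makes the random contribution $\tfrac{C}{\delta}\|O_V\epsilon\|^2$ no larger than $\lambda|\hat\Pi|+\lambda k^{(r)}_a(\theta) + C\sigma^{2}u/\delta$, so the negative $-\lambda|\hat\Pi|$ term absorbs the data-dependent part exactly. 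Taking expectation (integrating the $u$-tail contributes the additive $C\sigma^{2}/(\delta(1-\delta))$ piece), dividing by $1-\delta$, and finally taking the infimum over $\theta \in \R^{N}$ gives the claimed oracle inequality for all three choices of $a$ at once. The delicate point throughout is the counting step for $\mathcal{P}_{\all,d,n}$, because nonhierarchical partitions lack tree structure; I expect the crude $N^{2k}$ bound above to suffice, but verifying that this bound is in fact tight enough to match the advertised $\log N$ factor in $\lambda$ is what requires the most care.
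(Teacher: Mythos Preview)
Your proposal is correct and follows essentially the same route as the paper: basic inequality from optimality, AM--GM on the cross term, and a union bound over subspaces using the crude count $|\{\Pi:|\Pi|=k\}|\le N^{2k}$ (the paper verifies exactly this bound, so your ``delicate point'' is not an obstacle). The one minor technical difference is in how the random subspace issue is handled: the paper keeps the normalized inner product $\langle Z,(\hat\theta-\theta)/\|\hat\theta-\theta\|\rangle$, takes a supremum over $v\in\hat S$, and uses a small lemma showing this supremum behaves like $\sqrt{\dim\hat S}+1$ even though $\theta\notin\hat S$; you instead project onto $V=\hat S+S^{*}$ and use a $\chi^2$ tail bound, at the cost of a $K_{r,d}k^{(r)}_a(\theta)$ term in $\dim V$ which you correctly absorb into $\lambda k^{(r)}_a(\theta)$. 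Both variants are standard and yield the same conclusion with the same $\lambda$ threshold.
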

Let us now discuss about some aspects of the above theorem.
\begin{itemize}
	\item Theorem~\ref{thm:adapt} gives an oracle risk bound for all the three estimators: the fully penalized LSE $\hat{\theta}^{(r)}_{\all,\lambda}$, the Dyadic CART estimator $\hat{\theta}^{(r)}_{\rdp,\lambda}$ and the ORT estimator $\hat{\theta}^{(r)}_{\hier,\lambda}$ of all orders $r \geq 0$ in all dimensions $d.$ An upper bound on the risks of these estimators is given by an oracle risk bound trading off squared error distance and complexity. Such a result already appeared in~\cite[Theorem~$7.2$]{donoho1997cart} for the Dyadic CART estimator in the special case when $r = 0$ and $d = 2$ with an additional multiplicative $\log N$ factor in front of the squared 
	distance term. Such an oracle risk bound also appeared in~\cite{nowak2004estimating} (see equation $8$) where an upper bound to the MSE in terms of approximation error plus estimation error is given. The main points we want to make by proving the above theorem are that firstly, it continues to hold for Dyadic CART and 
	ORT of all orders and in all dimensions. Secondly, this oracle inequality potentially implies tight bounds for several classes of functions of recent interest. 
	
	\smallskip
	
	\item Oracle risk bounds for Dyadic CART type estimators have also been shown for classification problems; see, e.g.~\cite{blanchard2007optimal} and~\cite{scott2006minimax}. 
	
	\smallskip
	
	\item Due to the inequality in~\eqref{eq:chain}, the risk bounds are ordered pointwise in $\theta^*.$ The risk bound is best for the 
	fully penalized LSE followed by ORT and then followed by Dyadic CART. 
	However, the Dyadic CART is the cheapest to compute followed by ORT in terms of the number of basic operations needed for computation. 
	The computation of the fully penalized LSE, of course, is out of the question. Thus, our risk bounds hint at a natural trade-off 
	between statistical risk computational complexity. 
	
	\smallskip


	\item 
	Gaussian nature of the noise $Z$ is not essential in the above theorem. The conclusion of the theorem holds as long as the entries of $Z$ are independent and are sub gaussian random variables. For the sake of clean exposition, we prove the theorem and its applications in the next section, for the Gaussian case only. In fact, the assumption of independence of Gaussian noise can also be relaxed as discussed in Section~\ref{sec:dep}.
	
	\smallskip
	
	\item The proof (done in Section~\ref{Sec:proofs}) of the above theorem uses relatively standard techniques from high dimensional statistics and is similar to the usual proof of the $\ell_0$ penalized BIC estimator achieving fast rates in high dimensional linear regression 
	(see, e.g, Theorem $3.14$ in~\cite{rigollet2015high}). As is probably well known, sharp 
	oracle inequality with $\frac{1 + \delta}{1 - \delta}$ replaced by $1$ seems unreachable with 
	the current proof technique. Just to reiterate, the point of proving this theorem is to recognize that such a result would imply near optimal results for function classes that are of interest in this current manuscript.  

	\smallskip
	
	\item Operationally, to derive risk bounds for Dyadic CART or ORT for some function class, Theorem~\ref{thm:adapt} behooves us to use approximation theoretic arguments. Precisely, for a given generic $\theta^*$ in the function class, one needs to understand what is the approximation error in the Euclidean sense, if the approximator $\theta$ is constrained to satisfy $k_{\rdp}(\theta) = k$ or $k_{\hier}(\theta) = k$ for any given integer $k.$ One of the technical contributions of this paper lie in addressing this approximation theoretic question for the three classes of functions considered in this paper. 
	
	\smallskip
	
	\item The risk bounds in Theorem~\ref{thm:adapt} as well as the algorithms for computing our estimators (see Section~\ref{sec:algos} in the supplementary file) can be adapted for any subspace, not only the subspace of degree $r$ polynomials. As long as we consider partitions consisting of axis aligned rectangles, we can choose any subspace of functions to be fitted within each rectangle of the partition. Polynomials are one of the most natural and classical subspace of functions which is why we wrote our results for polynomials. 
	
\end{itemize}

The univariate ORT estimator was rigorously studied in~\cite{boysen2009consistencies} where it is called the \textit{Potts minimizer}. This is because the objective function defining the estimator arises in the Potts model in statistical physics; see~\cite{wu1982potts}. Furthermore, this estimator can be computed in $O(n^3)$ time by dynamic programming as shown in~\cite{winkler2002smoothers}. 
This estimator can be thought of as a $\ell_0$ penalized least squares estimator as it penalizes $k^{(0)}(\theta)$ which is same as the $\ell_0$ norm of the difference vector $D \theta = (\theta_{2} - \theta_{1},\dots,\theta_{n} - \theta_{n - 1}).$ It is known that this estimator, properly tuned, indeed nearly attains the minimax rate risk (for instance, this will be implied by Theorem~\ref{thm:adapt}).

The other approach is to consider the corresponding $\ell_1$ penalized estimator,
\begin{equation*}
	\hat{\theta}_{\ell_1,\lambda} = \argmin_{\theta \in \R^{L_{n,1}}}\big( \|y - \theta\|^2 + \lambda \|D \theta\|_1\big).
\end{equation*}
The above estimator is known as univariate Total Variation Denoising (TVD) estimator and 
sometimes also as Fused Lasso in the literature. This estimator is efficiently computable as this is a convex optimization problem. Recent results 
in~\cite{guntuboyina2020adaptive},~\cite{dalalyan2017tvd} and \cite{ortelli2018} have shown that, when properly tuned, the above estimator is also capable of attaining the minimax rate risk under some minimum length assumptions on $\theta^*$ (see Section~\ref{Sec:compare} for details).

To generalize the second approach to the multivariate setting, it is perhaps natural to consider the multivariate Total Variation Denoising (TVD) estimator (see~\cite{hutter2016optimal},~\cite{sadhanala2016total}). However, in a previous manuscript 
of the authors, it has been shown that there exists a $\theta^* \in L_{2,n}$ with $k^{(0)}(\theta^*) = 2$ such that the risk of the ideally tuned {\em constrained} TVD estimator is lower 
bounded by $C N^{-3/4}$, see Theorem $2.3$ in~\cite{chatterjee2019new}. Thus, even for $d = 
2,$ the TVD estimator \textit{cannot} attain the $\tilde{O}(\frac{k(\theta)}{N})$ rate of 
convergence in general. This fact makes us forego the $\ell_1$ penalized approach and return 
to $\ell_0$ penalized least squares. 


Coming to the general $r \geq 1$ case, the literature on fitting piecewise polynomial 
functions is diverse. Methods based on local polynomials and spline functions abound in the 
literature. However, in general dimensions, we are not aware of any rigorous results of the 
precise nature we desire. In the univariate case, there is a family of computationally efficient estimators which fits piecewise polynomials and attain our goal of nearly achieving the oracle risk as stated 
before. This family of estimators is known as {\em trend filtering}--- first proposed in~\cite{kim2009ell_1} and its statistical properties analyzed in~\cite{tibshirani2014adaptive}. Trend filtering is a higher order generalization of the univariate TVD estimator which penalizes the $\ell_1$ norm of higher order derivatives. A continuous version of these estimators, where discrete derivatives are replaced by continuous derivatives, was proposed much earlier in the statistics literature 
by~\cite{mammen1997locally} under the name {\em locally adaptive regression splines}. The 
desired risk adaptation property (of any order $r$) was established 
in~\cite{guntuboyina2020adaptive}, where it was shown that trend filtering (of order $r$) attains the minimax rate risk whenever the underlying truth 
$\theta^*$ is a \textit{discrete spline} and it satisfies some \textit{minimum length} assumptions. See 
Section~\ref{Sec:compare} for a more detailed discussion. However, to the best of our knowledge, such 
bounds are not available in dimension $2$ and above. To summarize this section we can say that beyond the univariate case, our question does not appear to have been answered.  Our goal here is to start filling this gap in the literature.


\begin{remark}
	\label{remark:scope}
	Although the fixed lattice design setting is commonly studied, recall that in this setting the sample size $N \geq 2 ^d$ whenever $n \geq 2.$ In other words, $N$ is necessarily growing exponentially with $d.$ Thus, the results in this paper are really meaningful in the asymptotic regime where $d$ is some fixed ambient dimension and $n \rightarrow \infty.$ Practically speaking, our algorithms and the statistical risk bounds we will present in this manuscript are meaningful for low to moderate dimensions $d.$ Even when $d = 2$, the question we posed about attaining the minimax rate risk adaptively for all truths in a computationally feasible way seems a nontrivial problem.  
\end{remark}

\subsection{Our Results for ORT}\label{secmpp2}
Recall that $K_{r,d}$ is the dimension of the subspace of $d$ dimensional polynomials with degree at most $r \geq 0.$ An immediate corollary of Theorem $2.1$ is the following.
\begin{corollary}\label{cor:pc}
	There exists an absolute constant $C > 0$ such that by setting $\lambda = C K_{r,d} \:\sigma^2\:\log N$ we have the following risk bound, 
	\begin{equation*}
		MSE(\hat{\theta}^{(r)}_{\hier,\lambda},\theta^*) \leq \frac{C\:K_{r,d}\:\:\sigma^2\:\log N}{N}\:k^{(r)}_{\hier}(\theta^*) + \frac{C\:\sigma^2}{N}.
	\end{equation*}
\end{corollary}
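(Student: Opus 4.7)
The plan is to derive this as an essentially one-line consequence of Theorem~\ref{thm:adapt} applied with $a = \hier$. First I would fix the free parameter $\delta$ in Theorem~\ref{thm:adapt} to a convenient absolute constant, say $\delta = 1/2$. Then the hypothesis of the theorem becomes $\lambda \geq C' K_{r,d} \sigma^2 \log N$ for some absolute constant $C'$, which matches the choice of $\lambda$ in the corollary (up to renaming the constant), and the factors $\tfrac{1+\delta}{1-\delta}$, $\tfrac{1}{1-\delta}$, $\tfrac{1}{\delta(1-\delta)}$ all become absolute constants that can be absorbed into a single $C$.

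The second step is to bound the infimum over $\theta \in \R^N$ by simply plugging in the trivial choice $\theta = \theta^*$. This makes the approximation term $\|\theta - \theta^*\|^2$ vanish, leaves the complexity term equal to $\lambda\, k^{(r)}_{\hier}(\theta^*)$ (since $k^{(r)}_{\hier}(\theta^*) = k^{(r)}_{\hier}(\theta^*)$), and yields
\[
\E\|\hat{\theta}^{(r)}_{\hier,\lambda} - \theta^*\|^2 \;\leq\; C\, K_{r,d}\, \sigma^2 \log N \cdot k^{(r)}_{\hier}(\theta^*) \;+\; C\,\sigma^2.
\]
Finally, dividing both sides by $N$ and using the definition of MSE gives the stated bound.

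There is no real obstacle here beyond bookkeeping of constants: the whole content is in Theorem~\ref{thm:adapt}, and the corollary just specializes $\delta$ and picks the trivial test point $\theta^*$ in the infimum (which is optimal up to constants, since one could not hope for better than the complexity term alone once the approximation error is killed). One minor remark worth flagging in the writeup is that taking $\theta = \theta^*$ is crude in general — any $\theta$ with small approximation error and much smaller $k^{(r)}_{\hier}(\theta)$ would also give a valid bound, and this is what will drive the applications to bounded variation classes later — but for the present statement of the corollary, the choice $\theta = \theta^*$ is exactly what is needed.
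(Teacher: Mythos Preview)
Your proposal is correct and matches the paper's approach exactly: the paper states this as an ``immediate corollary'' of Theorem~\ref{thm:adapt} without further detail, and what you have spelled out (fix $\delta$ to an absolute constant, take $\theta = \theta^*$ in the infimum, divide by $N$) is precisely the intended derivation.
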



Let us discuss some implications of the above corollary. For ORT of order $r \geq 0$, a risk bound scaling like $O(\frac{k^{(r)}_{\hier}(\theta^*)}{N} \log N)$ is guaranteed 
for all $\theta^*$. Thus, for instance, if the true $\theta^*$ is piecewise constant/linear on some arbitrary unknown hierarchical 
partition of $L_{d, n}$, the corresponding ORT estimator of order $0, 1$ respectively achieves the (near) minimax risk $
O(\frac{k^{(r)}_{\all}(\theta^*)}{N} \log N)$. Although this result is an immediate implication of Theorem~\ref{thm:adapt}, this is the first 
such risk guarantee established for a computationally efficient decision tree estimator in 
general dimensions as far as we are aware of. 

At this point, let us recall that our target is to achieve the ideal upper bound $\tilde O(\frac{k^{(r)}_{\all}(\theta^*)}{N})$ to the MSE for all $\theta^*$ which is attained by the fully penalized LSE. However, it is perhaps not efficiently computable. The best upper bound to the MSE we can get for a computationally efficient estimator is 
$\tilde O(\frac{k^{(r)}_{\hier}(\theta^*)}{N})$ which is attained by the ORT estimator.

A natural question that arises at this point is how much worse is the upper bound for ORT than the upper bound for the fully penalized LS estimator given in Theorem~\ref{thm:adapt}. Equivalently, we know that $k_{\all}^{(r)}(\theta^*) \leq 
k_{\hier}^{(r)}(\theta^*)$ 
in general, but how large can the gap be? There definitely exist partitions which are 
not hierarchical, 
i.e. that is 
$\mathcal{P}_{\hier, d, n}$ is a strict subset of $\mathcal{P}_{\all,d, n}$ as shown in Figure~1.

In the next section we explore general and possibly nonhierarchical partitions of $L_{d,n}$ and state several results which basically imply that ORT incurs MSE at most a constant factor more than the ideal fully penalized LSE for several natural instances of rectangular partitions.

\subsubsection{Arbitrary partitions}\label{sec:non}
The risk bound for ORT in Theorem~\ref{thm:adapt} is in terms of $k_{\hier}(\theta^*).$ We 
would like to convert it into a risk bound involving $k_{\all}(\theta^*).$ A natural way of doing this would be to refine an arbitrary partition into a hierarchical partition and 
then count the number of extra rectangular pieces that arises as a result of this 
refinement. This begs the following question of a combinatorial flavour.

\textit{Can an arbitrary partition of $L_{d,n}$ be refined into a hierarchical partition without increasing the number of rectangles too much?}. 

Fortunately, the above question has been studied a fair bit in the computational/combinatorial geometry literature 
under the name of \textit{binary space partitions}. A binary space 
partition (BSP) is a recursive partitioning scheme for a set of 
objects in space. The goal is to partition the space recursively 
until each smaller space contains only one/few of the original 
objects. The main questions of interest are, given the set of 
objects, the minimal cardinality of the optimal partition 
and an efficient algorithm to compute it. A nice survey of this area, explaining the 
central questions and an overview of known results can be found in~\cite{toth2005binary}. 
We will now leverage some existing results in this area which would yield corresponding risk bounds with the help of Theorem~\ref{thm:adapt}. 

For $d = 2,$ it turns out that any rectangular partition can be refined into a hierarchical one where the number of rectangular pieces at most doubles. The following proposition is due to~\cite{berman2002exact} and states this fact. 

\begin{proposition}[\cite{berman2002exact}]\label{prop:partapp}
	Given any partition $\Pi \in \mathcal{P}_{\all,2, n}$ there exists a refinement $\tilde{\Pi} \in \mathcal{P}_{\hier,2, n}$ such that $|\tilde{\Pi}| \leq 2 |\Pi|.$ As a consequence, for any matrix $\theta \in \R^{n \times n}$ and any nonnegative integer $r$, we have $$k^{(r)}_{\hier}(\theta) \leq 2 k^{(r)}_{\all}(\theta).$$
\end{proposition}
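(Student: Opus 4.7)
The plan is to prove this by strong induction on $k = |\Pi|$, reducing the problem to a purely combinatorial lemma about the existence of an almost-guillotine cut. Concretely, I would first formulate the following key lemma: in any rectangular partition $\Pi$ of a rectangle in the plane with $|\Pi| \geq 2$, there exists an axis-aligned line $\ell$ spanning the entire bounding rectangle that crosses (in the interior) at most one constituent rectangle of $\Pi$. Once this lemma is available, the proposition follows by a clean induction, as sketched below.

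To prove the key lemma, I would examine the set of maximal interior edges of $\Pi$ (maximal horizontal or vertical straight segments contained in the union of the rectangle boundaries). Each such segment has two endpoints, and each endpoint is either on the boundary of the bounding square or is a T-junction. A segment that reaches the boundary on both ends is already a guillotine cut, so we may assume no such segment exists. In that regime, every maximal interior segment has at least one T-junction endpoint. Using a double-counting/Euler-formula argument on the planar subdivision induced by $\Pi$ (comparing the number of T-junctions to the number of rectangles), I would show that there must exist a maximal segment that can be \emph{extended} in one direction by crossing the interior of at most one rectangle before hitting either the boundary or another collinear segment; concatenating these collinear pieces yields the desired line $\ell$ with stabbing number at most $1$. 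This step is where all the geometry is concentrated and is the main obstacle; the pinwheel configuration shows that stabbing number $0$ is not always achievable, so extracting the tight bound of $1$ requires care.

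With the lemma in hand, the induction is routine. Let $f(k)$ denote the largest possible size of a smallest hierarchical refinement of any rectangular partition with $k$ cells. For $|\Pi| = k \geq 2$, apply the lemma to find a cut $\ell$ splitting the bounding rectangle into two sub-rectangles $R_1, R_2$. The cut $\ell$ is itself a hierarchical split of the bounding rectangle, and it induces partitions $\Pi_1, \Pi_2$ of $R_1, R_2$ whose sizes satisfy $|\Pi_1| + |\Pi_2| \leq k + 1$ (the $+1$ charges the at-most-one rectangle bisected by $\ell$). By induction, each $\Pi_i$ admits a hierarchical refinement of size at most $f(|\Pi_i|)$, and concatenating these via the root cut $\ell$ produces a hierarchical refinement of $\Pi$ of size at most $f(|\Pi_1|) + f(|\Pi_2|)$. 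The recursion $f(k) \leq \max_{k_1 + k_2 \leq k+1} (f(k_1) + f(k_2))$ with $f(1) = 1$ is easily solved to yield $f(k) \leq 2k - 1 < 2k$, which is exactly the claimed bound $|\tilde{\Pi}| \leq 2|\Pi|$.

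Finally, the consequence for the complexity functionals is immediate: given any $\theta \in \R^{n \times n}$, let $\Pi$ be a minimal rectangular partition witnessing $k^{(r)}_{\all}(\theta)$, so that $\theta$ is a degree-$r$ polynomial on each cell of $\Pi$. Applying the first part of the proposition produces a hierarchical refinement $\tilde{\Pi}$ with $|\tilde{\Pi}| \leq 2|\Pi|$; since $\tilde{\Pi}$ refines $\Pi$, the array $\theta$ remains a degree-$r$ polynomial on each cell of $\tilde{\Pi}$, giving $k^{(r)}_{\hier}(\theta) \leq |\tilde{\Pi}| \leq 2 k^{(r)}_{\all}(\theta)$. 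The hard part, to reiterate, is the geometric lemma on stabbing-number-one cuts; the rest of the proof is a clean induction plus the definitional unpacking.
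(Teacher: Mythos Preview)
The paper does not supply its own proof of this proposition; it is quoted as a result of Berman, DasGupta and Muthukrishnan and used as a black box, so there is no in-paper argument to compare against.

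Your outline is essentially the standard argument and is structurally sound. Two small points worth tightening. First, to close the strong induction you need the cut to be \emph{productive}, i.e.\ $k_1, k_2 < k$; as written, your recursion allows $k_2 = k$ (this happens when $\ell$ bisects a rectangle $R$ and all other $k-1$ rectangles lie on one side), which makes the induction circular. The fix is easy: in that degenerate configuration the far edge of $R$ parallel to $\ell$ must span the full bounding rectangle, giving a true guillotine cut with $k_1 + k_2 = k$ and both $k_i < k$. Second, once $k_1, k_2 < k$ is enforced, your recursion $f(1)=1$, $f(k) \le \max\{f(k_1)+f(k_2): k_1+k_2 \le k+1,\ k_i < k\}$ actually yields $f(k) \le 2k-2$ for $k \ge 2$ (split on whether some $k_i = 1$ or both $k_i \ge 2$), slightly better than the $2k-1$ you state; either way $|\tilde\Pi| \le 2|\Pi|$ follows. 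The Euler-formula/double-counting route to the stabbing-number-one lemma is the right flavor and is where all the work lies; the pinwheel example you cite correctly shows that stabbing number $0$ is not always attainable, so the constant $2$ is essentially forced by this method.
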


The above proposition applied to Theorem~\ref{thm:adapt} immediately yields the following theorem:
\begin{theorem}
	Let $d = 2.$ There exists an absolute constant $C$ such that by setting $\lambda = C\:K_{r,d}\:\:\sigma^2\:\log N$ we have the following risk bound for $\hat{\theta}_{\hier,\lambda}$:
	\begin{equation*}
		\MSE(\hat{\theta}^{(r)}_{\hier,\lambda},\theta^*) \leq \frac{C\:K_{r,d}\:\:\sigma^2\:\log N}{N}\:k^{(r)}_{\all}(\theta^*) + \frac{C\:\sigma^2}{N}.
	\end{equation*}
\end{theorem}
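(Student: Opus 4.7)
The plan is to combine the oracle risk bound of Theorem~\ref{thm:adapt} with the combinatorial refinement result of Proposition~\ref{prop:partapp}. First, I would invoke Theorem~\ref{thm:adapt} with $a = \hier$, choosing $\delta = 1/2$ (any fixed constant in $(0,1)$ works), so that the hypothesis $\lambda \geq C K_{r,d} \sigma^2 \log N / \delta$ becomes precisely $\lambda = C' K_{r,d} \sigma^2 \log N$ for a possibly enlarged absolute constant $C'$. Specializing the infimum on the right-hand side to the choice $\theta = \theta^*$ (so that the approximation error $\|\theta - \theta^*\|^2$ vanishes) yields
\begin{equation*}
\E \|\hat{\theta}^{(r)}_{\hier,\lambda} - \theta^*\|^2 \;\leq\; 2\lambda \, k^{(r)}_{\hier}(\theta^*) + C\,\sigma^2.
\end{equation*}

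The second step is to translate the bound on $k^{(r)}_{\hier}(\theta^*)$ into one on $k^{(r)}_{\all}(\theta^*)$, which is where the hypothesis $d=2$ enters. Proposition~\ref{prop:partapp} supplies exactly what is needed: in the planar case any rectangular partition of $L_{2,n}$ can be refined to a hierarchical one with at most twice as many pieces, and consequently $k^{(r)}_{\hier}(\theta^*) \leq 2\, k^{(r)}_{\all}(\theta^*)$ for every $\theta^*$ and every nonnegative integer $r$. Plugging this into the display above and dividing both sides by $N$ gives the claimed MSE bound (with the constant $C$ absorbing the factor of $2$ and the various absolute constants from Theorem~\ref{thm:adapt} and Proposition~\ref{prop:partapp}).

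In short, the proof is essentially a two-line combination: Theorem~\ref{thm:adapt} at $\theta = \theta^*$ followed by Proposition~\ref{prop:partapp}. There is no real obstacle here, since both ingredients have already been proved (or cited); the only thing to be careful about is the bookkeeping of the constants and the choice of $\delta$ in the oracle inequality, neither of which is difficult. The conceptual content of the statement lies entirely in the nontrivial combinatorial fact supplied by Proposition~\ref{prop:partapp}, which is special to $d=2$ and fails in higher dimensions (as noted in the remark following the higher-dimensional analogue Proposition~\ref{prop:partapp2}).
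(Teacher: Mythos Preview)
Your proposal is correct and matches the paper's approach exactly: the paper states that the theorem follows immediately by applying Proposition~\ref{prop:partapp} to Theorem~\ref{thm:adapt} (equivalently, to Corollary~\ref{cor:pc}, which is just Theorem~\ref{thm:adapt} specialized at $\theta=\theta^*$). Your bookkeeping with $\delta$ and the constants is precisely what the paper leaves implicit.
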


\begin{remark}
	Thus, in the two dimensional setting $d = 2$, {\rm ORT} fulfills the two objectives of computability in polynomial time and attaining the minimax risk
	rate adaptively for all truths $\theta^*.$ Thus, this completely solves the main 
	question we posed in the two dimensional case. To the best of our knowledge, this is the first result of its kind in the literature. 
\end{remark}

For dimensions higher than $2$; 
the best result akin to Proposition~\ref{prop:partapp} that is available is due to~\cite{hershberger2005binary}.

\begin{proposition}[~\cite{hershberger2005binary}]\label{prop:partapp2}
	Let $d > 2.$ Given any partition $\Pi \in \mathcal{P}_{\all, d, n}$ there exists a refinement $\tilde{\Pi} \in \mathcal{P}_{\hier, d, 
		n}$ such that $|\tilde{\Pi}|  \leq |\Pi|^{\frac{d + 1}{3}}.$ As a consequence, for any array $\theta \in \R^{L_{d, n}}$ and any nonnegative integer $r$, we have 
	$$k^{(r)}_{\hier}(\theta) \leq \big(k^{(r)}_{\all}(\theta)\big)^{\frac{d + 1}{3}}.$$
\end{proposition}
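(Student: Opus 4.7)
The proposition is a geometric statement (part one) together with an immediate statistical consequence (part two). I would handle them separately, and frankly the first part is not something I would attempt to reprove from scratch — it is the main result of Hershberger and Suri and the contribution of the current paper is to import it. My plan is therefore to outline the refinement construction at a high level and then give the (short) reduction from the geometric statement to the statement about $k^{(r)}_{\hier}(\theta)$.

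For the geometric part, the plan is a recursion mirroring the two-dimensional proof of Berman–DasGupta used for Proposition~\ref{prop:partapp}. Given $\Pi\in\mathcal{P}_{\all,d,n}$ with $|\Pi|=k$, the idea is to choose an axis-aligned hyperplane $H$ that splits the ambient bounding box $L_{d,n}$ into two sub-rectangles, replace every box of $\Pi$ that is crossed by $H$ with its two halves, and then recurse on each side with the inherited sub-partition. This produces a hierarchical partition refining $\Pi$. In two dimensions one can always find an $H$ that extends a face of some box and cuts no box in the interior, which is why the size merely doubles. In $d\geq 3$ no such ``free'' cut exists in general; the technical core of Hershberger–Suri is an amortized charging argument (lower-dimensional cross-sections charged against the surface area of boxes) that produces a cut whose number of crossings, traded off against how balanced the two sides are, yields a recurrence of the form $T(k)\leq T(k_1)+T(k_2)$ with $k_1+k_2\leq k+O(k^{\alpha})$ and $\max(k_1,k_2)\leq \beta k$ for suitable $\alpha<1$ and $\beta<1$. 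A careful solution of this recurrence gives the $k^{(d+1)/3}$ bound. The hard part is the existence of such a cut in every dimension and the exponent analysis; I would simply cite their theorem for this.

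The consequence for $k^{(r)}_{\hier}(\theta)$ is then immediate from the definitions. Let $k=k^{(r)}_{\all}(\theta)$, so by definition there is a partition $\Pi\in\mathcal{P}_{\all,d,n}$ with $|\Pi|=k$ and $\theta_R\in \mathcal{F}^{(r)}_{d,n}|_R$ for every $R\in\Pi$. Apply the first part of the proposition to obtain a hierarchical refinement $\tilde\Pi\in\mathcal{P}_{\hier,d,n}$ with $|\tilde\Pi|\leq k^{(d+1)/3}$. Because $\tilde\Pi$ refines $\Pi$, every cell $\tilde R\in\tilde\Pi$ is contained in some $R\in\Pi$, so the restriction of a degree-$r$ polynomial to $\tilde R$ is again a degree-$r$ polynomial. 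Hence $\theta$ is piecewise polynomial of degree $r$ on $\tilde\Pi$, which by the definition of $k^{(r)}_{\hier}(\theta)$ gives $k^{(r)}_{\hier}(\theta)\leq|\tilde\Pi|\leq k^{(d+1)/3}$.

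The only genuine obstacle lies in part one, and it is purely combinatorial-geometric rather than statistical. The naive attempts — median cut along the most populated axis, or iteratively chasing free cuts — each fail to give $(d+1)/3$: the former ignores how many boxes are cut, and the latter ignores that free cuts stop existing in three and more dimensions. Everything else (the reduction to $k^{(r)}_{\hier}$ above, and then combining with Theorem~\ref{thm:adapt} to get a risk bound) is bookkeeping on top of the Hershberger–Suri construction.
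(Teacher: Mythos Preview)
Your proposal is correct and matches the paper's treatment: the paper does not prove this proposition at all but simply cites it as the main result of Hershberger, Suri, and T\'oth, and the ``consequence'' for $k^{(r)}_{\hier}$ is left implicit exactly as you spell it out (refine, restrict the polynomial to sub-rectangles, count). If anything you supply more than the paper does --- the high-level outline of the BSP recursion and the explicit refinement argument --- whereas the paper treats both parts as a black-box citation.
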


\begin{remark}
	A matching lower bound is also given in~\cite{hershberger2005binary} for the case $d = 3.$ Thus, to refine a rectangular partition (of $k$ pieces) into a hierarchical one, one necessarily increases the number of rectangular pieces to $O(k^{4/3})$ in the worst case.   
\end{remark}

The above result suggests that for arbitrary partitions in $d$ dimensions, our current approach will not yield the near minimax rate of convergence. Nevertheless, we state our risk bound that is implied by Proposition~\ref{prop:partapp2}.

\begin{theorem}
	Let $d > 2.$ There exists an absolute constant $C$ such that by setting $\lambda \geq C\:K_{r,d} \:\sigma^2\:\log N$ we have the following risk bound for $\hat{\theta}_{\hier,\lambda}$:
	\begin{equation*}
		\MSE(\hat{\theta}^{(r)}_{\hier,\lambda},\theta^*) \leq \lambda \frac{ \big(k^{(r)}_{\all}(\theta^*)\big)^{\frac{d + 1}{3}}}{N} + \frac{C\:\sigma^2}{N}.
	\end{equation*}
\end{theorem}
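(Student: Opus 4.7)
The plan is to derive this as a direct corollary of Theorem~\ref{thm:adapt} combined with Proposition~\ref{prop:partapp2}, mimicking the two dimensional argument but with the cubic root of exponent $d+1$ blowup. The approximation theoretic input is already packaged by Proposition~\ref{prop:partapp2}, so the proof is essentially a substitution and does not require any new ideas beyond what precedes the statement in the paper.

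First I would specialize Theorem~\ref{thm:adapt} to $a = \hier$, fix some convenient value of the slack parameter (say $\delta = 1/2$), and restrict the infimum on the right hand side by plugging in the particular choice $\theta = \theta^*$. Since $\|\theta^* - \theta^*\|^2 = 0$, dividing through by $N$ gives
\begin{equation*}
\MSE(\hat\theta^{(r)}_{\hier,\lambda},\theta^*) \;\leq\; \frac{2\lambda}{N}\, k^{(r)}_{\hier}(\theta^*) \;+\; \frac{C\sigma^2}{N},
\end{equation*}
after absorbing the numerical factor $1/(1-\delta)$ into the constant $C$. This holds as soon as $\lambda \geq C K_{r,d}\sigma^2\log N$, which matches the hypothesis of the theorem (up to the universal constant which we are free to enlarge).

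Next I would invoke Proposition~\ref{prop:partapp2} to replace $k^{(r)}_{\hier}(\theta^*)$ by the quantity actually appearing in the target inequality. By that proposition there exists a hierarchical refinement $\tilde\Pi$ of the minimal rectangular partition realizing $k^{(r)}_{\all}(\theta^*)$ satisfying $|\tilde\Pi| \leq \bigl(k^{(r)}_{\all}(\theta^*)\bigr)^{(d+1)/3}$. Since $\theta^*$ is still piecewise polynomial of degree $r$ on the refinement $\tilde\Pi$ (refining a partition preserves the piecewise polynomial property), the definition of $k^{(r)}_{\hier}$ yields $k^{(r)}_{\hier}(\theta^*) \leq |\tilde\Pi| \leq \bigl(k^{(r)}_{\all}(\theta^*)\bigr)^{(d+1)/3}$. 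Plugging this into the displayed bound above and absorbing the factor $2$ into the constant $C$ (the statement is written with $\lambda$ instead of $2\lambda$ as a multiplier on the complexity term) delivers the claimed inequality.

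There is no real obstacle here; the only thing to be careful about is the bookkeeping of constants and the fact that Proposition~\ref{prop:partapp2} is stated as a statement about refinements of rectangular partitions rather than directly about $k_{\hier}^{(r)}$, so one must justify that a hierarchical refinement of a partition on which $\theta^*$ is degree $r$ polynomial keeps $\theta^*$ piecewise polynomial of degree $r$. This is immediate because each rectangle of $\tilde\Pi$ lies inside some rectangle of the original partition where $\theta^*$ is already a degree $r$ polynomial, and the restriction of a polynomial to a sub-rectangle is still a polynomial of the same degree. All remaining work is cosmetic rearrangement.
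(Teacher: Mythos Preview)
Your proposal is correct and matches exactly the approach the paper takes: the theorem is stated as an immediate consequence of Theorem~\ref{thm:adapt} (specialized with $\theta=\theta^*$) combined with Proposition~\ref{prop:partapp2}, and the paper does not spell out any further details. Your extra remark about why a hierarchical refinement preserves the piecewise polynomial property is already absorbed in the ``As a consequence'' clause of Proposition~\ref{prop:partapp2}, so you could have simply quoted the inequality $k^{(r)}_{\hier}(\theta^*) \leq \bigl(k^{(r)}_{\all}(\theta^*)\bigr)^{(d+1)/3}$ directly.
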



Our approach of refining an arbitrary partition into a hierarchical partition does not seem to yield the $\tilde O\big(\sigma^2 \frac{k^{(r)}_{\all}(\theta^*)}{N} \big)$ rate of 
convergence for ORT in dimension higher than $2$ when the truth is a piecewise polynomial 
function on an \textit{arbitrary} rectangular partition. Rectangular partitions in higher 
dimensions could be highly complex; with some rectangles being very ``skinny'' in some 
dimensions. However, it turns out that if we rule out such anomalies, then it is still 
possible to attain our objective. Let us now define a class of partitions which rules out 
such anomalies. 

Let $R$ be a rectangle defined as $R = \Pi_{i = 1}^{d} [a_i,b_i] \subset L_{d,n}.$ Let the sidelengths of $R$ be defined as $n_i = 
b_i - a_i + 1$ for $i \in [d].$ Define its {\em aspect ratio} as $\mathcal{A}(R) = \max \{\frac{n_i}{n_j}: (i,j) \in [d]^2\}.$ For 
any $\alpha \geq 1$, let us call a rectangle $\alpha$ {\em fat} if 
we have $\mathcal{A}(R) \leq \alpha.$ Now consider a rectangular 
partition $\Pi \in \mathcal{P}_{\all,d,n}.$ We call $\Pi$ an \textit{$\alpha$ fat partition} if each of its constituent 
rectangles is $\alpha$ fat. Let us denote the class of $\alpha$ fat partitions of $L_{d,n}$ as $\mathcal{P}_{\fat(\alpha),d,n}.$ As 
before, we can now define the class of subspaces $S^{(r)}_{\fat(\alpha),d,n}$ corresponding to the set of partitions 
$\mathcal{P}_{\fat(\alpha),d,n}.$ 
For any array $\theta^*$ and any integer $r  >0$ we can also denote
\begin{equation*}
	k^{(r)}_{\fat(\alpha)}(\theta^*) = k_{S^{(r)}_{\fat(\alpha),d,n}}(\theta^*).
\end{equation*}


An important result in the area of binary space partitions is that any fat rectangular partition of $L_{n,d}$ can be refined into a hierarchical one with the number of rectangular pieces inflated by at most a constant factor. This is the content of the following proposition which is due to~\cite{de1995linear}.
\begin{proposition}[\cite{de1995linear}]\label{prop:fatpartapp}
	There exists a constant $C(d,\alpha) \geq 1$ depending only on $d$ and $\alpha$ such that any partition $\Pi \in \mathcal{P}_{\fat(\alpha),d,n}$ can be refined into a hierarchical partition $\tilde{\Pi} \in \mathcal{P}_{\hier,d,n}$ satisfying
	\begin{equation*}
		|\tilde{\Pi}| \leq C(d,\alpha) |\Pi|.
	\end{equation*}
	Equivalently, for any $\theta \in \R^{L_{n,d}}$ and any non negative integer $r$ we have
	\begin{equation*}
		k^{(r)}_{\hier}(\theta) \leq C(d,\alpha) k^{(r)}_{\fat(\alpha)}(\theta).
	\end{equation*}
\end{proposition}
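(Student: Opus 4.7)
The plan is to argue by induction on $k=|\Pi|$, following the standard binary space partition approach for fat objects that underlies de Berg's proof. The key ingredient is a \textbf{balanced cheap cut lemma}: for any $\alpha$-fat partition $\Pi$ of a rectangle $R \subseteq L_{d,n}$ with $|\Pi|=k$, there exist constants $m = m(d,\alpha)$ and $\beta = \beta(d,\alpha) < 1$ and an axis-aligned splitting hyperplane $H$ dividing $R$ hierarchically into $R_1, R_2$ such that (i) $H$ crosses at most $m$ rectangles of $\Pi$, and (ii) each $R_i$ meets at most $\beta k$ rectangles of $\Pi$. Once this lemma is available, the inductive construction is clear: cut hierarchically along $H$, split the at most $m$ crossed rectangles into two pieces each (one on each side), and recurse on the induced partitions on $R_1$ and $R_2$. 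Writing $T(k)$ for the worst-case refinement size of an $\alpha$-fat partition of size $k$, the resulting recursion $T(k) \leq T(k_1+m) + T(k_2+m)$ with $k_1+k_2 \leq k+m$ and $\max(k_1,k_2) \leq \beta k$ solves to $T(k) \leq C(d,\alpha)\, k$ by standard induction.

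The balanced cheap cut lemma is where fatness is critical, and this is what makes the proposition fail for general rectangular partitions (compare with the $|\Pi|^{(d+1)/3}$ bound in Proposition~\ref{prop:partapp2}). I would prove it by the following averaging argument. For each coordinate direction $j \in [d]$, project $\Pi$ onto the $j$-th axis to obtain a multiset $I_j$ of intervals (one per rectangle). The fatness condition $\mathcal{A}(R_i) \leq \alpha$ forces the $j$-projection of any rectangle $R_i$ to have length comparable, up to a factor $\alpha$, to its geometric mean sidelength. Sweeping a hyperplane orthogonal to the $j$-th axis across $R$ and summing the counts of crossed projections over all $d$ directions, a pigeonhole/Markov-style argument produces a direction $j$ and a splitting position so that the number of rectangles of $\Pi$ crossed by $H$ is $O(k^{(d-1)/d})$, not yet $O(1)$. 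To upgrade to a constant, one uses the fatness a second time: since crossed rectangles must have $j$-projections containing the split coordinate, and each such projection has length bounded below, only $O(1)$ rectangles can stack up near a median split. Choosing the split at a suitable weighted median simultaneously achieves the balance condition (ii).

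A subtle point is that the pieces of a crossed fat rectangle, after being cut by $H$, need not themselves be $\alpha$-fat; one sliver may have an unbounded aspect ratio. The fix, as in de Berg's proof, is to track a slightly relaxed invariant: one works with a class of partitions that are $\alpha'$-fat for some larger $\alpha' = \alpha'(\alpha, d)$ modulo a bounded number of ``auxiliary'' non-fat pieces per recursion level, and shows that this enlarged invariant is preserved and that the cheap cut lemma still holds within it. Alternatively, one can use a charging scheme in which each non-fat sliver produced by a cut is charged against a full parent rectangle, and fatness of the parent guarantees $O(1)$ charges per rectangle, which keeps the total count linear in $k$.

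The second ``equivalently'' statement of the proposition is then immediate: if $\Pi \in \mathcal{P}_{\fat(\alpha),d,n}$ with $|\Pi| = k^{(r)}_{\fat(\alpha)}(\theta)$ witnesses that $\theta$ is piecewise polynomial of degree $r$, the first part yields a hierarchical refinement $\tilde\Pi \in \mathcal{P}_{\hier,d,n}$ with $|\tilde\Pi| \leq C(d,\alpha)|\Pi|$ on which $\theta$ remains piecewise polynomial of the same degree, establishing $k^{(r)}_{\hier}(\theta) \leq C(d,\alpha)\, k^{(r)}_{\fat(\alpha)}(\theta)$. The hardest step in the whole plan is the cheap cut lemma, specifically passing from the ``averaged'' $O(k^{(d-1)/d})$ bound on crossings to a genuine $O(1)$ bound; this is precisely the geometric content borrowed from the BSP literature, and it is what makes the constant $C(d,\alpha)$ scale exponentially in $d$.
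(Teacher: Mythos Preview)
The paper does not prove this proposition; it is stated as a quotation of de~Berg's result and used as a black box. So there is no ``paper's own proof'' to compare against beyond the citation.

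That said, your sketch has a real gap at the step you yourself flag as hardest. The claim that one can upgrade from $O(k^{(d-1)/d})$ crossings to $O(1)$ crossings by a single hyperplane, because ``each such projection has length bounded below,'' is false: fatness bounds aspect ratio, not absolute size. Take $d=2$ and tile a square by a $\sqrt{k}\times\sqrt{k}$ grid of small squares (each $1$-fat); any axis-aligned line through the middle crosses $\sqrt{k}$ of them, and no choice of split position avoids this. So a ``balanced cheap cut'' crossing only $m(d,\alpha)$ rectangles does not exist in general, and the recursion $T(k)\le T(k_1+m)+T(k_2+m)$ is not available.

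De~Berg's actual argument is multi-scale rather than single-cut: one builds a canonical hierarchical subdivision (essentially a balanced kd-tree, always splitting the longest side) and shows that each individual $\alpha$-fat rectangle $R$ is fragmented into only $O_{d,\alpha}(1)$ pieces \emph{over the entire tree}. The reason is that $R$ can only be cut by cells whose diameter is comparable to that of $R$ (larger cells contain $R$ or miss it; smaller cells are contained in a piece of $R$), and fatness of both $R$ and the cells forces $R$ to meet only $O_{d,\alpha}(1)$ cells at that scale. Summing this per-object bound over the $k$ rectangles gives $|\tilde\Pi|\le C(d,\alpha)k$. Your ``equivalently'' paragraph is fine once the first part is in hand.
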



The above proposition gives rise to a risk bound for ORT in all dimensions.
\begin{theorem}\label{thm:fat}
	For any dimension $d$ there exists an absolute constant $C$ such that by setting $\lambda \geq C\:K_{r,d}\:\sigma^2\:\log n$ we have the following risk bound for $\hat{\theta}_{\hier,\lambda}$:
	\begin{equation*}
		\E \|\hat{\theta}^{(r)}_{\hier,\lambda} - \theta^*\|^2 \leq \inf_{\theta \in \R^{L_{n,d}}} \big(2\:\|\theta - \theta^*\|^2 + \lambda \:C(d,\alpha)\: k^{(r)}_{\fat(\alpha)}(\theta)\big) + C\:\sigma^2.
	\end{equation*}
\end{theorem}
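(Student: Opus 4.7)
The plan is to combine the oracle inequality of Theorem~\ref{thm:adapt} applied to the ORT estimator with the binary-space-partition refinement result for fat partitions (Proposition~\ref{prop:fatpartapp}). The strategy is transparent: Theorem~\ref{thm:adapt} already gives exactly the kind of trade-off we want but phrased in terms of $k^{(r)}_{\hier}(\theta)$, while Proposition~\ref{prop:fatpartapp} supplies the inequality $k^{(r)}_{\hier}(\theta) \leq C(d,\alpha) k^{(r)}_{\fat(\alpha)}(\theta)$ that lets us replace the former by the latter inside the infimum.

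Concretely, I would first specialize Theorem~\ref{thm:adapt} to $a = \hier$ and choose $\delta = 1/3$, so that $(1+\delta)/(1-\delta) = 2$, matching the coefficient on $\|\theta - \theta^*\|^2$ that appears in the target bound. The hypothesis of Theorem~\ref{thm:adapt} requires $\lambda \geq C K_{r,d} \sigma^2 \log N / \delta$; since $N = n^d$ we have $\log N = d \log n$, so, given that constants in this section may depend on $d$ and $r$, the assumption $\lambda \geq C K_{r,d} \sigma^2 \log n$ in the statement already implies the hypothesis of Theorem~\ref{thm:adapt} once $C$ is taken sufficiently large. This produces
\begin{equation*}
\E \|\hat{\theta}^{(r)}_{\hier,\lambda} - \theta^*\|^2 \leq \inf_{\theta \in \R^{L_{d,n}}} \bigl[\, 2\,\|\theta - \theta^*\|^2 + \tfrac{3}{2}\,\lambda\, k^{(r)}_{\hier}(\theta) \,\bigr] + C \sigma^2,
\end{equation*}
where the final $C\sigma^2$ absorbs the factor $1/(\delta(1-\delta)) = 9/2$ from Theorem~\ref{thm:adapt}.

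Next, for any $\theta \in \R^{L_{d,n}}$ with $k^{(r)}_{\fat(\alpha)}(\theta)$ finite, let $\Pi$ be a fat partition of cardinality $k^{(r)}_{\fat(\alpha)}(\theta)$ on which $\theta$ is piecewise polynomial of degree $r$. Proposition~\ref{prop:fatpartapp} produces a hierarchical refinement $\tilde{\Pi}$ with $|\tilde{\Pi}| \leq C(d,\alpha)\,|\Pi|$; since $\theta$ remains piecewise polynomial of degree $r$ on any refinement of $\Pi$, we deduce $k^{(r)}_{\hier}(\theta) \leq C(d,\alpha)\, k^{(r)}_{\fat(\alpha)}(\theta)$. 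If $k^{(r)}_{\fat(\alpha)}(\theta) = \infty$ the inequality is vacuous. Substituting this into the displayed oracle bound and absorbing the harmless factor $3/2$ into $C(d,\alpha)$ yields the theorem.

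There is essentially no obstacle here; the result is a short deduction from two ingredients already in place. The only bookkeeping items worth double-checking are the logarithmic factor (swapping $\log N$ for $\log n$, which is legal because constants may depend on $d$) and the matching of the approximation coefficient via the choice $\delta = 1/3$. All substantive work is done inside Theorem~\ref{thm:adapt}, whose proof uses the standard high-dimensional statistics machinery for $\ell_0$-penalized estimators, and inside Proposition~\ref{prop:fatpartapp}, which invokes the nontrivial partitioning construction of~\cite{de1995linear}.
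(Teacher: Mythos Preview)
Your proposal is correct and is precisely the argument the paper has in mind: the theorem is stated immediately after Proposition~\ref{prop:fatpartapp} as a direct consequence of combining that refinement bound with the oracle inequality of Theorem~\ref{thm:adapt}, and the paper gives no further proof. Your choice of $\delta = 1/3$ to recover the coefficient $2$, the substitution $k^{(r)}_{\hier}(\theta) \leq C(d,\alpha)\,k^{(r)}_{\fat(\alpha)}(\theta)$, and the bookkeeping about $\log N$ versus $\log n$ are all exactly the details that need to be checked.
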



\begin{remark}
	For any fixed dimension $d$, when $\theta^*$ is piecewise polynomial of degree $r$ on a fat paritition, the above theorem implies a $O\big(\sigma^2 \frac{k^{(r)}_{\all}(\theta^*)}{N} \log N\big)$ bound to the MSE of the ORT estimator (of order $r$). Thus, for arbitrary fat partitions in any dimension, ORT attains our objective of enjoying the near minimax rate of convergence and being computationally efficient. For any fixed dimension $d$, this is the first result of its type that we are aware of. 
\end{remark}

\begin{remark}
	It should be mentioned here that the constant $C(d,\alpha)$ scales exponentially with $d$, at least in the construction which is due to~\cite{de1995linear}. In any case, recall that all of our results are meaningful when $d$ is low to moderate.
\end{remark}

\subsection{Our Results for Dyadic CART}
In the previous section, we showed that the ORT estimator attains the desired $\tilde 
O\big(\sigma^2 \frac{k^{(r)}_{\all}(\theta^*)}{N}\big)$ rate for all $\theta^*$ adaptively 
in dimensions $d = 1,2$ and for all $\theta^*$ which are piecewise polynomial on a fat 
partition in all dimensions $d > 2.$ Since the ORT is more computationally expensive than 
Dyadic CART, a natural question is whether there are analogous results for Dyadic CART. In this case, the relevant question is 
\medskip

\textit{Can an arbitrary nonhierarchical partition of $L_{d,n}$ be refined into a recursive dyadic partition without increasing the number of rectangles too much?}. 
\medskip

When $d = 1$ or $d = 2$, we can give an argument to show there exists a recursive dyadic partition refining a given arbitrary rectangular partition with number of rectangles being multiplied by a log factor. This is the content of our next result which is proved in Section~\ref{sec:dyaproof}. 

\begin{proposition}\label{prop:dyadicref}
	Given any positive integer $n$ and given a partition $\Pi \in \mathcal{P}_{\all,1,n}$ with $k$ intervals, there exists a refinement $\tilde{\Pi} \in \mathcal{P}_{\rdp,1,n}$ which is a recursive dyadic partition with at most $C k \log (en/k)$ intervals where $C > 0$ is an universal constant.
	Equivalently, for all $\theta \in \R^{L_{1,n}}$ and all non negative integers $r$, we have 
	\begin{equation}\label{eq:refine1}
	k^{(r)}_{\rdp}(\theta) \leq C k^{(r)}_{\all}(\theta) \log \frac{en}{k^{(r)}_{\all}(\theta)}.
	\end{equation}

	Moreover, given any positive integer $n$ and an arbitrary partition $\Pi \in \mathcal{P}_{\all,2,n}$ of $L_{2,n}$ with $k$ rectangles there exists a refinement $\Pi^{'} \in \mathcal{P}_{\rdp,2,n}$ which is a recursive dyadic partition  with at most $C k (\log n)^2$ rectangles where $C$ is a universal constant.  Equivalently, for all $\theta \in \R^{L_{2,n}}$ and all non negative integers $r$, we have 
	\begin{equation}\label{eq:refine2}
	k^{(r)}_{\rdp}(\theta) \leq C (\:\log n)^2 k^{(r)}_{\all}(\theta).
	\end{equation}
	
\end{proposition}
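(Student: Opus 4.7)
The plan is to prove the two parts separately. The one-dimensional case will use a direct dyadic tree construction adapted to the interior endpoints of $\Pi$, while the two-dimensional case will proceed by decomposing each rectangle of $\Pi$ into canonical dyadic sub-rectangles and showing the resulting tiling is realizable as an RDP.

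For the 1D case, let $\Pi$ have interior endpoints $p_1 < \cdots < p_{k-1}$. I would take $\tilde\Pi$ to be the set of leaves of the smallest subtree $T$ of the full dyadic tree of $[1,n]$ such that every $p_i$ coincides with a boundary between two consecutive leaves of $T$. Equivalently, a dyadic interval is an internal node of $T$ iff its interior strictly contains some $p_i$. At depth $j$ there are at most $2^j$ distinct dyadic intervals, and there are only $k-1$ interior endpoints in total, so the number of internal nodes at depth $j$ is at most $\min(2^j, k-1)$. Summing across depths,
\begin{equation*}
|\tilde\Pi| - 1 \;=\; |\mathrm{internal}(T)| \;\le\; \sum_{j=0}^{\lceil \log_2 n\rceil} \min(2^j, k-1) \;\le\; 2k + k\log_2(n/k) \;\le\; Ck\log(en/k).
\end{equation*}
The refinement inequality~\eqref{eq:refine1} is then immediate because any $\theta$ that is piecewise polynomial of degree $r$ on $\Pi$ is piecewise polynomial of the same degree on every refinement of $\Pi$.

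For the 2D case, I would use the classical fact that any interval $[a,b]\subset[1,n]$ is a disjoint union of at most $O(\log n)$ canonical dyadic sub-intervals of $[1,n]$. Taking products, any axis-aligned rectangle $R\subset L_{2,n}$ decomposes into at most $O((\log n)^2)$ \emph{canonical dyadic sub-rectangles}, by which I mean products of canonical dyadic sub-intervals of $[1,n]$ in the two coordinates. Applying this to each $R\in \Pi$ and taking the union yields a tiling $\mathcal{F}$ of $L_{2,n}$ by at most $Ck(\log n)^2$ canonical dyadic rectangles that refines $\Pi$.

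The main obstacle is to show that any such tiling $\mathcal{F}$ is realizable as a recursive dyadic partition. I would prove this by induction on $\log_2 n$. If $\mathcal{F} = \{L_{2,n}\}$ there is nothing to do. Otherwise no element of $\mathcal{F}$ equals $L_{2,n}$, and a canonical dyadic rectangle straddles the $x$-midline of $L_{2,n}$ only when its $x$-interval has depth $0$, i.e.\ equals $[1,n]$; analogously for the $y$-midline. If $\mathcal{F}$ contained both an $x$-straddler $[1,n]\times J$ and a $y$-straddler $I\times [1,n]$, their intersection $I\times J$ would be non-empty, contradicting the tiling property. Hence at most one midline is straddled, and we may dyadically split $L_{2,n}$ along the other axis; the tiling $\mathcal{F}$ then partitions cleanly into tilings of each half by canonical dyadic sub-rectangles of that half (since the dyadic structure is hereditary), and the inductive hypothesis applies. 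Combined with the bound $|\mathcal{F}|\le Ck(\log n)^2$, this yields~\eqref{eq:refine2}.
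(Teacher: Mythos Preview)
Your proposal is correct and follows essentially the same approach as the paper. In one dimension you grow the dyadic tree adaptively and bound the number of internal nodes per level by $\min(2^j,k-1)$, which is equivalent to the paper's argument of growing the full tree to about $k$ leaves and then splitting at most $k$ nodes per level for $O(\log(n/k))$ further levels; in two dimensions your decomposition into products of dyadic intervals together with the ``at most one midline is straddled'' argument is exactly the paper's Lemma characterizing recursive dyadic partitions of $L_{2,n}$ as precisely those tilings by products of dyadic intervals.
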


We have not seen the above result (equation~\eqref{eq:refine2}) stated explicitly in the Statistics literature. It is probable that this result is known in the combinatorics or computational geometry literature. However, since we could locate an exact reference, we provide its proof in Section~\ref{sec:dyaproof}.

\begin{remark}
	The exponent of $\log n$, which is $1$ for $d = 1$ and $2$ for $d = 2$, cannot be improved 
	in general. It is now natural to conjecture that a result like above is true for a general 
	$d$ where the exponent of $\log n$ is $d.$ However, we do not know whether this is true or 
	not. Our current proof for the $d = 2$ case breaks down and cannot be extended to higher dimensions. See Remark~\ref{rem:dyadicint} for more explanations on this.
\end{remark}

The implication of Proposition~\ref{prop:dyadicref} is the following corollary for Dyadic CART.
\begin{corollary}\label{cor:pcdc}
	For $d = 1$ and any integer $n$, there exists a universal constant $C > 0$ such that by setting $\lambda = C K_{r,1} \:\sigma^2\:\log n$ we have the following risk bound, 
	\begin{equation*}
		MSE(\hat{\theta}^{(r)}_{\rdp,\lambda},\theta^*) \leq C K_{r,1} \sigma^2 \frac{ k^{(r)}_{\all}(\theta^*)}{N} \log \frac{n}{k^{(r)}_{\all}(\theta)} \log n  + \frac{C\:\sigma^2}{N}.
	\end{equation*}
	
	For $d = 2$ and any integer $n$, there exists a universal constant $C > 0$ such that by setting $\lambda = C K_{r,2} \:\sigma^2\:\log n$ we have the following risk bound, 
	\begin{equation*}
		MSE(\hat{\theta}^{(r)}_{\rdp,\lambda},\theta^*) \leq C K_{r,2}\:\sigma^2 \frac{ k^{(r)}_{\all}(\theta^*)}{N} (\log N)^3  + \frac{C\:\sigma^2}{N}.
	\end{equation*}
	
\end{corollary}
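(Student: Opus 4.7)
The plan is to view Corollary~\ref{cor:pcdc} as a direct combination of two results already at our disposal: the oracle inequality in Theorem~\ref{thm:adapt} specialized to $a=\rdp$, and the combinatorial refinement bound in Proposition~\ref{prop:dyadicref}. The strategy is not to re-derive anything about the Dyadic CART procedure itself, but to translate the $k_{\rdp}^{(r)}$-dependent oracle bound into a bound that depends only on $k_{\all}^{(r)}(\theta^*)$, which is the quantity that the statement is phrased in.

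Concretely, I would proceed as follows. First, fix $\delta = 1/2$ (any fixed value in $(0,1)$ works; the constants will be absorbed) in Theorem~\ref{thm:adapt} and take $a = \rdp$ with $\lambda = C K_{r,d}\,\sigma^2 \log N$, which in the regime $d\in\{1,2\}$ is of the same order as $C K_{r,d}\,\sigma^2 \log n$. In the resulting oracle inequality
\[
\E\|\hat\theta^{(r)}_{\rdp,\lambda}-\theta^*\|^2 \leq \inf_{\theta\in\R^N}\Bigl[3\|\theta-\theta^*\|^2 + 2\lambda\,k_{\rdp}^{(r)}(\theta)\Bigr] + C\sigma^2,
\]
I choose $\theta = \theta^*$ in the infimum, so the approximation term vanishes and only $\lambda\,k_{\rdp}^{(r)}(\theta^*)$ and the additive $C\sigma^2$ remain. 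Dividing by $N$ then yields the generic bound $\MSE(\hat\theta^{(r)}_{\rdp,\lambda},\theta^*) \leq C\lambda\,k_{\rdp}^{(r)}(\theta^*)/N + C\sigma^2/N$.

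The second step is to substitute the refinement bounds from Proposition~\ref{prop:dyadicref}. For $d=1$ this gives $k_{\rdp}^{(r)}(\theta^*) \leq C\,k_{\all}^{(r)}(\theta^*)\,\log\!\bigl(en/k_{\all}^{(r)}(\theta^*)\bigr)$, and plugging this in together with $\lambda \asymp K_{r,1}\sigma^2\log n$ produces the first bound in the statement. For $d=2$ we use $k_{\rdp}^{(r)}(\theta^*) \leq C\,k_{\all}^{(r)}(\theta^*)(\log n)^2$; combined with $\lambda \asymp K_{r,2}\sigma^2\log n$ and the observation that $\log n \asymp \log N$ when $d=2$, this gives a factor of $(\log n)^3 \asymp (\log N)^3$ in front of $k_{\all}^{(r)}(\theta^*)/N$, exactly matching the second bound.

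There is no real obstacle here, since the work has been done in Theorem~\ref{thm:adapt} and Proposition~\ref{prop:dyadicref}; the proof is essentially bookkeeping. The only small points to be careful about are that one must pick $\theta=\theta^*$ inside the oracle infimum so that the approximation error disappears (any $\theta^*$ is trivially in some subspace $S^{(r)}(\Pi)$ for $\Pi \in \mathcal{P}_{\rdp,d,n}$ the trivial full partition, so the infimum is finite), and that the constants $K_{r,d}$ and the $\log N$ versus $\log n$ factors must be tracked so that the absorption of universal constants into $C$ is valid. Once these are handled, the two displayed inequalities follow immediately.
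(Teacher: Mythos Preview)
Your proposal is correct and matches the paper's own approach essentially verbatim: the paper introduces Corollary~\ref{cor:pcdc} as ``the implication of Proposition~\ref{prop:dyadicref}'' and, in the closely related Theorem~\ref{thm:fastrate}, proves the $d=1$ case by exactly the same two-step argument you outline (apply Theorem~\ref{thm:adapt} with $a=\rdp$ and $\theta=\theta^*$, then invoke~\eqref{eq:refine1}). The $d=2$ case is handled in the same way using~\eqref{eq:refine2}, so there is nothing to add.
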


To summarize, Dyadic CART attains the same rate as the ORT with an extra $\log N$ factor when $d = 1$ and with an extra $(\log N)^2$ factor when $d = 2.$ We do not know whether for $d > 2$, a result for Dyadic CART analogous to Theorem~\ref{thm:fat} for fat partitions is possible or not.





\section{\textbf{Results for Multivariate Functions with Bounded Total Variation}}\label{sec:tv}

In this section, we will describe an application of Theorem~\ref{thm:adapt} to show that Dyadic CART of order $0$ has near optimal (worst case and adaptive) risk guarantees in any dimension when we consider estimating functions with bounded total variation. Let us first define what we mean by total variation.

Let us think of $L_{d,n}$ as the $d$ dimensional regular lattice graph. Then, thinking of $\theta \in \R^{L_{d,n}}$ as a function on $L_{d,n}$ we define
\begin{equation}\label{eq:TVdef}
\TV(\theta) =  \sum_{(u,v) \in E_{d,n}} |\theta_{u} - \theta_{v}| 
\end{equation}
where $E_{d,n}$ is the edge set of the graph $L_{d,n}.$ 
One way to motivate the above definition is as follows. If we think  $\theta[i_1,\dots,i_n] = f(\frac{i_1}{n},\dots,\frac{i_d}{n})$ for a differentiable function $f: [0,1]^{d} \rightarrow \R$ then the above definition divided by $n^{d - 1}$ is precisely the Reimann approximation for $\int_{[0,1]^d} \|\nabla f\|_1.$ Of course, the definition in~\eqref{eq:TVdef} applies to arbitrary arrays, not just for evaluations of a differentiable function on the grid.  

The usual way to estimate functions/arrays with bounded total variation is to use the Total Variation Denoising (TVD) estimator defined as follows:
$$\hat{\theta}_{\lambda} = \argmin_{\theta \in \R^{L_{d,n}}} \big(\|y - \theta\|^2 + \lambda \TV(\theta)\big).$$ 
This estimator was first introduced in the $d = 2$ case by~\cite{rudin1992nonlinear} for image denoising. This estimator is now a standard and widely succesful technique to do image denoising. In the $d = 1$ setting, it is known (see, e.g.~\cite{donoho1998minimax},~\cite{mammen1997locally}) that a well tuned TVD estimator is minimax rate optimal on the class of all bounded variation signals $\{\theta: \TV(\theta) 
\leq  V\}$ for $ V > 0$. It is also known (e.g, see~\cite{guntuboyina2020adaptive},~\cite{dalalyan2017tvd},~\cite{ortelli2018}) that, when properly tuned, the above estimator is capable of attaining the oracle MSE scaling like $O(\frac{k_{\all}^{(0)}(\theta^*)}{N})$, up to a log factor in $N.$

In the multivariate setting ($d \geq 2$), worst case performance of the TVD estimator has been studied in~\cite{hutter2016optimal},~\cite{sadhanala2016total}. These results show that like in the 1D setting, a well tuned TVD estimator is nearly (up to log factors) minimax rate optimal over the class $\{\theta \in \R^{L_{d,n}}: \TV(\theta) \leq V\}$ of bounded variation signals in any dimension.

The goal of this section is to proclaim that the Dyadic CART estimator $\hat{\theta}^{(0)}_{\rdp,\lambda}$ enjoys similar statistical guarantees as the TVD estimator and possibly even has some advantages over TVD which we list below.

\begin{itemize}
	\item The Dyadic CART estimator $\hat{\theta}^{(0)}_{\rdp,\lambda}$ is computable in $O(N)$ time in low dimensions $d.$ Note that TVD is mostly used for image processing in the $d = 2,3$ case. Recall that the lattice has at least $2^d$ points as soon as $n \geq 2$ so it does not make sense to think of high $d.$ While TVD estimator is the solution of a convex optimization procedure, there is no known algorithm which computes it provably in $O(N)$ time to the best of our knowledge. As we show in Theorem~\ref{thm:tvadapmlb} and Theorem~\ref{thm:dcadap}, the Dyadic CART estimator is also minimax rate optimal over the class $\{\theta \in \R^{L_{d,n}}: \TV(\theta) \leq V\}.$ Thus, the Dyadic CART estimator appears to be the first provably linear time computable estimator achieving the minimax rate, up to log factors, for functions with bounded total variation.

	\item We also show that the Dyadic CART estimator is also adaptive to the intrinsic dimensionality of the true signal $\theta^*.$ We make the meaning of adapting to intrinsic dimensionality precise later in this section. It is not known whether the TVD estimator demonstrates such adaptivity. 

	\item One corollary of Theorem~\ref{thm:adapt} is that the Dyadic CART estimator nearly attains the oracle risk when the truth $\theta^*$ is piecewise constant on a recursive dyadic partition of $L_{n,d}.$ For such signals, the ideally tuned TVD estimator, even in the $d = 2$ case, provably cannot attain the oracle risk for such piecewise constant signals in general; see Theorem $2.3$ in~\cite{chatterjee2019new}.   
\end{itemize}

\subsubsection{Adaptive Minimax Rate Optimality of Dyadic CART}
We now describe risk bounds for the Dyadic Cart estimator for bounded variation arrays. 
Let us define the following class of bounded variation arrays:
\begin{equation*}
	K_{d,n}(V) = \{\theta \in L_{d,n}: \TV(\theta) \leq V\}
\end{equation*}
For any generic subset $S \subset [d]$, let us denote its cardinality by $|S|.$ For any vector $x \in [n]^d$ let us define $x_S \in [n]^{|S|}$ to be the vector $x$ restricted to the coordinates given by $S.$ We now define
\begin{equation*}
	K^{S}_{d,n}(V) = \{\theta \in K_{d,n}(V): \theta(x) = \theta(y) \:\forall x,y\: \in \:[n]^d  \:\:\text{with}\:\: x_S = y_S\}
\end{equation*}
In words, $K^{S}_{n,d}(V)$ is just the set of arrays in $K_{d,n}(V)$ which are a function of the coordinates within $S$ only. In this section, we will show that the Dyadic CART estimator is minimax rate optimal (up to log factors) over the parameter space $K^{S}_{d,n}(V)$ simultaneously over all subsets $S \subset [d].$ This means that the Dyadic CART performs as well as an oracle estimator which knows the subset $S.$ This is what we mean when we say that the Dyadic CART estimator adapts to intrinsic dimensionality. To the best of our knowledge, such an oracle property in variable selection is rare in Non Parametric regression. The work in~\cite{bertin2008selection} shows a two step procedure for adapting to instrinsic dimensionality for multivariate Holder smooth function classes. The only comparable result that we are aware of for a spatially heterogenous function class is Theorem $3$ in~\cite{deng2018isotonic} which proves a similar adaptivity result in multivariate isotonic regression.   

Fix a subset $S \subset [d]$ and let $s = |S|.$ Consider our Gaussian mean estimation problem where it is known that the underlying truth $\theta^* \in K^{S}_{d,n}(V).$ We could think of $\theta^*$ as $n^{d - s}$ copies of a $s$ dimensional array $\theta^*_{S} \in \R^{L_{s,n}}.$ It is easy to check that $\theta^*_{S} \in K_{s,n}(V_S)$ where $V_s = \frac{V}{n^{d - s}}.$ Estimating $\theta^*$ is equivalent to estimating the $s$ dimensional array $\theta^*_{S}$ where the noise variance is now reduced to $\sigma^2_{S} = \frac{\sigma^2
}{n^{d - s}}$ because we can average over $n^{d - s}$ elements per each entry of $\theta^*_{S}.$ Therefore, we now have a reduced Gaussian mean estimation problem where the noise variance is $\sigma^2_{S}$ and the parameter space is $K_{n,s}(V_S).$ A tight lower bound to the minimax risk for the parameter space $K_{d,n}(V)$ for arbitrary $n,d,V > 0$ is available in~\cite{sadhanala2016total}. Using the above logic and this existing minimax lower bound allows us to establish a lower bound to the minimax risk for the parameter space $K^{S}_{d,n}(V).$ The detailed proof is given in Section~\ref{Sec:proofs}.

\begin{theorem}[Minimax Lower Bound over $K^{S}_{d,n}(V)$]
	\label{thm:tvadapmlb}
	Fix positive integers $n,d$ and let $S \subset [d]$ such that $s = |S| \geq 2.$ Let $V > 0$ and $V_S = \frac{V}{n^{d - s}}.$ Similarly, for $\sigma > 0,$ let $\sigma^2_S = \frac{\sigma^2}{n^{d - s}}.$ There exists a universal constant $c > 0$ such that 
	\begin{equation*}
		\inf_{\tilde{\theta} \in \R^{L_{d, n}}} \sup_{\theta \in K^{S}_{d,n}(V)} \E_{\theta} \|\tilde{\theta} - \theta\|^2 \geq c\:n^{d - s}\: \min\{\frac{\sigma_{S}\:V_S}{2s} \sqrt{1 + \log(\frac{2\:\sigma\:s\:n^s}{V_S})}, n^{s} \sigma_S^2, \frac{V_S}{s}^2 + \sigma_S^2\}.
	\end{equation*}
	If $|S| = 1$ then 
	\begin{equation*}
		\inf_{\tilde{\theta} \in \R^{L_{d, n}}} \sup_{\theta \in K^{S}_{d,n}(V)} \E_{\theta} \|\tilde{\theta} - \theta\|^2 \geq c\:n^{d - 1}\: \min\{(\sigma^2_{S} V_{S})^{2/3} n^{1/3}, n\:\sigma^2_{S}, n\:V^2_{S}\}.
	\end{equation*}
\end{theorem}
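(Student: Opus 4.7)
The plan is to reduce the $d$-dimensional problem over the restricted class $K^{S}_{d,n}(V)$ to the standard $s$-dimensional bounded variation estimation problem (where $s = |S|$), and then invoke the known minimax lower bound from~\cite{sadhanala2016total} in dimension $s$.

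First, I would parametrize the class $K^{S}_{d,n}(V)$ via the bijection that sends a function $\theta \in \R^{L_{d,n}}$ depending only on the coordinates in $S$ to its ``trace'' $\tilde{\theta} \in \R^{L_{s,n}}$ defined by $\tilde{\theta}(x_S) = \theta(x)$. Two elementary accounting steps link the two problems: (i) for Euclidean norms, $\|\theta\|_{L_{d,n}}^2 = n^{d-s}\,\|\tilde{\theta}\|_{L_{s,n}}^2$, because each $x_S \in [n]^s$ has $n^{d-s}$ preimages; (ii) for total variation, only the edges of $L_{d,n}$ parallel to the coordinate directions in $S$ contribute, and each corresponding edge of $L_{s,n}$ is counted $n^{d-s}$ times, so $\TV_{L_{d,n}}(\theta) = n^{d-s}\,\TV_{L_{s,n}}(\tilde{\theta})$. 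This yields $\tilde{\theta} \in K_{s,n}(V_S)$ with $V_S = V/n^{d-s}$, and the identification is onto.

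Next I would reduce the observation model. Given $y = \theta^* + \sigma Z$ with $\theta^* \in K^{S}_{d,n}(V)$, averaging the entries of $y$ over the $n^{d-s}$ points sharing a common value of $x_S$ produces a sufficient statistic $\bar{y} \in \R^{L_{s,n}}$ satisfying $\bar{y} = \tilde{\theta}^* + \sigma_S \tilde{Z}$, where $\tilde{Z}$ has i.i.d.\ standard Gaussian entries and $\sigma_S^2 = \sigma^2/n^{d-s}$. To turn this into a lower bound for arbitrary estimators (not just those that are functions of the sufficient statistic), note that for any estimator $\hat{\theta}$ of $\theta^*$, let $\Pi \hat{\theta}$ denote its orthogonal projection onto the linear subspace of arrays that depend only on $x_S$. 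Since $\theta^* \in K^{S}_{d,n}(V)$ lies in this subspace, Pythagoras gives $\|\hat{\theta} - \theta^*\|^2 \geq \|\Pi\hat{\theta} - \theta^*\|^2 = n^{d-s}\,\|\widetilde{\Pi\hat{\theta}} - \tilde{\theta}^*\|_{L_{s,n}}^2$, so we can restrict the infimum to estimators of the form $\Pi\hat{\theta}$, which are measurable functions of $\bar{y}$. Hence
\begin{equation*}
\inf_{\tilde{\theta}}\sup_{\theta \in K^{S}_{d,n}(V)} \E_\theta \|\tilde{\theta}-\theta\|^2 \;\geq\; n^{d-s}\,\inf_{\hat{\eta}}\sup_{\eta \in K_{s,n}(V_S)} \E_\eta \|\hat{\eta}-\eta\|^2,
\end{equation*}
where the right-hand infimum is over all estimators in the $s$-dimensional Gaussian mean model with noise level $\sigma_S$.

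Finally I would simply plug in the known minimax lower bound for the bounded variation class on $L_{s,n}$: the result from~\cite{sadhanala2016total} gives, for $s \geq 2$, a bound of the claimed form $c \min\{(\sigma_S V_S/(2s))\sqrt{1 + \log(2\sigma s n^s / V_S)},\, n^s \sigma_S^2,\, (V_S/s)^2 + \sigma_S^2\}$, while the univariate case $s=1$ goes back to~\cite{donoho1998minimax} and produces the $n^{1/3}$ rate stated in the theorem. Multiplying by the prefactor $n^{d-s}$ yields both displays. The only real work beyond citing the existing $s$-dimensional lower bounds is the norm/TV bookkeeping and the projection step; I expect no genuine obstacle, since the sufficiency argument is clean and the $s$-dimensional minimax bounds are off-the-shelf.
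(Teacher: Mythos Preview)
Your proposal is correct and follows essentially the same route as the paper: reduce the $d$-dimensional problem over $K^{S}_{d,n}(V)$ to the $s$-dimensional bounded variation problem via sufficiency of the averaged data $\bar{y}$, then invoke the existing lower bound of \cite{sadhanala2016total} for $s\geq 2$ (for $s=1$ the paper cites \cite{chatterjee2018denoising} via the monotone-class embedding rather than \cite{donoho1998minimax}, but either reference works). One small expository slip: $\Pi\hat{\theta}$ is a function of $y$, not automatically of $\bar{y}$; the passage from estimators based on $y$ to estimators based on $\bar{y}$ is justified by the sufficiency you already set up, not by the projection step.
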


Let us now explain the above result. If we take the subset $S = [d]$ this is exactly the lower bound in Theorem $2$ of~\cite{sadhanala2016total}. All we have done is stated the same result for any subset $S$ since we can reduce the estimation problem in $K^{S}_{d,n}(V)$ to a $s$ dimensional estimation problem over $K_{s,n}(V_S).$ The bound is in terms of a minimum of three terms. It is enough to explain this bound in the case when $S = [d]$ as similar reasoning holds for any subset $S$ with $s = |S| \geq 2.$ Thinking of $\sigma$ as a fixed constant, the three terms in the minimum on the right side corresponds to different regimes of $V.$ It can be shown that the constant array with each entry $\overline{y}$ attains the $V^2 + \sigma^2$ rate which is dominant when $V$ is very small. The estimator $y$ itself attains the $N \sigma^2$ rate which is dominant when $V$ is very large. Hence, these regimes of $V$ can be thought of as trivial regimes. In the nontrivial regime, the lower bound is $c\: \min\{\frac{\sigma\:V}{2d} \sqrt{1 + \log(\frac{2\:\sigma\:d\:N}{V})}\}.$ 


It is also known that a well tuned TVD estimator is minimax rate optimal, in the nontrivial regime, over $K_{d,n}(V)$ for all $d \geq 2$, up to log factors; see~\cite{hutter2016optimal}. For instance, it achieves the above minimax lower bound (up to log factors) in the nontrivial regime. For this reason, we can define an oracle estimator (which knows the set $S$) attaining the minimax lower bound over $K_{d,n}^{S}(V)$ in Theorem~\ref{thm:tvadapmlb}, up to log factors. The oracle estimator would first obtain $\overline{y}_{S}$ by averaging the observation array $y$ over the coordinates in $S^{C}$ and then it would apply the $s$ dimensional TVD estimator on $\overline{y}_{S}.$ Our main point here is that the Dyadic CART estimator performs as well as this oracle estimator, without the knowledge of $S$. In other words, its risk nearly (up to log factors) matches the minimax lower bound in Theorem~\ref{thm:tvadapmlb} adaptively over all subsets $S \subset [d].$ 
This is the content of our next theorem which is proved in Section~\ref{Sec:proofs} (in the supplementary file). 

\begin{theorem}[Adaptive Risk Bound for Dyadic Cart]
	\label{thm:dcadap}
	Fix any positive integers $n,d.$ Let $\theta^* \in K^{S}_{d,n}(\infty)$ be the underlying truth where $S \subset [d]$ is any subset with $|S| \geq 2.$ Let $V^* = \TV(\theta^*).$
	Let $V^*_{S} = \frac{V^*}{n^{d - s}}$ and $\sigma^2_{S} = \frac{\sigma^2}{n^{d - s}}$ be defined as before. The following risk bound holds for the Dyadic CART estimator $\hat{\theta}^{(0)}_{\rdp,\lambda} $ with $\lambda \geq C \sigma^2 \log N$ where $C$ is an absolute constant.
	\begin{equation*}
		\E_{\theta^*} \|\hat{\theta}^{(0)}_{\rdp,\lambda} - \theta^*\|^2 \leq C\:n^{d - s}\:\min\{ \sigma_{S} V^*_{S} \log N, \sigma_{S}^2 \log N, \big((V^*_{S})^{2} + \sigma_{S}^2\big)\}
	\end{equation*}
	In the case $|S| = 1$ we have
	\begin{equation*}
		\E_{\theta^*} \|\hat{\theta}^{(0)}_{\rdp,\lambda} - \theta^*\|^2 \leq C\:n^{d - 1}\: \min\{(\sigma^2_{S} V_{S} \log N)^{2/3} n^{1/3}, n\:\sigma^2_{S} \log N, n\:V^2_{S} + \sigma_{S}^2 \log N\}
	\end{equation*}
\end{theorem}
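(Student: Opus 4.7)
The plan is to apply the oracle inequality of Theorem~\ref{thm:adapt} to a carefully constructed piecewise constant approximation of $\theta^*$ on a recursive dyadic partition, while exploiting the fact that $\theta^*$ depends only on the coordinates in $S$. Since $\theta^*(x) = \theta^*_S(x_S)$ for some $\theta^*_S \in \R^{L_{s,n}}$ with $\TV(\theta^*_S) = V^*_S$, any RDP $\Pi_S$ of $L_{s,n}$ lifts to an RDP of $L_{d,n}$ by performing all dyadic splits along coordinates in $S$ alone. Any piecewise constant $\theta_S$ on $\Pi_S$ then lifts to a piecewise constant $\theta$ on $L_{d,n}$ with $\|\theta - \theta^*\|^2 = n^{d-s}\|\theta_S - \theta^*_S\|^2$ and $k^{(0)}_{\rdp}(\theta) \leq |\Pi_S|$. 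Plugging this into Theorem~\ref{thm:adapt} with $\delta = 1/2$ and $\lambda = C\sigma^2\log N$ reduces the problem to bounding
\begin{equation*}
\inf_{\Pi_S,\,\theta_S}\bigl[n^{d-s}\|\theta_S - \theta^*_S\|^2 + \sigma^2\log N \cdot |\Pi_S|\bigr]
\end{equation*}
by $C n^{d-s}$ times the stated minimum, up to the additive $C\sigma^2$ term.

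The technical crux is to produce, for every integer $k \geq 1$, an RDP $\Pi_S^{(k)}$ of $L_{s,n}$ with $|\Pi_S^{(k)}| \leq C k$ and a piecewise constant $\theta_S^{(k)}$ on it satisfying $\|\theta_S^{(k)} - \theta^*_S\|^2 \leq C (V^*_S)^2 \log N/k$. This is precisely the content of Proposition~\ref{prop:division}, whose natural proof is a greedy recursive dyadic partitioning: starting from $L_{s,n}$, recursively dyadically split any current rectangle $R$ whose squared error of approximation by its local mean $m_R$ exceeds a threshold $\tau > 0$. The discrete Gagliardo-Sobolev-Nirenberg inequality of Proposition~\ref{prop:gagliardo} controls the local error $\|\theta^*_S|_R - m_R\|^2$ on each rectangle $R$ by a suitable power of $\TV(\theta^*_S|_R)$, while subadditivity of local total variation across a partition bounds the number of leaves generated by the greedy procedure in terms of $V^*_S$ and $\tau$. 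Choosing $\tau$ of order $V^*_S/k$ produces at most $Ck$ leaves carrying total squared error $O((V^*_S)^2 \log N/k)$, the extra $\log N$ reflecting the inefficiency of forced mid-point splits in the dyadic setting.

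Given this approximation bound, the balancing is routine. In the nontrivial regime, setting $k = \lceil V^*_S/\sigma_S \rceil$ equates $n^{d-s}(V^*_S)^2 \log N/k$ with $\sigma^2 \log N \cdot k = n^{d-s}\sigma_S^2 \log N \cdot k$ and yields the leading bound $C n^{d-s} \sigma_S V^*_S \log N$. The other two terms in the min correspond to boundary regimes: taking $k = 1$ with $\theta_S$ equal to the global mean of $\theta^*_S$ gives $n^{d-s}[(V^*_S)^2 + \sigma_S^2]$ once $(V^*_S)^2$ is controlled by Proposition~\ref{prop:gagliardo} applied to the whole lattice, while choosing a very fine partition with $k = O(n^s)$ produces the middle term $n^{d-s} \sigma_S^2 \log N$ after the identity $\sigma^2 = n^{d-s}\sigma_S^2$. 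The case $|S| = 1$ follows the same template but uses the elementary univariate bound $\|\theta^*_S|_R - m_R\|^2 \leq |R|\,\TV(\theta^*_S|_R)^2$ in place of GSN; the different scaling with $|R|$ of this local error produces, after balancing, the $(\sigma_S^2 V_S \log N)^{2/3} n^{1/3}$ exponent stated for $|S| = 1$.

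The main obstacle is the approximation step (Proposition~\ref{prop:division}): converting the \emph{local} GSN bound on an individual rectangle into a \emph{global} control on the number of leaves of the greedy RDP. Dyadic splits are rigid, so a rectangle with total variation concentrated in a small subregion can force many cascading splits elsewhere, and it is precisely this rigidity that forces the additional $\log N$ factor in the approximation error. A second delicate point is the dimension dependence: for $s = 2$ the discrete GSN inequality is essentially dimensionless and gives $\|f - m\|_2^2 \leq C\,\TV(f)^2$, but for $s \geq 3$ one only has $\|f - m\|_{s/(s-1)} \leq C\,\TV(f)$, so Proposition~\ref{prop:gagliardo} must incorporate an interpolation/H\"older step to convert this into an $L^2$ bound on each rectangle with the correct volume scaling, so that the errors assemble correctly across the levels of the dyadic tree.
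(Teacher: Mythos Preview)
Your proposal is correct and follows essentially the paper's route: reduce to the $s$-dimensional array $\theta^*_S$, apply the greedy recursive dyadic partitioning of Proposition~\ref{prop:division} together with the discrete Gagliardo--Nirenberg inequality (Proposition~\ref{prop:gagliardo}) to control local errors, optimize over the free parameter, and handle the boundary regimes and the $|S|=1$ case (the latter via Lemma~\ref{lem:1dapprox}) exactly as you outline. One minor difference worth noting: the paper's greedy scheme thresholds on the local total variation $\TV(\theta^*_S|_R)\le\delta$ rather than on the squared approximation error, which is slightly cleaner because subadditivity of $\TV$ then bounds the leaf count directly, and it cycles the splitting coordinate so that every rectangle has aspect ratio at most $2$---this is precisely what keeps the GSN constant uniform across rectangles, a point your proposal gestures at but does not make explicit.
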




We think the following is an instructive way to read off the implications of the above theorem. Let us consider $d \geq 2$ and the $S = [d]$ case. We will only look at the nontrivial regime even though Dyadic CART remains minimax rate optimal, up to log factors, even in the trivial regimes. In this case, $MSE(\hat{\theta}^{(0)}_{\rdp,\lambda},\theta^*) = \tilde{O}(\frac{\sigma V^*}{N})$ which is the minimax rate in the nontrivial regime as given by Theorem~\ref{thm:tvadapmlb}. Now, for many natural instances of $\theta^*$, the quantity $V^* = O(n^{d - 1})$; for instance if $\theta^*$ are evaluations of a differentiable function on the grid. This $O(n^{d - 1})$ scaling was termed as the \textit{canonical scaling} for this problem by~\cite{sadhanala2016total}. Therefore, under this canonical scaling for $V^*$ we have $$MSE(\hat{\theta}^{(0)}_{\rdp,\lambda},\theta^*) = \tilde{O}(\frac{\sigma}{n}) = \tilde{O}(\frac{\sigma}{N^{1/d}}).$$

Now let us consider $d \geq 2$ and a general subset $S \subset [d].$ In the nontrivial regime, by Theorem~\ref{thm:dcadap} we have $MSE(\hat{\theta}^{(0)}_{\rdp,\lambda},\theta^*) = \tilde{O}(\frac{\sigma_S V^*_{S}}{n^{s}})$ which is also the minimax rate over the parameter space $K_{d,n}^{S}.$ Now, $V^*_{S} = O(n^{s - 1})$ under the canonical scaling in this case. Thus, under this canonical scaling we can write $$MSE(\hat{\theta}^{(0)}_{\rdp,\lambda},\theta^*) = \tilde{O}(\frac{\sigma_S}{n}) = \tilde{O}(\frac{\sigma_S}{N^{1/d}}).$$ 
This is very similar to the last display except $\sigma$ has been replaced by $\sigma_{S}$, the \textit{actual standard deviation} of this problem. The point is, the Dyadic CART attains this rate without knowing $S$. The case when $|S| = 1$ can be read off in a similar way.


\section{\textbf{Results for Univariate Functions of Bounded Variation of Higher Orders}}\label{Sec:uni}
In this section, we show another application of Theorem~\ref{thm:adapt} to a family of univariate function classes which have been of recent interest. The results in this section would be for the univariate Dyadic Cart estimator of some order $r \geq 0.$ As mentioned in Section~\ref{Sec:intro}, TV denoising in the 1D setting has been studied as part of a general family of estimators which penalize discrete derivatives of different orders. These estimators have been studied in~\cite{mammen1997locally} ,~\cite{steidl2006splines},~\cite{tibshirani2014adaptive},~\cite{guntuboyina2020adaptive} and~\cite{kim2009ell_1} who coined the name trend filtering. 


To define the trend filtering estimators here, we first need to define variation of all orders. For a vector $\theta \in \R^n,$ let us define $D^{(0)}(\theta) = \theta, D^{(1)}(\theta) = (\theta_2 - \theta_1,\dots,\theta_n - \theta_{n - 1})$ and $D^{(r)}(\theta)$, for $r \geq 2$, is recursively defined as $D^{(r)}(\theta) = D^{(1)}(D^{(r - 1)}(\theta)).$ Note that $D^{(r)}(\theta) \in \R^{n - r}.$ For simplicity, we denote the operator $D^{(1)}$ by $D.$ For any positive integer $r \geq 1$, let us also define the $r$ th order variation of a vector $\theta$ as follows:
\begin{equation}
V^{(r)}(\theta) = n^{r - 1} |D^{(r)}(\theta)|_{1}
\end{equation}
where $|.|_1$ denotes the usual $\ell_1$ norm of a vector. Note that $V^{(1)}(\theta)$ is the usual total variation of a vector as defined in~\eqref{eq:TVdef}.
\begin{remark}
	The $n^{r - 1}$ term in the above definition is a normalizing factor and is written following the convention adopted in~\cite{guntuboyina2020adaptive}. If we think of $\theta$ as evaluations of a $r$ times differentiable function $f:[0,1] \rightarrow \R$ on the grid $(1/n,2/n\dots,n/n)$ then the Reimann approximation to the integral $\int_{[0,1]} f^{(r)}(t) dt$ is precisely equal to $V^{(r)}(\theta).$ Here $f^{(r)}$ denotes the $r$th derivative of $f.$ Thus, for natural instances of $\theta$, the reader can imagine that $V^{(r)} = O(1).$ 
\end{remark}

Let us now define the following class of sequences for any  integer $r \geq 1$,
\begin{equation}
\mathcal{BV}^{(r)}_{n}(V) = \{\theta \in \R^n: V^{(r)}(\theta) \leq V\}.
\end{equation}

Trend Filtering (of order $r \geq 1$) estimators are defined as follows for a tuning parameter $\lambda > 0$:
\begin{equation*}
	\hat{\theta}^{(r)}_{tf,\lambda} = \argmin_{\theta \in \R^n} \big(\|y - \theta\|^2 + \lambda  V^{(r)}(\theta)\big).
\end{equation*}
Thus, Trend Filtering is penalized least squares where the penalty is proportional to the $\ell_1$ norm of $D^{(r)}(\theta).$ As opposed to Trend Filtering, here we will study the univariate Dyadic CART estimator (of order $r - 1$) which penalizes something similar to the $\ell_0$ norm of $D^{(r)}(\theta).$ We will show that Dyadic CART (of order $r - 1$) compares favourably with Trend Filtering (of order $r$) in the following aspects:

\begin{itemize}
	
	\item For a given constant $V > 0$ and $r \geq 1$, $n^{-2r/2r + 1}$ rate is known to be the minimax rate of estimation over the space $\mathcal{BV}^{(r)}_{n}(V)$; (see e.g,~\cite{donoho1994ideal}). A standard terminology in this field terms this $n^{-2r/2r + 1}$ rate as the \textit{slow rate}. It is also known that a well tuned Trend Filtering estimator is minimax rate optimal over the parameter space $\mathcal{BV}^{(r)}_{n}(V)$ and thus attains the slow rate. This result has been shown in~\cite{tibshirani2014adaptive} and~\cite{wang2014falling} building on earlier results by~\cite{mammen1997locally}. In Theorem~\ref{thm:slowrate} we show that Dyadic CART estimator of order $r - 1$ is also near minimax rate optimal (up to a log factor) over the space $\mathcal{BV}^{(r)}_{n}(V)$ and attains the slow rate.

	
	\item It is also known that an ideally tuned Trend Filtering (of order $r$) estimator can adapt to $\|D^{r}(\theta)\|_0$, the number of jumps in the $r$ th order differences, \textit{under some assumptions on $\theta^*$}. Such a result has been shown in~\cite{guntuboyina2020adaptive} and~\cite{van2019prediction}. In this case, the Trend Filtering estimator of order $r$ attains the $\tilde{O}(\|D^{(r)}(\theta)\|_0/n)$ rate. Standard terminology in this field terms this as the \textit{fast rate}. In Theorem~\ref{thm:fastrate} we show that Dyadic CART estimator of order $r - 1$ attains the fast rate \textit{without any assumptions} on $\theta^*.$

	\item If one desires the fast rate for both piecewise constant and piecewise linear functions, there is no way to attain this by a single Trend Filtering Estimator. One needs to use Trend Filtering of order $r = 2$ to attain fast rates for piecewise linear functions and $r = 1$ for piecewise constant functions respectively. In contrast, Dyadic Cart of order $1$ attains the fast rate for both piecewise linear and piecewise constant functions. This is because by our definition, a piecewise constant function is also piecewise linear. In general, Dyadic Cart of order $r$ attains fast rate for a piecewise polynomial of degree $r$ where each piece has degree $\leq r.$

	\item To the best of our knowledge, different tuning parameters for Trend Filtering are needed depending on whether slow rate or fast rate is desired. Thus, technically the estimators giving the slow rate and the fast rate are different. In contrast, the same tuning parameter gives both the slow rate and the fast rate for Dyadic CART.

	\item The univariate Dyadic CART estimator of order $r \geq 0$ can be computed in linear $O(N)$ time. Although Trend Filtering estimators are efficiently computable by convex optimization, we are not aware of a provably $O(N)$ run time bound (for general $r \geq 1$) on its computational complexity.
\end{itemize}

\begin{remark}
	By Theorem~\ref{thm:adapt}, the univariate ORT would also satisfy all the risk bounds that we prove for univariate Dyadic Cart in this section. Recall that for $r = 0$, the ORT is precisely the same as the fully penalized least squares estimator $\hat{\theta}_{\all,\lambda}$ studied in~\cite{boysen2009consistencies}. This is because $\mathcal{P}_{\hier,n,1}$ coincides with $\mathcal{P}_{\all,1,n}$ as all univariate partitions are hierarchical. Since the computational complexity of univariate Dyadic Cart is $O(N)$ and of univariate $ORT$ is $O(N^3)$ we focus on univariate Dyadic Cart.
\end{remark}

\subsubsection{Risk Bounds for Univariate Dyadic CART of all orders}

We start with the bound of $n^{-2r/(2r + 1)}$ for the risk of Dyadic CART of order $r - 1$ for the parameter space $\mathcal{BV}^{(r)}_{n}(V).$ We also explicitly state the dependence of the bound on $V$ and $\sigma.$ 

\begin{theorem}[Slow Rate for Dyadic CART]\label{thm:slowrate}
	Fix a positive integer $r.$ Let $V^{r}(\theta^*) = V.$ For the same constant $C$ as in Theorem~\ref{thm:adapt}, if we set $\lambda \geq C \sigma^2 \log n$ we have
	\begin{equation}
	MSE(\hat{\theta}^{(r - 1)}_{\rdp,\lambda},\theta^*) \leq C_r \big(\frac{\sigma^2 V^{1/r} \log n}{n}\big)^{2r/(2r + 1)} + C_r \sigma^2 \frac{\log n}{n}	
	\end{equation}
	where $C_r$ is an absolute constant only depending on $r.$
\end{theorem}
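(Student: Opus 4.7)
The plan is to combine Theorem~\ref{thm:adapt} (with $a=\rdp$) with an approximation-theoretic bound that, for each integer $k$, supplies a recursive dyadic partition of $[n]$ with at most $Ck$ pieces on which the best piecewise polynomial approximation of degree $r-1$ to $\theta^*$ has small $\ell^2$ error. With $\lambda = C\sigma^2\log n$ and $\delta$ fixed, the oracle inequality states
\begin{equation*}
\E\|\hat{\theta}^{(r-1)}_{\rdp,\lambda}-\theta^*\|^2 \leq C\bigl\{\|\theta-\theta^*\|^2 + \sigma^2\,k^{(r-1)}_{\rdp}(\theta)\log n\bigr\} + C\sigma^2
\end{equation*}
for every $\theta$, so all the remaining work lies on the approximation side.

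The single-piece ingredient will be Lemma~\ref{lem:approxpoly}, a discrete Sobolev-type inequality: on an interval $I$ of $m$ consecutive indices, the best degree-$(r-1)$ polynomial approximation to $\theta^*_I$ has squared $\ell^2$ error at most $C_r\, m^{2r-1}\,n^{-(2r-2)}\,V_I^2$, where $V_I$ is the local $r$th order variation of $\theta^*$ on $I$ (so that $\sum_I V_I \lesssim V$ for any partition). By convexity, for a fixed total $V$ and $k$ pieces the quantity $\sum_I V_I^2$ is minimized in the balanced configuration $V_I = V/k$; when combined with equal piece lengths $m = n/k$, the sum of per-piece errors then evaluates to $C_r\, n V^2/k^{2r}$. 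The obstacle is that we are constrained to \emph{recursive dyadic} partitions, so we cannot just split wherever the variation is concentrated.

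This is where Proposition~\ref{prop:piecewise} will enter: I will invoke it to produce, for every integer $k\geq 1$, an RDP with $O(k)$ cells together with a piecewise degree-$(r-1)$ polynomial $\theta$ on it satisfying $\|\theta-\theta^*\|^2 \leq C_r\, nV^2/k^{2r}$. The intended construction is an adaptive dyadic splitter: recursively halve each interval until either its local $V_I$ falls below $V/k$ or its length falls below $n/k$, and take the best polynomial fit on each terminal leaf; a potential-function argument (every split either halves the interval length or sheds a $V/k$ chunk of variation) should then cap the total leaf count at $O(k)$. Verifying that such an adaptive RDP realizes the balanced target despite the rigidity of dyadic splitting is the main technical hurdle in the whole proof.

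Given such an approximant, substituting into the oracle inequality gives
\begin{equation*}
\MSE(\hat{\theta}^{(r-1)}_{\rdp,\lambda},\theta^*) \leq C_r\,\frac{V^2}{k^{2r}} + C_r\,\frac{\sigma^2 k\log n}{n} + C_r\,\frac{\sigma^2}{n}
\end{equation*}
for every $k\in\{1,\dots,n\}$. Minimizing the right-hand side by choosing $k\asymp (nV^2/(\sigma^2\log n))^{1/(2r+1)}$ (clipped to $[1,n]$) balances the first two terms and produces the advertised rate $C_r(\sigma^2 V^{1/r}\log n/n)^{2r/(2r+1)}$, with the boundary case $k=1$ contributing the additive $\sigma^2\log n/n$ term. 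This final step is routine arithmetic once Proposition~\ref{prop:piecewise} is in hand.
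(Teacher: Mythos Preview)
Your overall strategy---combine the oracle inequality of Theorem~\ref{thm:adapt} with Proposition~\ref{prop:piecewise} and then optimize---is exactly the paper's, and your reparametrization $k\leftrightarrow\delta^{-1/r}$ together with the final balancing step is correct once you have an approximant with $\|\theta-\theta^*\|^2\le C_r nV^2/k^{2r}$ and $k^{(r-1)}_{\rdp}(\theta)=O(k)$.

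The gap is in your sketch of the construction behind Proposition~\ref{prop:piecewise}. Your two-criterion stopping rule (stop when $V_I\le V/k$ \emph{or} $m\le n/k$) does not deliver the claimed $\ell^2$ error for $r\ge 2$. Consider a leaf that stops because $V_I\approx V/k$ while its length is still $m\approx n/2$. Lemma~\ref{lem:approxpoly} only gives $\ell_\infty$ error $C_r(m/n)^{r-1}V_I\approx C_rV/k$ on that leaf, so it alone contributes $\ell^2$ error $\approx m(V/k)^2\approx nV^2/k^2$---far larger than the target $nV^2/k^{2r}$ when $r\ge 2$. Your potential argument does bound the leaf \emph{count} by $O(k)$, but it does not control the per-leaf error; the two factors $m/n$ and $V_I$ have to be tied together.

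The paper's fix is to stop on the \emph{product}: set $\mathcal M(I)=|I|^{r-1}|D^{(r)}\theta_I|_1=(m/n)^{r-1}V_I$ and halve until $\mathcal M(I)\le V\delta$ with $\delta=k^{-r}$. This guarantees $\ell_\infty$ error $\le C_rV\delta=C_rVk^{-r}$ on \emph{every} leaf, hence total $\ell^2$ error $\le C_r nV^2k^{-2r}$. The leaf count then follows from a different argument: at depth $\ell$ (so $m=n2^{-\ell}$) splitting requires $V_I>V\delta\,2^{\ell(r-1)}$, so by subadditivity there are at most $\min\{\delta^{-1}2^{-\ell(r-1)},\,2^\ell\}$ splits at that depth; summing over $\ell$ gives two geometric series meeting at $2^{\ell^*}\asymp\delta^{-1/r}$ and totaling $O(\delta^{-1/r})=O(k)$. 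With this corrected stopping rule, the remainder of your argument goes through verbatim.
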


\begin{remark}
	The proof of the above theorem is done in Section~\ref{Sec:proofs} (in the supplementary file). The proof proceeds by approximating any $\theta \in\mathcal{BV}^{(r)}_{n}(V)$ with a vector $\theta^{'}$ which is piecewise polynomial of degree $r - 1$ with an appropriate bound on its number of pieces and then invoking Theorem~\ref{thm:adapt}.
\end{remark}

\begin{remark}
	The above theorem shows that the univariate Dyadic CART estimator of order $r - 1$ is minimax rate optimal up to the $(\log n)^{2r/(2r + 1)}$ factor. The dependence of $V$ is also optimal in the above bound. Up to the log factor, this upper bound matches the bound already known for the Trend Filtering estimator of order $r$; (see e.g, ~\cite{tibshirani2014adaptive}). 
\end{remark}

Our next bound shows that the univariate Dyadic CART estimator achieves our goal of attaining the oracle risk for piecewise polynomial signals. 

\begin{theorem}[Fast Rates for Dyadic CART]\label{thm:fastrate}
	Fix a positive integer $r$ and $0 < \delta < 1.$ Let $V^{r}(\theta^*) = V.$ For the same constant $C$ as in Theorem~\ref{thm:adapt}, if we set $\lambda \geq C \sigma^2 \log n$ we have
	\begin{equation*}
		\E \|\hat{\theta}^{(r)}_{\rdp,\lambda} - \theta^*\|^2 \leq \inf_{\theta \in \R^N} \big[\frac{(1 + \delta)}{(1 - \delta)}\:\|\theta - \theta^*\|^2 + \frac{\lambda C_r}{1 - \delta}\:k^{(r)}_{\all}(\theta)\:\log (\frac{en}{k^{(r)}_{\all}(\theta)})\big] + C \frac{\sigma^2}{\delta\:(1 - \delta)}	
	\end{equation*}
	where $C_r$ is an absolute constant only depending on $r.$ As a corollary we can conclude that 
	\begin{equation*}
		MSE(\hat{\theta}^{(r)}_{\rdp,\lambda},\theta^*) \leq C_r \sigma^2 \frac{k^{(r)}_{\all}(\theta^*) \log n \log (\frac{en}{k^{(r)}_{\all}(\theta^*)})}{n}.
	\end{equation*} 
\end{theorem}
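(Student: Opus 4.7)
The plan is to chain Theorem~\ref{thm:adapt} (specialized to $d=1$ and $a=\rdp$, which is Dyadic CART of order $r$) with the univariate partition refinement bound stated in Proposition~\ref{prop:dyadicref}. The first ingredient gives an oracle inequality expressed in terms of $k^{(r)}_{\rdp}(\theta)$, and the second ingredient lets us upgrade this into a bound involving $k^{(r)}_{\all}(\theta)$, which is the complexity measure we ultimately want (since an arbitrary piecewise polynomial structure on the truth need not be recursively dyadic).

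Concretely, I would first apply Theorem~\ref{thm:adapt} with $d=1$ and $a=\rdp$, which, for $\lambda \geq C \sigma^2 \log n$ (using $K_{r,1}=1$), yields for every $\theta \in \R^n$
\begin{equation*}
\E \|\hat{\theta}^{(r)}_{\rdp,\lambda}-\theta^*\|^2 \;\leq\; \frac{1+\delta}{1-\delta}\,\|\theta-\theta^*\|^2 + \frac{\lambda}{1-\delta}\,k^{(r)}_{\rdp}(\theta) + C\,\frac{\sigma^2}{\delta(1-\delta)}.
\end{equation*}
Next, I would invoke the refinement inequality \eqref{eq:refine1} from Proposition~\ref{prop:dyadicref}, namely
\begin{equation*}
k^{(r)}_{\rdp}(\theta) \;\leq\; C\,k^{(r)}_{\all}(\theta)\,\log\!\Bigl(\frac{en}{k^{(r)}_{\all}(\theta)}\Bigr),
\end{equation*}
which is valid for every $\theta$ and every $r \geq 0$ because a refinement of an optimal piecewise-polynomial partition still certifies $\theta$ as piecewise polynomial on the refined partition. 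Substituting this bound into the previous display and taking the infimum over $\theta \in \R^n$ produces exactly the first statement of Theorem~\ref{thm:fastrate} (with $C_r$ absorbing the universal constant from Proposition~\ref{prop:dyadicref}).

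The MSE corollary then follows by choosing $\theta = \theta^*$ in the infimum so that the approximation error $\|\theta - \theta^*\|^2$ vanishes, plugging in $\lambda = C\sigma^2 \log n$, fixing $\delta$ to a numerical constant (e.g.\ $\delta = 1/2$), and dividing by $N = n$ to convert the squared-error bound into an MSE bound.

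The main obstacle is not in this combination step at all, which is essentially mechanical. All the real work is loaded into Proposition~\ref{prop:dyadicref}: producing a recursive dyadic refinement of an arbitrary partition of $[n]$ into $k$ contiguous intervals while inflating the cardinality only by a $\log(en/k)$ factor. That refinement is what bridges the gap between the hierarchical (and in the univariate case, trivial) partition structure of the truth and the stricter dyadic structure that Dyadic CART is allowed to use. Once that lemma is in hand, Theorem~\ref{thm:fastrate} is a one-line composition.
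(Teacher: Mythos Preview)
Your proposal is correct and matches the paper's own proof essentially line for line: apply Theorem~\ref{thm:adapt} with $d=1$, $a=\rdp$ to get the oracle bound in terms of $k^{(r)}_{\rdp}(\theta)$, then replace $k^{(r)}_{\rdp}(\theta)$ by $C\,k^{(r)}_{\all}(\theta)\log\bigl(en/k^{(r)}_{\all}(\theta)\bigr)$ via equation~\eqref{eq:refine1}, and take the infimum. The corollary is obtained exactly as you describe, by setting $\theta=\theta^*$, fixing $\delta$, and dividing by $n$.
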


\begin{proof}
	The proof follows directly from the risk bound for univariate Dyadic Cart  given in Theorem~\ref{thm:adapt} and applying equation~\eqref{eq:refine1} in Lemma~\ref{lem:1dpartbd} which says that $k^{(r)}_{\rdp}(\theta) \leq k^{(r)}_{\all}(\theta) \log (\frac{en}{k^{(r)}_{\all}(\theta)})$ for all vectors $\theta \in \R^n.$ 
\end{proof}

Let us now put our result in Theorem~\ref{thm:fastrate} in context. It says that in the $d = 1$ case, Dyadic CART achieves our goal of attaining MSE scaling like $\tilde{O}(k^{(r)}_{\all}(\theta^*)/n)$ (fast rate) for all $\theta^*.$ The Trend Filtering estimator, ideally tuned, is also capable of attaining this rate of convergence; (see Theorem $3.1$ in~\cite{van2019prediction} and Theorem $2.1$ in~\cite{guntuboyina2020adaptive}), under certain \textit{minimum length conditions} on $\theta^*.$  Let us discuss this issue now in more detail and compare Theorem~\ref{thm:fastrate} to the comparable result known for Trend Filtering. 
\subsubsection{Comparison of Fast Rates for Trend Filtering and Dyadic CART}\label{Sec:compare}
The fast rate results available for Trend Filtering give bounds on the MSE of the form $\tilde{O}(|D^{(r)}(\theta^*)|_0/n)$ where $|.|_0$ refers to the number of nonzero elements of a vector. Now for every vector $\theta \in R^n$, $|D^{(r)}(\theta)|_0 = k$ 
if and only if $\theta$ equals $(f(1/n), . . . , f(n/n))$ for a discrete spline function f that is made of $k + 1$ polynomials each of degree at most $r - 1$. Discrete splines are piecewise polynomials with regularity at the knots. They differ from the usual (continuous) splines in the form of the regularity condition at the knots: for splines, the regularity condition translates to
(higher order) derivatives of adjacent polynomials agreeing at the knots, while for discrete splines it translates to discrete differences of adjacent polynomials agreeing at the knots; see~\cite{mangasarian1971discrete} for details. This fact about the connection between $|D^{(r)}(\theta^*)|_0$ and discrete splines is standard (see e.g.,~\cite{steidl2006splines}) and a proof can be found in Proposition D.3 in~\cite{guntuboyina2020adaptive}.

The above discussion then directly implies for any $\theta \in \R^n$ and any $r \geq 1$,
\begin{equation}
k^{(r - 1)}_{\all}(\theta) \leq |D^{(r)}(\theta)|_0 + 1.
\end{equation}

Therefore any bound of the form $\tilde{O}(k^{(r - 1)}_{\all}(\theta^*)/n)$ is automatically also $\tilde{O}(|D^{(r)}(\theta^*)|_0/n).$ Thus, Theorem~\ref{thm:fastrate} implies that the Dyadic CART attains the fast rate whenever Trend Filtering does so. However, as we now argue, there is a class of functions for which the Dyadic CART attains the fast rate but Trend Filtering does not.

A key point is that a minimum length condition needs to hold for Trend Filtering to attain fast rates as explained in~\cite{guntuboyina2020adaptive}. For example, when $r = 1$, consider the sequence of vectors in $\R^n$ of the form $\theta^* = (0,\dots,0,1).$ Clearly, $\theta^*$ is piecewise constant with $2$ pieces. However, to the best of our knowledge, the Trend Filtering estimator (with the tuning choices proposed in the literature such as the ideal tuning for constrained version of Trend Filtering as in~\cite{guntuboyina2020adaptive}) will not attain a $\tilde{O}(1/n)$ rate for this sequence of $\theta^*$ since it needs the length of the constant pieces to be $O(n).$ However, the Dyadic CART estimator does not need any minimum length condition for Theorem~\ref{thm:fastrate} to hold and will attain the $\tilde{O}(1/n)$ rate for this sequence of $\theta^*.$

Now let us come to the case when $r \geq 2.$ It is known that Trend Filtering of order $r$ fits a discrete spline of degree $(r - 1)$. Thus, if the truth is piecewise polynomial with small number of pieces but it does not satisfy regularity conditions such as being a discrete spline, then Trend Filtering cannot estimate well. The reason is that Trend Filtering can only fit discrete splines. On the other hand, as long as the truth is piecewise polynomial with not too many pieces, Dyadic CART does not need the regularity conditions to be satisfied in order to perform well. Let us illustrate this with a simple example in the case when $r = 2.$ Similar phenomena is true for higher $r.$

Let's consider a discontinuous piecewise linear function $f^*:[0,1] \rightarrow \R$ defined as follows:
\begin{equation*}
	f^*(x) =
	\begin{cases}
		x, & \text{for} \:\:x \leq 1/2 \\
		2x & \text{for} \:\:1/2 < x \leq 1
	\end{cases}
\end{equation*}
Let $\theta^* \in \R^n$ such that $\theta^* = (f(1/n),\dots,f(n/n)).$ Clearly, $k^{(1)}_{\all}(\theta^*) = 2$ and by Theorem~\ref{thm:fastrate}, the Dyadic CART estimator of order $1$ attains the $\tilde{O}(1/n)$ rate for this sequence of $\theta^*.$ If we check the vector $D^{(1)}(\theta^*)$ it is of the form $(a,\dots,a,b,c,\dots,c).$ It is piecewise constant with three pieces. However, it does not satisfy the minimum length condition as the middle piece has length only $1$ and not $O(n).$ Thus, the Trend Filtering estimator of order $2$ won't attain the fast rate for such a sequence of $\theta^*.$ In fact, it can be shown that the Trend Filtering estimator won't even be able to attain the slow rate and would be inconsistent for such a sequence of $\theta^*$ simply because Trend Filtering can only fit discrete splines.

Another point worth reiterating is that to the best of our knowledge, the results for Trend Filtering say that the tuning parameter needs to be set differently depending on whether one wants to obtain the slow rates or the fast rates; see~\cite{guntuboyina2020adaptive} and~\cite{van2019prediction}. In the case of Dyadic CART, both Theorem~\ref{thm:slowrate} and Theorem~\ref{thm:fastrate} hold under the choice of the same tuning parameter. Moreover, Theorem~\ref{thm:fastrate} says that fast rates for Dyadic CART of order $r - 1$ hold whenever the true signal is piecewise polynomial with few pieces and each polynomial can have degree ranging from $0$ to $r - 1.$ This is because for any vector $\theta \in \R^n$, the complexity measure $k^{(r)}(\theta)$ is non increasing in $r.$ This means, if we use Dyadic Cart of order $2$, fast rates are guaranteed for piecewise quadratic, piecewise linear and piecewise constant signals. However, the same is not true for Trend Filtering where the order $r$ needs to be set to be $3,2,1$ depending on whether we want fast rates for piecewise quadratic or linear or constant signals respectively. Among several advantages there seems to be only one disadvantage for Dyadic CART versus Trend Filtering. It is the presence of extra log factors in the risk bounds. All in all, our results indicate that Univariate Dyadic CART may enjoy certain advantages over Trend Filtering in both the statistical and computational aspects.

\begin{remark}
	It should be remarked here that wavelet shrinkage methods with appropriate tuning methods can also attain the slow and fast rates as shown in~\cite{donoho1998minimax},~\cite{donoho1994ideal}. Wavelet shrinkage method can also be computed in $O(n)$ time. However, as is well known, wavelet methods require $n$ to be a power of $2$ and often there are boundary effects that need to be addressed for the fitted function. Univariate Dyadic CART seems related to wavelet shrinkage as both arise from dyadic thinking but they are different estimators. The way Dyadic CART has been defined in this article, $n$ does not need to be a power of $2$ and no boundary effects appear for Dyadic CART. In any case, our point here is not to compare Dyadic CART with wavelet shrinkage but to demonstrate the efficacy of Dyadic CART in fitting piecewise polynomials. 
\end{remark}


\section{Simulations}\label{sec:sim}
In this section, we present numerical evidence for our theoretical results. In our simulations we generated data from ground truths $\theta^*$ with certain size and did monte carlo repetitions to estimate the MSE. We also fitted a least squares line to log MSE versus $\log n$. This slope is supposed to give us some indication about the exponent of $N$ in the rate of convergence of the MSE to 0. To set the tuning parameter $\lambda$, we did not do very systematic optimization. Rather, we made a choice which gave us reasonable results. To implement ORT and Dyadic CART, we wrote our own code in R. Our codes are very basic and it is likely that the run times can be speeded up by more efficient implementations. All our simulations are completely reproducible and our codes are available on request.

\subsection{Simulation for two dimensional ORT}
We take a ground truth $\theta^*$ of size $n \times n$ which is piecewise constant on a nonhierarchical partition as shown in Figure~\ref{fig2}.
\begin{figure}
	\begin{center}
		\includegraphics[scale=0.75]{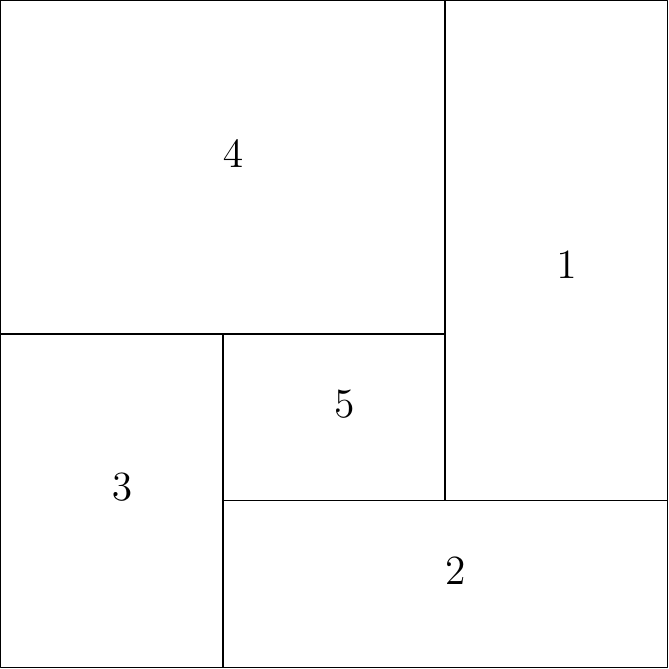} 
	\end{center}
	\caption{\textit{Figure depicts the ground truth matrix which is piecewise constant on a non hierarchical partition.}} 
	\label{fig2}
\end{figure}
We varied $n$ in increments of $5$ going from $30$ to $50$. We generated data $y$ by adding mean $0$ Gaussian noise with standard deviation $0.1$ and then applied ORT (of order $0$) to $y.$ We replicated this experiment $50$ times for each $n.$ We set the tuning parameter $\lambda$ to be increasing from $0.1$ to $0.18$ in increments of $0.02.$ 

For the sake of comparison, we also implemented the constrained version of two dimensional total variation denoising with ideal 
tuning (i.e. setting the constraint to be equal to the actual total variation (TVD) of the truth $\theta^*$). In the low $\sigma$ limit, it can be proved that this ideally tuned constrained 
estimator is better than the corresponding penalized estimator for 
every deterministic choice of the tuning parameter. 
This follows from the results of~\cite{oymak2013sharp} as described in Section 5.2 in~\cite{guntuboyina2020adaptive}. In this sense, we are comparing with the best possible version of TVD.

\begin{figure}
	\begin{center}
		\includegraphics[scale=0.25]{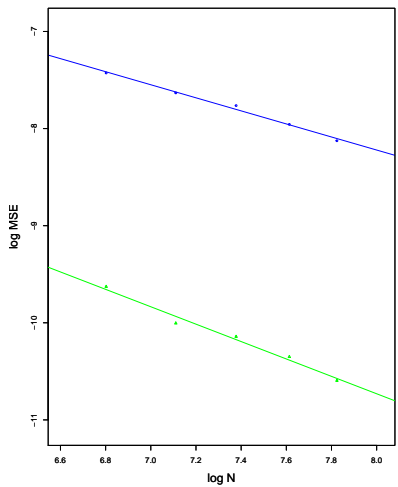} 
	\end{center}
	\caption{\textit{This is a $\log$ MSE vs $\log N$ plot for ORT (in green) and ideal TVD (in blue). The slope for ideal TVD comes out to be $-0.67$ and and the slope for ORT comes out to be $-0.9$}} 
	\label{fig3}
\end{figure}

From Figure~\ref{fig3} we see that ORT outperforms the ideal TVD. The slope of the least squares line is close to $-1$ for ORT which agrees with our $\log N/N$ rate predicted by our theory for ORT. The slope of the least squares line also agrees with the $\tilde{O}(N^{-3/4})$ rate predicted by Theorem $2.3$ in~\cite{chatterjee2019new}. It should be mentioned here that we did not take $n$ larger than $50$ because our implementation in R of ORT is slow (one run with $n = 50$ takes $8$ minutes). Recall that the computational complexity of ORT is $O(n^5)$ in this setting. We believe that more efficient implementations might make ORT practically computable for $n$ in the hundreds. We also chose the standard deviation $\sigma = 0.1$ because for larger $\sigma$ one needs larger sample size to see the rate of convergence of the MSE.


\subsection{Simulation for two dimensional Dyadic CART}
Here we compare the performance of Dyadic CART of order $0$ and the constrained version of two dimensional total variation denoising with ideal tuning. 

\subsubsection{Two piece matrix}
We consider the simplest piecewise constant matrix $\theta^* \in \R^{n \times n}$ matrix where $\theta^*(i,j) = \I\{j \leq n/2\}.$ Hence $\theta^*$ just takes two distinct values and the true rectangular partition is dyadic. Thus, this is expected to be a favourable case for Dyadic CART. We generated data by adding a matrix of independent standard normals. We took a sequence of $n$ geometrically increasing from $2^4$ to $2^9$. We chose $\lambda = \ell$ when $n = 2^{\ell}.$ 


Our simulations (see Figure~\ref{fig4}) suggest that in this case, the ideally tuned constrained TVD estimator is outperformed by Dyadic CART in terms of statistical risk. The least squares slope for TVD comes out to be $-0.71$. Theoretically, it is known that the rate of convergence for ideal constrained TVD is actually $N^{-3/4}$; see Theorem $2.3$ in~\cite{chatterjee2019new}. The least squares slope for Dyadic CART comes out to be $-1.26$. Of course, we expect the actual rate of convergence for Dyadic CART to be $\tilde{O}(N^{-1})$ in this case. The constrained TVD estimator was computed by the convex optimization software MOSEK (via the R package
Rmosek).  For $n = 2^9 = 512$, Dyadic CART was much faster to compute and there was a significant difference in the runtimes. Actually, we did not take $n$ larger than $2^9$ because the RMosek implementation of TVD was becoming too slow. However, with our implementation of Dyadic Cart, we could run it for sizes as large as $2^{15} \times 2^{15}.$ 

\subsubsection{Smooth Matrix}
We considered the matrix $\theta^* \in \R^{n \times n}$ matrix where $\theta^*(i,j) = \sin(i\:\pi/n) \sin(j\:\pi/n).$ We generated data by adding a matrix of independent standard normals. We again took a sequence of $n$ geometrically increasing from $2^4$ to $2^9$ and chose $\lambda = \ell$ when $n = 2^{\ell}.$ The ground truth here is a smooth matrix which is expected to favour TVD more than Dyadic CART. In this case, we saw (see Figure~\ref{fig4}) that the slopes of the least squares line came out to be around $-0.55$ for both Dyadic CART and TVD. Recall that $\tilde{O}(N^{-0.5})$ rate is the minimax rate for bounded variation functions. The ideally tuned TVD did have a slightly lower MSE than Dyadic CART for our choice of $\lambda$ in this example.

\begin{figure}
	\begin{center}
		\includegraphics[scale=0.25]{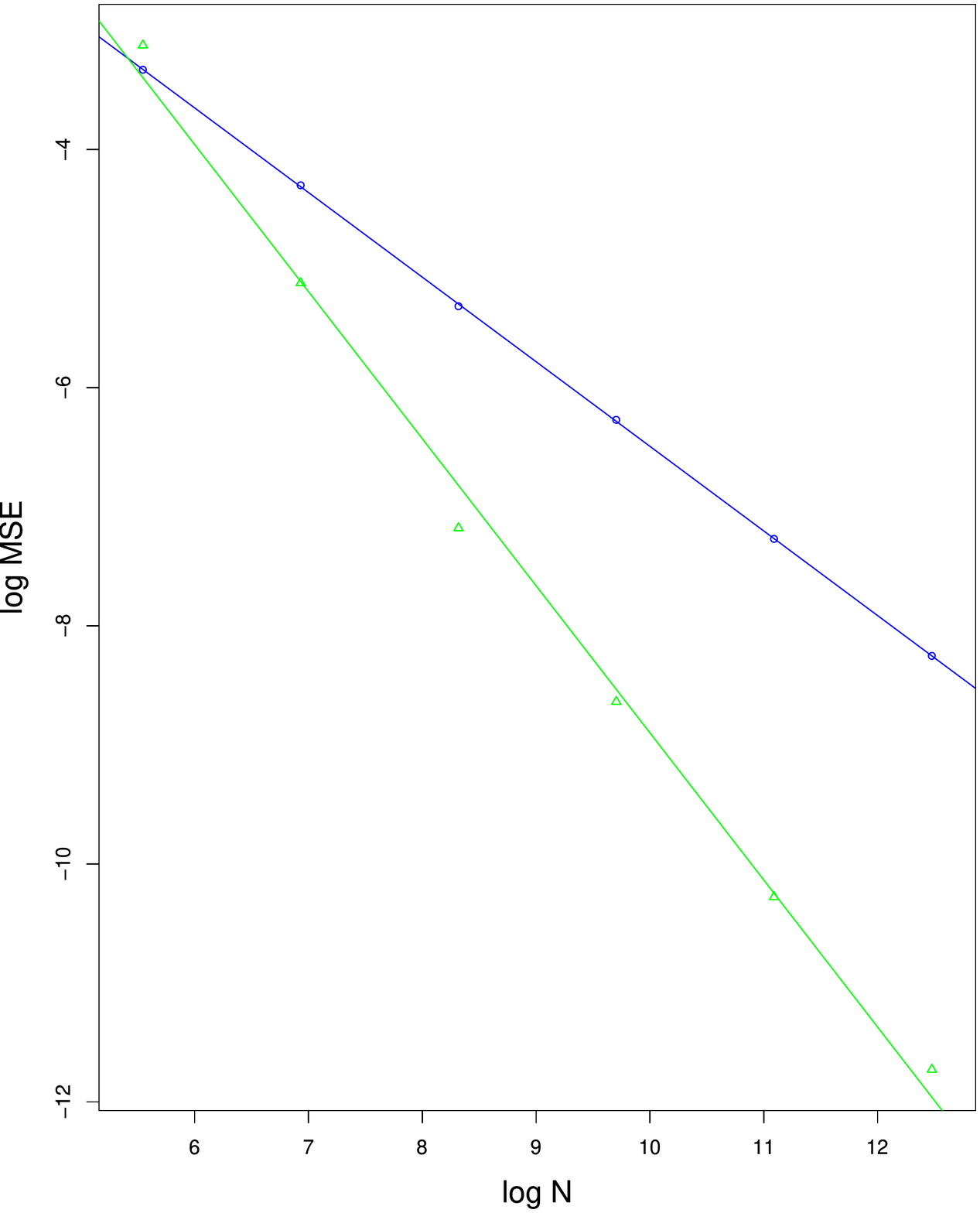}
		\includegraphics[scale=0.25]{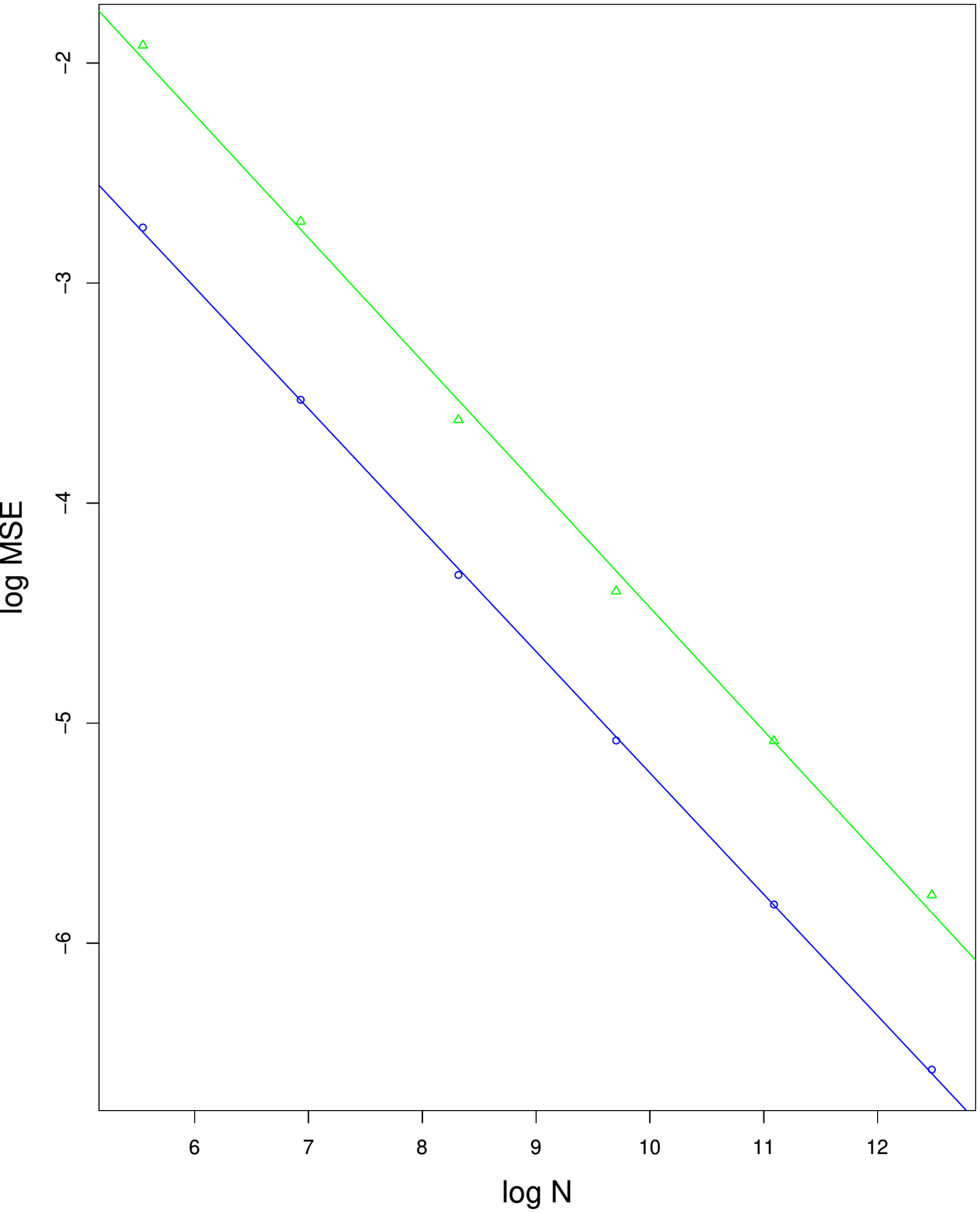} 
	\end{center}
	\caption{\textit{The two figures are $\log$ MSE vs $\log N$ plot for ideal TVD (in blue) and Dyadic CART (in green). For the figure on the left, the ground truth is piecewise constant with two pieces. The slopes came out to be $-0.71$ and $-1.23$ for ideal TVD and Dyadic CART respectively. For the figure on the right, the ground truth is a smooth bump function. The slopes came out to be $-0.55$ and $-0.56$ for ideal TVD and Dyadic CART respectively.}} 
	\label{fig4}
\end{figure}



\subsection{Simulation for univariate Dyadic Cart}
Here we compare the performance of univariate Dyadic CART of order $1$ and the constrained version of Trend Filtering (which fits piecewise linear functions) with ideal tuning. We consider a piecewise linear function $f$ given by
\begin{equation*}
	f(x) = -44 \max(0,x - 0.3) + 48 \max(0,x - 0.55) -56  \max(0,x - 0.8) + 0.28 x.
\end{equation*}
A similar function was considered in~\cite{guntuboyina2020adaptive} where the knots were at dyadic points $0.25,0.5,0.75$ respectively. We intentionally changed the knot points which makes the problem harder for Dyadic CART. We considered the ground truth $\theta^{*}$ to be evaluations of $f$ on a grid in $[0,1]$ with spacing $1/N.$ We then added standard Gaussian noise to generate data. We took a sequence of sample sizes $N$ geometrically increasing from $2^7$ to $2^{12}$ and chose $\lambda = \ell$ when $N = 2^{\ell}.$ 

\begin{figure}
	\begin{center}
		\includegraphics[scale=0.25]{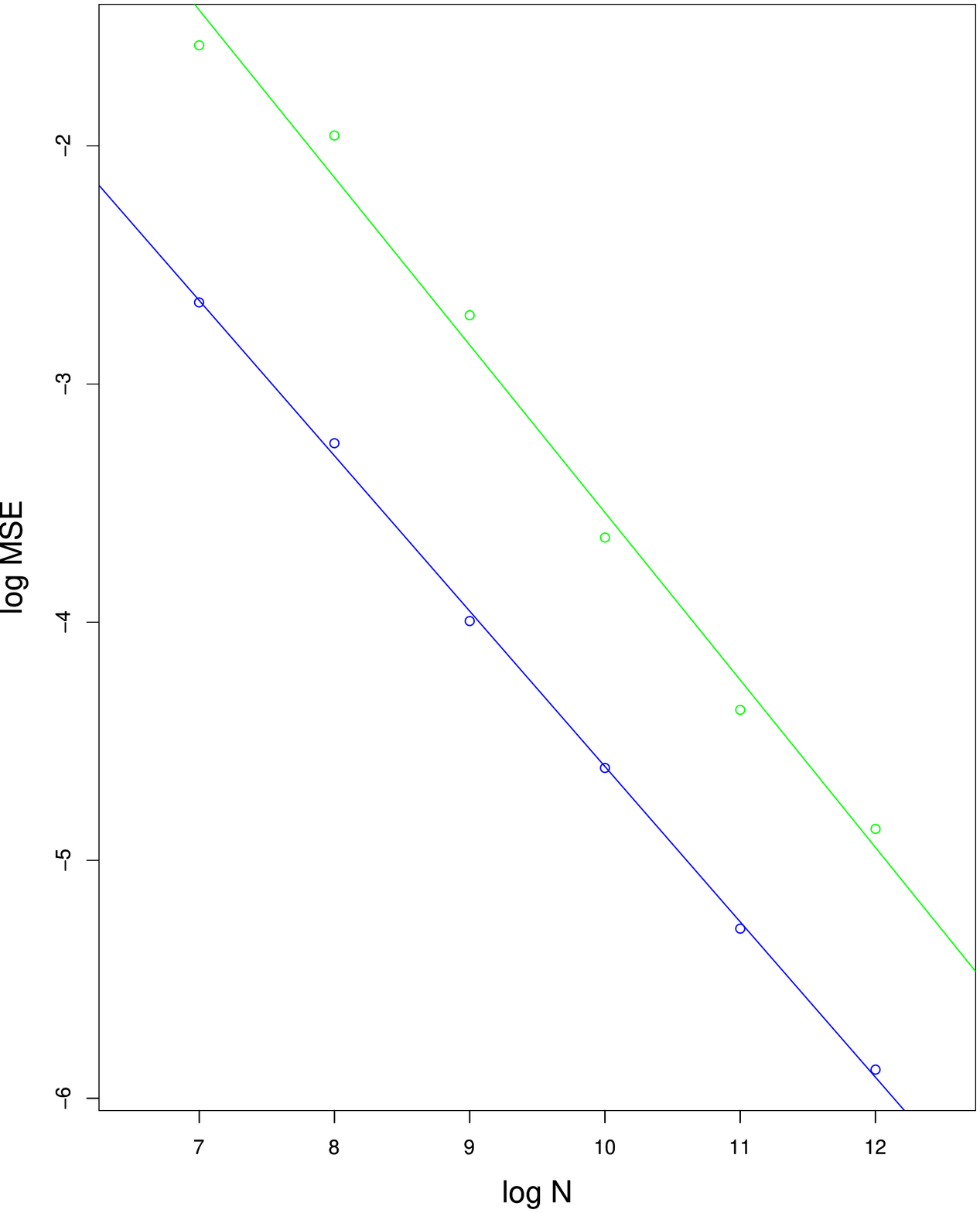}
		\includegraphics[scale=0.25]{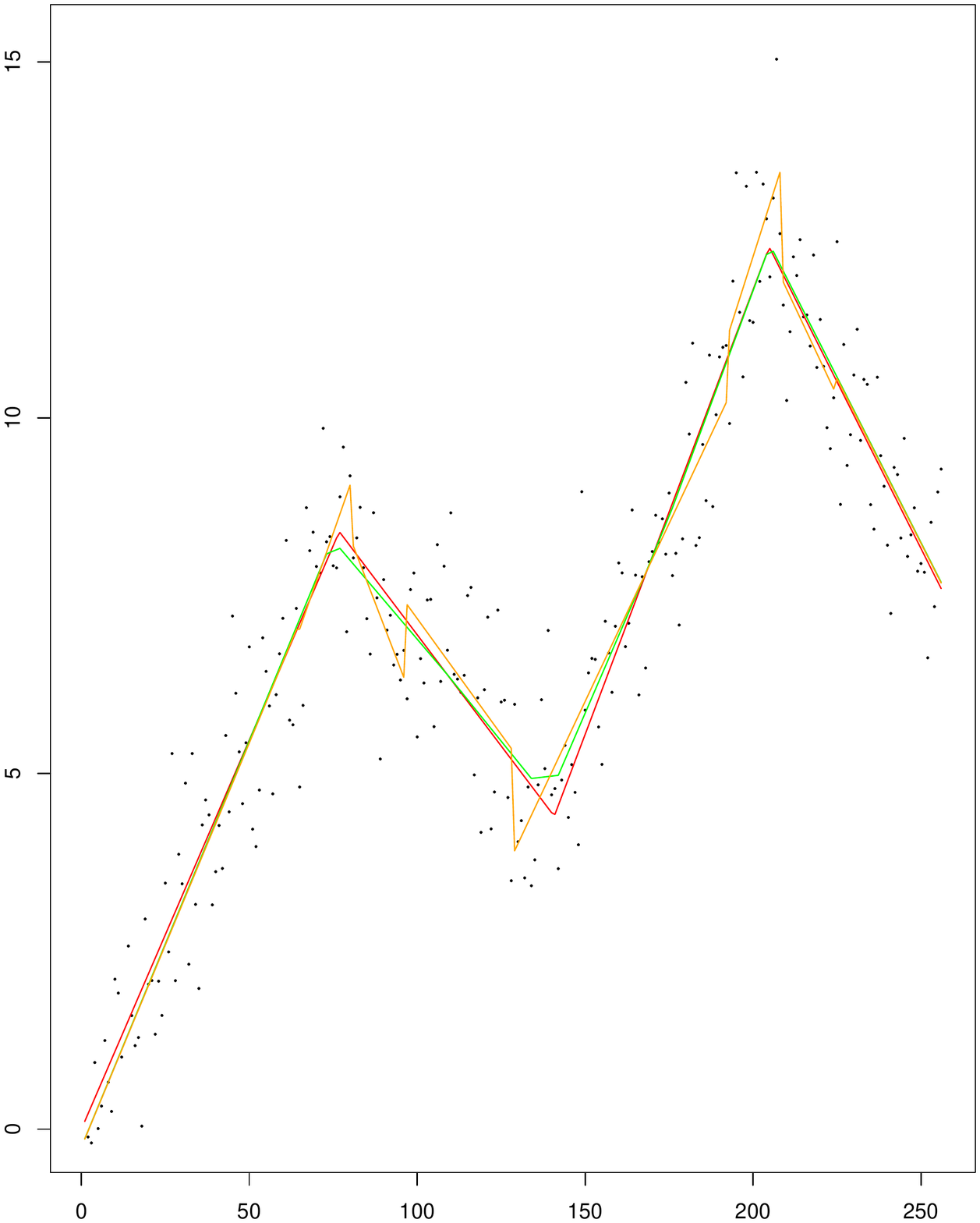} 
	\end{center}
	\caption{\textit{The figure on the left is $\log$ MSE vs $\log N$ (base $2$) plot for ideal Trend Filtering (in blue) and Dyadic CART (in green). The slopes came out to be $-0.65$ and $-0.70$ for ideal Trend Filtering and Dyadic Cart respectively. Figure on the right is an instance of our simulation with sample size $N = 256.$ The red piecewise linear curve is the ground truth. The green curve is the ideal Trend Filtering fit and the orange curve is the Dyadic CART fit with $\lambda = 8$. }} 
	\label{fig5}
\end{figure}

The slopes of the least squares lines came out to be $-0.65$ and $-0.7$ for Trend Filtering and Dyadic CART respectively (see Figure~\ref{fig5}). These slopes are a bit bigger than the theoretically expected rate $\tilde{O}(N^{-1})$ for both the estimators. This could be because of the inherent log factors. We observed that ideal Trend Filtering indeed has a lower MSE than Dyadic CART. Since the knots are at non dyadic points, fits from Dyadic CART are forced to make several knots near the true knot points. This effect is more pronounced for small sample sizes. For large sample sizes however, both the estimators give high quality fits. For a slightly worse fit than ideal Trend Filtering, the advantage of Dyadic CART is that it can be computed very fast. We have implemented Trend Filtering by using RMosek and again, we saw a significant difference in the running speeds when $N$ is large. The reason we did not take sample size larger than $2^{12}$ is again because the RMosek implementation of Trend Filtering became too slow.

\begin{remark}
	Recall that it is not necessary for the sample size to be a power of $2$ for defining and implementing Dyadic CART. We just need to adopt a convention for implementing a dyadic split of a rectangle. In our simulations, we have taken $N$ to be a power of two because writing the code becomes less messy. Also, we have compared our estimators to the ideal TVD/Trend Filtering. In practice, both estimators would be cross validated and one needs to make a comparison then as well.  
\end{remark}	


\section{Discussion}\label{Sec:discuss}
Here we discuss a couple of naturally related matters. 

\subsection{Implications for Shape Constrained Function Classes}
Since our  oracle risk bound in Theorem~\ref{thm:adapt} holds for all truths $\theta^*$, it is potentially applicable to other function classes as well. A similar oracle risk bound was used by~\cite{donoho1997cart} to demonstrate minimax rate optimality of Dyadic CART for some anisotropically smooth function classes. 
Since our focus here is on nonsmooth function classes we now discuss here some implications of our results for shape constrained function classes which has been of recent interest. 

Consider the class of bounded monotone signals on $L_{d,n}$ defined as
\begin{equation*}
	\mathcal{M}_{d,n} = \{\theta \in [0,1]^{L_{n,d}}: \theta[i_1,\dots,i_d] \leq \theta[j_1,\dots,,j_d] \:\:\text{whenever}\:\: i_1 \leq j_1,\dots,i_d \leq j_d\}.
\end{equation*}
Estimating signals within this class falls under the purview of isotonic regression. It is known that the LSE is near minimax rate optimal over $\mathcal{M}_{d,n}$ with a $\tilde{O}(n^{-1/d})$ rate of convergence for $d \geq 2$ and $O(n^{-2/3})$ rate for $d = 1$; see, e.g.~\cite{chatterjee2015risk},~\cite{chatterjee2018matrix},~\cite{han2019isotonic}. It can be checked that Theorem~\ref{thm:dcadap} actually implies that Dyadic Cart also achieves the $\tilde{O}(n^{-1/d})$ rate for signals in $\mathcal{M}_{d,n}$ as the total variation for such signals grows like $O(n^{d - 1}).$ Thus, Dyadic CART is a near minimax rate optimal estimator for multivariate Isotonic Regression as well.



Let us now consider univariate convex regression. It is known that the LSE is minimax rate optimal, attaining the $\tilde{O}(n^{-4/5})$ rate, over convex functions with bounded entries, see e.g.~\cite{GSvex},~\cite{chatterjee2016improved}. It is also known that the LSE attains the $\tilde{O}(k/n)$ rate if the true signal is piecewise linear in addition to being convex. Theorem~\ref{thm:slowrate} and Theorem~\ref{thm:fastrate} imply both these facts also hold for Univariate Dyadic Cart of order $1$. An advantage of Dyadic CART over convex regression LSE would be that Dyadic CART is computable in $O(n)$ time whereas such a fast algorithm is not known to exist yet for the convex regression LSE. Of course, the disadvantage of Dyadic CART here is the presence of a tuning parameter. 

It is an interesting question as to whether Dyadic CART or even ORT can attain near minimax optimal rates for other shape constrained classes such as multivariate convex functions etc. More generally, going beyond shape constraints, we believe that Dyadic CART of some appropriate order $r$ might be minimax rate optimal among multivariate function classes of bounded variation of higher orders. We leave this as a future avenue of research.

\subsection{Arbitrary Design}
Our estimators have been designed for the case when the design points fall on a lattice. It is practically important to have methods which can be implemented for arbitrary data. Optimal Dyadic Trees which can be implemented for arbitrary data in the context of classification have already been studied in~\cite{blanchard2007optimal},~\cite{scott2006minimax}. We now describe a similar way to define Optimal Regression Tree (ORT) fitting piecewise constant functions for arbitrary data.


Suppose we observe pairs $(x_1,y_1),\dots,(x_N,y_N)$ 
where we assume all the design points lie on the unit cube $[0, 1]^d$ after scaling, if necessary. We can divide $[0,1]^d$ into small cubes of side length $1/L$ so that there is a grid of $L^d$ cubes. We can now consider the space of all rectangular partitions where each rectangle is a union of these small cubes. Thus, each partition is necessarily a coarsening of the grid. Call this space of partitions $\mathcal{P}_{L}.$ Define $\mathcal{F}_{L}$ to be the space of functions which are piecewise constant on some partition in $\mathcal{P}_{L}.$ We can now define the optimization problem:
$$\hat{f} = \argmin_{f \in \mathcal{F}_{L}} \big(\sum_{i = 1}^{n} (y_i - f(x_i))^2 + \lambda |f|\big)$$
where $|f|$ is the number of constant pieces of $f.$


One can check that the above optimization problem can be rewritten as an optimization problem over the space of partitions in $\mathcal{P}_{L}$. The only difference with the lattice design setup is that some of the small cubes here might be empty (depending on the value $L$ and the sample size $N$) but a bottom up dynamic program can still be carried out. In this method, $L$ is an additional tuning parameter which represents the resolution at which we are willing to estimate $f^*$. Theoretical analysis need to be done to ascertain how $L$ should depend on sample size. 
The computational complexity would scale like $O(L^{2d + 1})$, so this method can still be computationally feasible for low to moderate $d.$

This method is very natural and is likely to be known to the experts but we are not aware of an exact reference. Like what is done in~\cite{bertsimas2017optimal}, it might be interesting to compare the performance of this method to the usual CART for real/simulated datasets with a few covariates. If the method performs well, theoretical analysis also needs to be done under the random design set up. We leave this for future work. 

\subsection{Dependent Errors}\label{sec:dep}
The proofs of our results can be extended without much effort to the case when $Z \sim N(0,\Sigma)$ where $\Sigma$ is a $n \times n$ covariance matrix with $\ell_2$-operator norm, i.e. the maximum eigen value $\|\Sigma\|_{{\rm op}}.$ The only change now would be that there would be an extra multiplicative factor $\|\Sigma\|_{{\rm op}}$ in front of all our risk bounds. Thus, if the maximum eigenvalue of $\Sigma$ remains bounded then all our 
rates of convergence remain the same.

In case of a general covariance matrix, the only change would be in the proof of Theorem $8.1$ which in turn relies crucially on Lemma $9.1$. In Lemma $9.1$ there are two results. One gives a bound on the expectation and the other gives a tail bound (using Gaussian concentration inequality for Lipschitz functions) for the random variable  $$\sup_{v \in S, \, v \neq \theta}  \langle Z, \frac{v - \theta}{\|v - \theta\|} \rangle$$ where $\theta$ is an arbitrary vector, $S$ is a fixed subspace of $\R^{N}$ and $Z \sim N(0,I).$ 

For a general covariance matrix $\Sigma$ we now have to give an expectation bound and a tail probability bound for a random variable of the form 
$$\sup_{v \in S, \, v \neq \theta}  \langle \Sigma^{1/2} Z, \frac{v - \theta}{\|v - \theta\|} \rangle$$ where $Z \sim N(0,1).$ Lemma~\ref{lem:0} has been written for a general positive semi definite matrix $\Sigma$ and thus gives the required expectation and tail bound. The rest of the proof is then similar and an extra multiplicative factor $\|\Sigma\|_{{\rm op}}$ will result as can be checked by the reader.

\section{Proofs}\label{Sec:proofs}
\subsection{Proof of Lemma~\ref{lem:compu}}\label{sec:algos}
\subsubsection{Case: $r = 0$}
Let us first consider the case $r = 0.$ Throughout this proof, the (multiplicative) constant involved in $O(\cdot)$ is assumed to be absolute, i.e. it does not depend on $r$ or $d$.

We describe the algorithm to compute the ORT estimator denoted by $\hat{\theta}^{(0)}_{\hier,d,n}$. Note that for any fixed $d \geq 1$ we need to compute the minimum:
$$OPT(L_{d, n}) \coloneqq \min_{\Pi: \Pi \in \mathcal{P}_{\hier,d,n}} \big(\|y - \Pi y\|^2 + K |\Pi|\big)$$
and find the optimal partition. Here $\Pi y$ denotes the orthogonal projection of $y$ onto the subspace $S^{(0)}(\Pi).$ In this case, $\Pi y$ is a piecewise constant array taking the mean value of the entries of $y_{R}$ within every rectangle $R$ constituting $\Pi.$ 

Now, for any given rectangle $R \subset L_{d, n}$ we can define the corresponding minimum restricted to $R$.
\begin{equation}
OPT(R) = \min_{\Pi: \Pi \in \mathcal{P}_{\hier,R}} \|y_{R} - \Pi y_R\|^2 + \lambda |\Pi|.
\end{equation}
where we are now optimizing only over the class of hierarchical rectangular partitions of the rectangle $R$ denoted by 
$\mathcal{P}_{\hier,R}$ and $|\Pi|$ denotes the number of 
rectangles constituting the partition $\Pi.$


A key point to note here is that due to the ``additive nature'' of the objective function over any partition (possibly trivial) of $R$ into two disjoint rectangles, we have the following {\em dynamic programming principle} for computing $OPT(R)$:
\begin{equation}
\label{eq:dynamic}
OPT(R) = \min_{R_1, R_2} (\,OPT(R_1) + OPT(R_2),\,  \|y_R - \overline{y}_R\|^2 + \lambda).
\end{equation}
Here $(R_1, R_2)$ ranges over all possible {\em nontrivial} partitions of $R$ into two disjoint rectangles. Consequently, in order to compute $OPT(R)$ and obtain the optimal heirarchical partition, the first step is to obtain the corresponding first split of $R.$ Let us denote this first split by $SPLIT(R).$

Let us now make some observations. For any rectangle $R$, the number of splits possible is at most $dn.$ This is because in each dimension, there are at most $n$ possible splits. Any split of $R$ creates two disjoint sub rectangles $R_1,R_2.$ Suppose we know $OPT(R_1)$ and $OPT(R_2)$ for $R_1,R_2$ arising out of each possible split. Then, to compute $SPLIT(R)$ we have to compute the minimum of the sum of $OPT(R_1)$ and $OPT(R_2)$ for each possible split as well as the number $1 + \|y_R - \Pi y_R\|^2$ which corresponds to not splitting $R$ at all. Thus, we need to compute the minimum of at most $nd + 1$ numbers.

The total number of distinct rectangles of $L_{d,n}$ is at most $n^{2d} = N^2.$ Any rectangle $R$ has dimensions $n_1 \times n_2 \times \dots \times n_d.$ Let us denote the number $n_1 + \dots + n_d$ by $Size(R).$ We are now ready to describe our main subroutine.

\subsubsection{Main Subroutine}
For each rectangle $R$, the goal is to store $SPLIT(R)$ and $OPT(R).$ We do this inductively on $Size(R).$ We will make a single pass/visit through all distinct rectangles $R \subset L_{d,n}$, in increasing order of $Size(R)$. Thus, we will first start with all $1\times1\times\cdots\times1$ rectangles of size equals $d$. Then we visit rectangles of size $d + 1$, $d +2$ all the way to $nd.$ Fixing the size, we can choose some arbitrary order in which we visit the rectangles.

For $1\times1\times\dots\times1$ rectangles, computing $SPLIT(R)$ and $OPT(R)$ is trivial. Consider a generic step where we are visiting some rectangle $R.$ Note that we have already computed $OPT(R^{'})$ for all rectangles $R^{'}$ with $Size(R^{'}) < Size(R).$ For a possible split of $R$, it generates two rectangles $R_1,R_2$ of strictly smaller size. Thus, it is possible to compute $OPT(R_1) + OPT(R_2)$ and store it. We do this for each possible split to get a list of at most $nd + 1$ numbers. We also compute $\|y_R - \overline{y}_R\|^2$ (described later) and add this number to the list. We now take a minimum of these numbers. This way, we obtain $OPT(R)$ and also $SPLIT(R).$

The number of basic operations needed per rectangle here is $O(nd)$. Since there are $N^2$ rectangles in all, the total computational complexity of the whole inductive scheme scales like $O(N^2\:n\:d)$.

To compute $\|y_R - \overline{y}_R\|^2$ for every rectangle $R$, we can again induct on size in 
increasing order. Define $SUM(R)$ to be the sum of entries of $y_R$ and $SUMSQ(R)$ to be the 
sum of squares of entries of $y_R.$ One can keep storing $SUM(R)$ and $SUMSQ(R)$ and $SIZE(R)$ 
bottom up. To compute these quantities for any rectangle $R$ it suffices to consider any non 
trivial partition of $R$ into two rectangles $R_1,R_2$ and then add these quantites previously 
stored for $R_1$ and $R_2.$ Therefore, this updating step requires constant number of basic 
operations per rectangle $R.$ Once we have computed $SUM(R)$ and $SUMSQ(R)$ and $SIZE(R)$ we can then calculate $\|y_R - \overline{y}_R\|^2$. Thus, this inductive sub scheme requires lower order computation.

Once we finish the above inductive scheme, we have stored $SPLIT(R)$ for every rectangle $R.$ We can now start going topdown, starting from the biggest rectangle which is $L_{d,n}$ itself. We can recreate the full optimal partition by using $SPLIT(R)$ to split the rectangles at every step. Once the full optimal partition is obtained, computing $\hat{\theta}$ just involves computing means within the rectangles constituting the partition. It can be checked that this step requires lower order computation as well.

\begin{remark}
	The same algorithm can be used to compute Dyadic CART in all dimensions as well. In this case, the number of possible splits per rectangle is $d$. Also, in the inductive scheme, we only need to visit rectangles which are reachable from $L_{d,n}$ by repeated dyadic splits. Such rectangles are necessarily of the form 
	of a product of dyadic intervals Here, dyadic intervals are interval subsets of $[n]$ which are reachable by succesive dyadic splits of $[n].$ 
	There are at most $2n$ dyadic intervals of $[n]$ and thus we need to visit at most $(2n)^d = 2^d\:N$ rectangles. 
\end{remark}

\subsubsection{Case: $r \geq 1$}
Let us fix a subspace $S \subset \R^{L_{d,n}}$ and a set of basis vectors $B = (b_1:b_2:\dots:b_L)$ for $S.$ Here, we think of $b_i$ as a column vector in $\R^N.$ For instance, $B$ may consist of all (discrete) monomials of degree at most $r$; however the algorithm works for any choice of $S$ and $B.$ We will abuse notation and also denote by $B$ the matrix obtained by stacking together the columns of $B.$ Thus $B$ is a $N \times L$ matrix; each row corresponds to an entry of the lattice $L_{d,n}.$ Also, for any rectangle $R \subset L_{d,n}$ we denote $B_R$ to be the matrix of size $|R| \times L$ where only the subset of rows corresponding to the entries in the rectangle $R$ are present. Also, let's denote the orthogonal projection matrix $B_R (B_R^T B_R)^{-1} B_R^T$ by $O_{B_R}.$

We can now again run the inductive scheme described in the last section. The important point here is that when computing $SPLIT(R)$ for each rectangle $R$, one needs to compute $y_R^{T} (I - O_{B_R}) 
y_R$. The dominating task is to compute $y_R^{T} O_{B_R} y_R.$ We can keep storing $(B_R^T B_R)$ as follows. Note that $(B_R^T B_R) = (B_{R_1}^T B_{R_1}) + (B_{R_2}^T B_{R_2})$ for any partition of $R$ 
into two subrectangles $R_1,R_2.$ Thus we can keep computing $B_R^T B_R$ inductively by adding two matrices. This is at most 
$O(L^2)$ work, i.e. requires at most $O(L^2)$ many elementary 
operations. The major step is computing the inverse $(B_R^T B_R)^{-1}$. This is at most $O(L^3)$ work. The next 
step is to compute $B_R^T y_R$. Again we can compute this by adding $(B_{R_1}^T y_{R_1}) + (B_{R_2}^T y_{R_2})$ which is $O(L)$ work. 
Finally we need to post multiply $(B_R^T B_R)^{-1}$ by $B_R^T y_R$ 
and pre multiply by $(B_R^T y_R)^T.$ This needs at most $O(L^2)$ 
work. 

\begin{remark}
	Multiplying $A_{m \times n}$ and $B_{n \times p}$ is actually $o(mnp)$ in general if we invoke Strassen's algorithm. Here we use the standard $O(mnp)$ complexity for the sake of concreteness.
\end{remark}

Per visit to a rectangle $R$, we also need to compute the minimum of $nd + 1$ numbers which is $O(nd)$ work. Finally, we need to visit $N^2$ rectangles in total. Thus the total computational complexity of ORT would be $O[N^2\:(nd + L^3)].$ 
A similar argument would give that the computational complexity of Dyadic CART of order $r \geq 1$ is $O[2^d\:N\:(d + L^3)].$ Now note that in case we use the polynomial basis, we would have $L = O(d^r).$ \qed 


\subsection{Proof of Theorem~\ref{thm:adapt}}
We will actually prove a more general result which will imply Theorem~\ref{thm:adapt}. Let $\mathcal{S}$ be any finite collection of subspaces of $\R^N.$ Recall that for a generic subspace $S\in \mathcal{S}$, we denote its dimension by $Dim(S)$ and we denote its orthogonal projection matrix by $O_{S}.$ Also, let $N_k(\mathcal{S}) = |S: Dim(S) = k, S \in \mathcal{S}|.$ Suppose, there exists a constant $c > 0$ such that for each $k \in [N],$ we have
\begin{equation}\label{eq:cardass}
N_k(\mathcal{S}) \leq N^{ck}. 
\end{equation}
Let $\Theta = \cup_{S \in \mathcal{S}} S$ be the parameter space. Let $y = \theta^* + \sigma Z$ be our observation where $\theta^* \in 
\R^n$ is the underlying mean vector and $Z \sim N(0,I).$ In this context, recall the definition of $k_{\mathcal{S}}(\theta)$ in~\eqref{eq:compl}. For a given tuning parameter $\lambda \geq 0$ we now define the usual penalized likelihood estimator $\hat{\theta}_{\lambda}$: 
\begin{equation*}
	\hat{\theta}_{\lambda} = \argmin_{\theta \in \Theta} \big(\|y - \theta\|^2 + \lambda \:k_{\mathcal{S}}(\theta)\big).
\end{equation*} 

\begin{theorem}[Union of Subspaces]\label{thm:oracleriskbd}
	Under the setting as described above, for any $0 < \delta < 1$ let us set $$\lambda \geq C \frac{\sigma^2\:\log N}{\delta}$$ for a particular absolute constant $C$ which only depends on $c$.  Then we have the following risk bound for $\hat{\theta}_{\lambda}$:
	\begin{equation*}
		\E \|\hat{\theta}_{\lambda} - \theta^*\|^2 \leq \inf_{\theta \in \Theta} \big[\frac{(1 + \delta)}{(1 - \delta)}\:\|\theta - \theta^*\|^2 + \frac{\lambda}{1 - \delta}\:k_{\mathcal{S}}(\theta)\big] + C \frac{\sigma^2}{\delta\:(1 - \delta)}.
	\end{equation*}
\end{theorem}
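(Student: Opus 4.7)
The plan is to carry out the standard model-selection oracle inequality argument (as in Chapter~3 of Rigollet's notes or Birgé–Massart), with the subspace-counting hypothesis~\eqref{eq:cardass} replacing the usual prior-complexity weights. The starting point is the basic inequality: for any fixed $\theta\in\Theta$ (comparator), the definition of $\hat\theta_\lambda$ gives
$$\|y-\hat\theta_\lambda\|^2+\lambda\,k_{\mathcal{S}}(\hat\theta_\lambda)\;\le\;\|y-\theta\|^2+\lambda\,k_{\mathcal{S}}(\theta).$$
Substituting $y=\theta^*+\sigma Z$, expanding both squared norms, and cancelling $\sigma^2\|Z\|^2$ yields
$$\|\hat\theta_\lambda-\theta^*\|^2\;\le\;\|\theta-\theta^*\|^2+2\sigma\langle Z,\hat\theta_\lambda-\theta\rangle+\lambda\,k_{\mathcal{S}}(\theta)-\lambda\,k_{\mathcal{S}}(\hat\theta_\lambda).$$

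Next I would control the cross term. Let $S_\theta$ and $\hat S$ denote (fixed) subspaces in $\mathcal{S}$ witnessing $k_{\mathcal{S}}(\theta)$ and $k_{\mathcal{S}}(\hat\theta_\lambda)$ respectively, and set $T=S_\theta+\hat S$, so that $\hat\theta_\lambda-\theta\in T$ with $\dim T\le k_{\mathcal{S}}(\theta)+k_{\mathcal{S}}(\hat\theta_\lambda)$. Cauchy–Schwarz and a weighted AM–GM step $2ab\le \delta a^2+b^2/\delta$ give
$$2\sigma\langle Z,\hat\theta_\lambda-\theta\rangle\;\le\;2\sigma\|P_TZ\|\,\|\hat\theta_\lambda-\theta\|\;\le\;\delta\|\hat\theta_\lambda-\theta\|^2+\tfrac{\sigma^2}{\delta}\|P_TZ\|^2.$$
Combining with $\|\hat\theta_\lambda-\theta\|^2\le(1+\delta)\|\hat\theta_\lambda-\theta^*\|^2+(1+\delta^{-1})\|\theta-\theta^*\|^2$ (or an equivalent parallelogram-type bound), rearranging, and absorbing the $\|\hat\theta_\lambda-\theta^*\|^2$ term into the left-hand side produces, after cleaning up constants and reparametrising $\delta$,
$$(1-\delta)\|\hat\theta_\lambda-\theta^*\|^2\;\le\;(1+\delta)\|\theta-\theta^*\|^2+\tfrac{\sigma^2}{\delta}\|P_TZ\|^2+\lambda k_{\mathcal{S}}(\theta)-\lambda k_{\mathcal{S}}(\hat\theta_\lambda).$$

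The main obstacle is the uniform control of $\|P_TZ\|^2$ since $T$ depends on the data through $\hat S$. For this I would define
$$W\;:=\;\sup_{S_1,S_2\in\mathcal{S}}\Bigl[\|P_{S_1+S_2}Z\|^2-A\,\bigl(\dim S_1+\dim S_2\bigr)\log N\Bigr]_+$$
for an absolute constant $A$ (depending on $c$) to be chosen. For each fixed pair $(S_1,S_2)$ the vector $P_{S_1+S_2}Z$ is standard Gaussian on a subspace of dimension $\le\dim S_1+\dim S_2=:k$, so by the $\chi^2$ tail bound $\mathbb{P}(\|P_{S_1+S_2}Z\|^2\ge k+2\sqrt{kt}+2t)\le e^{-t}$. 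Setting $t=A k\log N$ and union-bounding over the at most $N_{k_1}(\mathcal{S})N_{k_2}(\mathcal{S})\le N^{ck}$ such pairs, the probability that any pair exceeds $A k\log N$ decays like $\sum_k N^{-(A-c-o(1))k}$. Choosing $A$ large (in terms of $c$) makes this summable, and integrating the resulting tail gives $\mathbb{E}W\le C_0$ for an absolute constant. Hence $\tfrac{\sigma^2}{\delta}\|P_TZ\|^2\le \tfrac{A\sigma^2\log N}{\delta}(k_{\mathcal{S}}(\theta)+k_{\mathcal{S}}(\hat\theta_\lambda))+\tfrac{\sigma^2}{\delta}W$.

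Finally I would choose $\lambda\ge CK_{r,d}\sigma^2\log N/\delta$ with $C\ge 2A$ so that the term $\tfrac{A\sigma^2\log N}{\delta}k_{\mathcal{S}}(\hat\theta_\lambda)$ is dominated by $\lambda k_{\mathcal{S}}(\hat\theta_\lambda)$ and can be absorbed into the $-\lambda k_{\mathcal{S}}(\hat\theta_\lambda)$ term on the right. Taking expectations and dividing by $(1-\delta)$ yields
$$\E\|\hat\theta_\lambda-\theta^*\|^2\;\le\;\tfrac{1+\delta}{1-\delta}\|\theta-\theta^*\|^2+\tfrac{\lambda}{1-\delta}k_{\mathcal{S}}(\theta)+\tfrac{C\sigma^2}{\delta(1-\delta)},$$
and optimising over the comparator $\theta\in\Theta$ gives the infimum form. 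The $\sigma^2/(\delta(1-\delta))$ residual arises precisely from $\mathbb{E}W$ blown up by $1/\delta$ and by the factor $1/(1-\delta)$ from the final rearrangement. Theorem~\ref{thm:adapt} then follows by applying this general bound to the specific collection $\mathcal{S}^{(r)}_a$ for $a\in\{\rdp,\hier,\all\}$, after verifying the counting hypothesis~\eqref{eq:cardass} (with a constant depending only on $K_{r,d}$), which is a standard combinatorial count of rectangular partitions of $L_{d,n}$ of a given cardinality.
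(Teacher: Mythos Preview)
Your proof is correct and follows the standard model-selection oracle inequality template. The paper's argument takes a closely related but slightly cleaner route. Instead of projecting onto the sum $T=S_\theta+\hat S$ and union-bounding over \emph{pairs} of subspaces, the paper bounds the normalized inner product $\langle Z,(\hat\theta-\theta)/\|\hat\theta-\theta\|\rangle$ directly via an auxiliary lemma (Lemma~\ref{lem:0}): for fixed $\theta$ and fixed subspace $S$, one has $\sup_{v\in S}\langle Z,(v-\theta)/\|v-\theta\|\rangle\lesssim\sqrt{\dim S}+1$, because $v-\theta=(v-O_S\theta)-(I-O_S)\theta$ lies in $S$ plus a single extra direction. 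This allows a union bound over only the subspace containing $\hat\theta$, so the penalty term $-\lambda k_{\mathcal{S}}(\hat\theta_\lambda)$ absorbs the entire noise contribution without generating a companion $k_{\mathcal{S}}(\theta)$ term. The paper also uses Gaussian concentration for Lipschitz functions rather than $\chi^2$ tails. Your route produces an extra $\tfrac{A\sigma^2\log N}{\delta}\,k_{\mathcal{S}}(\theta)$ term, but since $\lambda$ dominates this coefficient it folds harmlessly into $\lambda k_{\mathcal{S}}(\theta)$, and the final bound is identical up to constants. One minor slip: the $K_{r,d}$ factor you wrote into the choice of $\lambda$ belongs to the downstream application (Theorem~\ref{thm:adapt}), not to the abstract union-of-subspaces statement.
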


Using Theorem~\ref{thm:oracleriskbd}, the proof of Theorem~\ref{thm:adapt} is now straightforward. 
\begin{proof}[Proof of Theorem~\ref{thm:adapt}]
	We just have to verify the cardinality bounds~\eqref{eq:cardass} for the collection of subspaces $\mathcal{S}^{(r)}_{a}$ for $a \in \{\rdp,\hier,\all\}.$ It is enough to verify for $\mathcal{S}^{(r)}_{\all}$ because it contains the other two.
	Now $N_k(\mathcal{S}^{(r)}_{\all})$ is clearly at most the number of distinct rectangles in $L_{d,n}$ raised to the power $k.$ The number of distinct rectangles in $L_{d,n}$ is always upper bounded by $N^2.$ Thus the bound~\eqref{eq:cardass} holds with $c = 2.$ 
\end{proof}
We now give the proof of Theorem~\ref{thm:oracleriskbd}.
\begin{proof}
To avoid clutter of notations we will drop the subscript $\mathcal S$ from $k_{\mathcal 
S}(\cdot)$ in this proof. By definition, for any arbitrary $\theta \in \mathcal{S}$, we have
	\begin{equation*}
		\|y - \hat{\theta}\|^2 + \lambda \:k(\hat{\theta}) \leq \|y - \theta\|^2 + \lambda\: k(\theta).
	\end{equation*}
	Since $y = \theta^* + \sigma Z$ we can equivalently write
	\begin{equation*}
		\|\theta^* - \hat{\theta} + \sigma Z\|^2 + \lambda k(\hat{\theta}) \leq \|\theta^* - \theta + \sigma Z\|^2 + \lambda k(\theta).
	\end{equation*}
	We can further simplify the above inequality by expanding squares to obtain
	\begin{equation*}
		\|\theta^* - \hat{\theta}\|^2 \leq \|\theta^* - \theta\|^2 + \lambda k(\theta) + 2\:\langle \sigma\:Z,\hat{\theta} - \theta \rangle - \lambda k(\hat{\theta}).
	\end{equation*}
	Now using the inequality $2ab \leq \frac{2}{\delta} a^2 + \frac{\delta}{2} b^2$ for arbitrary positive numbers $a,b,\delta$ we have
	\begin{align*}
		2 \langle \sigma\:Z, \hat{\theta} - \theta \rangle &\leq \frac{2}{\delta} \big(\langle \sigma\:Z,  \frac{\hat{\theta} - \theta}{\|\hat{\theta} - \theta\|} \rangle\big)^2 + \frac{\delta}{2} \|\hat{\theta} - \theta\|^2  \\&\leq\frac{2}{\delta} \big(\langle \sigma\:Z,  \frac{\hat{\theta} - \theta}{\|\hat{\theta} - \theta\|} \rangle\big)^2 + \delta \|\hat{\theta} - \theta^*\|^2 + \delta \| \theta^* - \theta \|^2.
	\end{align*}
	The last two displays therefore let us conclude that for any $\delta > 0$ and all $\theta \in \R^N$ the following pointwise upper bound on the squared error holds: 
	\begin{equation}\label{eq:ptwiserisk}
	\|\theta^* - \hat{\theta}\|^2 \leq \frac{(1 + \delta)}{(1 - \delta)} \|\theta^* - \theta\|^2 + \frac{\lambda}{(1 - \delta)} k(\theta) + \frac{2}{\delta (1 - \delta)} \big(\langle \sigma\:Z,  \frac{\hat{\theta} - \theta}{\|\hat{\theta} - \theta\|} \rangle\big)^2 - \frac{\lambda}{(1 - \delta)} k(\hat{\theta}).
	\end{equation}
	To get a risk bound, we now need to upper bound the random variable 
	$$L(Z) = \frac{2}{\delta (1 - \delta)} \big(\langle \sigma\:Z,  \frac{\hat{\theta} - \theta}{\|\hat{\theta} - \theta\|} \rangle\big)^2 - \frac{\lambda}{(1 - \delta)} k(\hat{\theta}).$$
	
	
	For the rest of this proof, $C_1,C_2,C_3$ would denote constants whose precise value might change from line to line. 
	We can write 
	\begin{equation*}
		L(Z) \leq \max_{k \in [n]} \big[\frac{2}{\delta (1 - \delta)}\sigma^2 \sup_{S \in \mathcal{S}: Dim(S) = k} \sup_{v \in S} \big(\langle Z,  \frac{v - \theta}{\|v - \theta\|} \rangle\big)^2 \:\:-\:\:  \frac{\lambda}{(1 - \delta)} k\big].
	\end{equation*}
	
	Fix any number $t  > 0.$ Also fix a subspace $S \in \mathcal{S}$ such that $Dim(S) = k.$ Using~\eqref{eq:lem0} in Lemma~\ref{lem:0} (stated and proved in Section~\ref{Sec:appendix}) we obtain
	\begin{equation*}
		\P(\big[\frac{2}{\delta (1 - \delta)}\sigma^2 \sup_{v \in S} \big(\langle Z,  \frac{v - \theta}{\|v - \theta\|} \rangle\big)^2 \:\:-\:\:  \frac{\lambda}{(1 - \delta)} k\big] > t) \leq C_1 \exp\big(-C_2\big[\frac{t\:\delta\:(1 - \delta)}{\sigma^2} + \frac{\lambda\:k\:\delta}{\sigma^2}\big]\big). 
	\end{equation*}
	Here we also use the fact that $\lambda$ would be chosen to be at least bounded below by a constant. Using a union bound argument we can now write
	\begin{align*}
		\P(&\big[\frac{2}{\delta (1 - \delta)}\sigma^2 \sup_{S \in \mathcal{S}: Dim(S) = k} \sup_{v \in S} \big(\langle Z,  \frac{v - \theta}{\|v - \theta\|} \rangle\big)^2 \:\:-\:\:  \frac{\lambda}{(1 - \delta)} k\big] > t) \\
		&\leq N_k(\mathcal{S})\:C_1 \exp\big(-C_2\big[\frac{t\:\delta\:(1 - \delta)}{\sigma^2} + \frac{\lambda\:k\:\delta}{\sigma^2}\big]\big).
	\end{align*}
	Now use the fact that $\log N_k(\mathcal{S}) \leq c\:k\:\log N$ and set $\lambda \geq C\:\sigma^2\:\log N\:\frac{1}{\delta}$ to get a further upper bound on the right hand side of the above display
	\begin{align*}
		C_1 \exp\big(-C_2\big[\frac{t\:\delta\:(1 - \delta)}{\sigma^2} - k\:\log N\big]\big)
	\end{align*}
	The above two displays along with another union bound argument then lets us conclude 
	\begin{equation*}
		\P(L(Z) > t) \leq  \sum_{k = 1}^{n} C_1 \exp\big(-C_2\big[\frac{t\:\delta\:(1 - \delta)}{\sigma^2} - k\:\log N\big]\big).
	\end{equation*}
	Finally, integrating the above inequality with respect to all nonnegative $t$ will then give us the inequality
	\begin{equation*}
		\E L(Z) \leq C_3 \frac{\sigma^2}{\delta\:(1 - \delta)}.
	\end{equation*}
	The above inequality coupled with~\eqref{eq:ptwiserisk} finishes the proof of the proposition. \qedhere
\end{proof}

\subsection{Proof of Proposition~\ref{prop:dyadicref}}\label{sec:dyaproof}

\begin{proof}[Proof of Proposition~\ref{prop:dyadicref}]
	For simplicity of exposition, we will take $n = 2^k$ to be a power of $2$ although the same proof will go through for a general $n$. A subinterval $I$ of $[n]$ is called a {\em dyadic interval of $[n]$} if it is of the form $[(a - 1)2^{s} + 1, a2^{s}]$ for some integers 
	$0 \leq s < k$ and $1 \leq a < 2^{k - s}$. The following lemma characterizes recursive dyadic partitions in $L_{2,n}.$ 
	
	\begin{lemma}\label{lem:rdpcharac}
		A partition $\Pi \in \mathcal{P}_{\all,2,n}$ is a recursive dyadic partition iff each of its constituent rectangles is a product of dyadic intervals of $[n].$ 
	\end{lemma}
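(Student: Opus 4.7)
The plan is to handle the two implications separately, with the forward direction being routine induction and the reverse direction using a key compatibility observation about dyadic intervals.

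For the forward direction ($\Rightarrow$), I would argue by induction on the number of dyadic splits used to construct $\Pi$. The base case is $\Pi = \{L_{2,n}\} = \{[1,2^k] \times [1,2^k]\}$, a product of two dyadic intervals. For the inductive step, the key observation is that splitting a dyadic interval at its midpoint produces two dyadic intervals of the next smaller level: indeed, $[(a-1)2^s+1, a 2^s]$ splits into $[(2a-2)2^{s-1}+1, (2a-1)2^{s-1}]$ and $[(2a-1)2^{s-1}+1, (2a)2^{s-1}]$, both dyadic. Hence a dyadic split applied to any product of dyadic intervals still yields products of dyadic intervals, closing the induction.

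For the reverse direction ($\Leftarrow$), I would prove the following stronger statement by strong induction on $s+t$: for any dyadic rectangle $R = I \times J$ with $I, J$ dyadic intervals of $[n]$ of lengths $2^s, 2^t$, every partition of $R$ into products of dyadic sub-intervals of $I$ and $J$ is an RDP of $R$. The base case $s = t = 0$ is trivial. For the inductive step, assume $|\Pi| \ge 2$ (else $\Pi = \{R\}$ is trivially an RDP) and establish the following compatibility claim: \emph{at least one midpoint split of $R$ (along a coordinate whose length is $\ge 2$) separates $\Pi$, i.e.\ no rectangle of $\Pi$ crosses it.}

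The compatibility claim is where the dyadic structure does the work. WLOG $s, t \ge 1$ (the case $s = 0$ or $t = 0$ reduces to the $d=1$ variant, which is even simpler). A rectangle $[a,b] \times [c,d] \in \Pi$ crosses the vertical midpoint of $I$ iff $[a,b]$ contains both $2^{s-1}$ and $2^{s-1}+1$; but any proper dyadic sub-interval of $I$ of length $\le 2^{s-1}$ lies entirely in the left or right half of $I$, so the only dyadic sub-interval that straddles the midpoint is all of $I$ itself. Thus a vertical-midpoint-crossing rectangle must have x-span equal to $I$, and analogously a horizontal-midpoint-crossing rectangle must have y-span equal to $J$. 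If both midpoint splits failed, there would exist $R_1 = I \times [c,d] \in \Pi$ and $R_2 = [a,b] \times J \in \Pi$, each a proper sub-rectangle of $R$; but then $R_1 \cap R_2 = [a,b] \times [c,d]$ is non-empty, contradicting the disjointness of $\Pi$.

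Once a compatible midpoint split $R = R_1 \cup R_2$ is fixed, $\Pi$ restricts to partitions $\Pi_1, \Pi_2$ of $R_1, R_2$, and each $\Pi_i$ still consists of products of dyadic sub-intervals of the sides of $R_i$ (since dyadic sub-intervals of the half of a dyadic interval are themselves dyadic sub-intervals). By the induction hypothesis each $\Pi_i$ is an RDP of $R_i$, so concatenating the initial midpoint split of $R$ with the split sequences realizing $\Pi_1$ and $\Pi_2$ realizes $\Pi$ as an RDP of $R$. Applying this to $R = L_{2,n}$ concludes the lemma. The main obstacle is the compatibility claim, and within it the pigeonhole-style argument ruling out simultaneous crossings of both midpoints; everything else is bookkeeping.
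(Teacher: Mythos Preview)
Your proposal is correct and follows essentially the same approach as the paper: both directions match, and in particular your compatibility claim for the reverse direction---that a dyadic sub-interval straddling the midpoint must be the full interval, forcing two full-span rectangles in orthogonal directions that would overlap---is precisely the paper's contradiction argument. The only cosmetic difference is that you frame the recursion as an explicit strong induction on $s+t$ over general dyadic rectangles, whereas the paper argues the compatibility once at the top level $L_{2,n}$ and then simply says ``iterate.''
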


	\begin{proof}
		The only if part can be shown by an induction on the successive possible steps of constructing a recursive dyadic partition. For the if part, let us argue as follows. Let $\Pi$ be an arbitrary partition such that every rectangle constituting it is a product of dyadic intervals. If $\Pi$ is not the trivial partition $L_{2,n}$ then we will argue that there exists a dyadic split of $L_{d,n}$ into $R_1,R_2$ such that $\Pi$ is a refinement of the partition just composed of $R_1,R_2.$ Equivalently, we would show that there is a coordinate $1 \leq j \leq 2$ such that by performing a dyadic split on the $j$ th coordinate, we do not \textit{cut} any rectangle of $\Pi$ in its interior. We will now argue by contradiction. Suppose there is no such coordinate. Take coordinate $1$ for example. Then there exists a rectangle $R_1 = [a_1,b_1] \times [a_2,b_2]$ of the partition $\Pi$ which is cut in its interior by a dyadic split on the first coordinate. This necessarily implies that $a_1 < n/2$ and $b_1 > n/2.$ Since $[a_1,b_1]$ is a dyadic interval this then necessarily implies that $a_1 = 1$ and $b_1 = n$. Repeating this argument for coordinate $2$, we see that there exists rectangle $R_2$ of the partition $\Pi$ which is of the form $[a_1,b_1] \times [1,n].$ Now clearly it is not possible for any partition in $L_{2,n}$ to consist of the rectangles $R_1,R_2$ together. Thus, we have arrived at a contradiction.

		Now take a coordinate $j \in [2]$ for which a dyadic split along that coordinate does not cut any rectangle of $\Pi$ in its interior. Perform this dyadic split into $R_1,R_2.$ Now it is as if we have two separate problems of the same type within $R_1$ and $R_2.$ Again, we can find coordinates to dyadically split within $R_1$ and $R_2$ so that it does not cut any rectangle of $\Pi$ in its interior if the partition $\Pi$ within $R_1$ and $R_2$ are not trivial respectively. We can now iterate this argument till we reach the partition $\Pi$ and stop. This shows that $\Pi$ is in fact a recursive dyadic partition which finishes the proof.
	\end{proof}
	
	We would also need the following observation which is equivalent to equation~\eqref{eq:refine1} in the statement of Proposition~\ref{prop:dyadicref} for $d = 1.$

	\begin{lemma}\label{lem:1dpartbd}
		Given a partition $\Pi \in \mathcal{P}_{\all,1,n}$, there exists a refinement $\tilde{\Pi} \in \mathcal{P}_{\rdp,1,n}$ such that 
		\begin{equation*}
			|\tilde{\Pi}| \leq C |\Pi| \log_2\big(\frac{n}{|\Pi|}\big)
		\end{equation*}
		where $C > 0$ is an absolute constant.
	\end{lemma}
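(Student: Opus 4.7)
The plan is to reduce the problem to a counting exercise on the canonical dyadic binary tree of $[n]$ and to exploit the fact that at coarse scales there are not enough dyadic intervals, while at fine scales there are not enough cut points of $\Pi$.

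For simplicity of exposition I take $n=2^L$ (the general case differs only by bookkeeping arising from the ceiling in the definition of a dyadic split). Let $T$ denote the complete binary tree of depth $L$ whose nodes at level $\ell \in \{0,1,\dots,L\}$ are labelled by the $2^\ell$ canonical dyadic intervals of $[n]$ of length $n/2^\ell$; the root is $[n]$ and the leaves are the singletons. Invoking (a one-dimensional version of) Lemma~\ref{lem:rdpcharac}, an RDP of $[n]$ is in bijection with a full binary subtree $T' \subseteq T$ containing the root, the pieces of the RDP being the leaves of $T'$. Writing $k = |\Pi|$ and letting $e_1 < e_2 < \cdots < e_{k-1}$ be the $k-1$ interior cut points of $\Pi$ (i.e., the positions where $\Pi$ breaks $[n]$), I would then identify the following key observation: an RDP refines $\Pi$ if and only if every dyadic interval that contains some $e_i$ strictly in its interior corresponds to an \emph{internal} node of $T'$. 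Consequently, the minimal RDP refining $\Pi$ has its set of internal nodes equal to
\[
\mathcal{I} \;=\; \{ v \in T : \text{the dyadic interval of } v \text{ contains some } e_i \text{ in its interior}\},
\]
and $|\tilde{\Pi}| = |\mathcal{I}| + 1$ by the standard leaves-equal-internal-plus-one identity for full binary trees.

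The remaining step is a level-by-level count of $\mathcal{I}$. Let $\mathcal{I}_\ell$ denote the internal nodes at level $\ell$. On the one hand, trivially $|\mathcal{I}_\ell| \le 2^\ell$ since that is the total number of dyadic intervals at level $\ell$. On the other hand, any two distinct $v,v' \in \mathcal{I}_\ell$ have disjoint dyadic intervals, so each such $v$ must be witnessed by a \emph{distinct} cut point $e_i$, yielding $|\mathcal{I}_\ell| \le k-1$. Hence $|\mathcal{I}_\ell| \le \min(2^\ell, k-1)$. Summing,
\[
|\mathcal{I}| \;\le\; \sum_{\ell=0}^{L-1} \min(2^\ell, k-1) \;\le\; \sum_{\ell : 2^\ell \le k} 2^\ell \;+\; \sum_{\ell : 2^\ell > k} (k-1) \;\le\; 2k + (k-1)\log_2(n/k),
\]
which after absorbing constants gives $|\tilde{\Pi}| \le C\, k \log_2(n/k)$.

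I do not anticipate a major obstacle here: the characterization of RDPs via dyadic intervals (the ``if and only if'' for refinement) is essentially combinatorial bookkeeping, the level-by-level bound is elementary, and the geometric/linear mixture in the sum is the standard way the $\log(n/k)$ factor arises (the same way it does in the minimax lower bound of Lemma~\ref{lem:mlbpc}). The only mildly delicate point will be handling general $n$ that is not a power of two, where the dyadic split uses ceilings; one handles this either by embedding $[n]$ into $[2^{\lceil \log_2 n\rceil}]$ or by verifying that the level-wise count $\min(2^\ell, k-1)$ still applies with at most a constant multiplicative loss.
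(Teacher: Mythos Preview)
Your proposal is correct and is essentially the same argument as the paper's: both hinge on the observation that at level $\ell$ of the dyadic tree the number of nodes that must be split is at most $\min(2^\ell, k)$, and summing this yields the $k\log_2(n/k)$ bound. The paper phrases this as a two-phase top-down construction (grow the full tree to depth $\approx \log_2 k$, then split at most $k$ nodes per remaining level), whereas you package it as a single level-by-level count of the internal nodes of the minimal refining RDP; the content is identical.
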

	
	\begin{proof}[Proof of Lemma~\ref{lem:1dpartbd}]
		Let $\Pi$ be an arbitrary partition of $[n]$ and let us denote $|\Pi|$ by $k.$ 
		Let us consider the binary tree associated with forming a RDP of $[n]$. Consider the following scheme to obtain a RDP of $[n]$ which is also a refinement of $\Pi.$ Grow the complete binary tree till the number of leaves first exceed $k.$ At this stage, each node consists of $O(n/k)$ elements. After this, if at any stage, a node of the binary tree (denoting some interval of $[n]$) is completely contained within some interval of $\Pi$, we do not split that node. Otherwise, we dyadically split the node. Due to our splitting criteria, in each such round, we split at most $k$ nodes because the nodes represent disjoint intervals. Also, the number of rounds of such splitting is at most $O(\log \frac{n}{k})$.
		When this scheme finishes, we clearly get a refinement of $\Pi$; say $\tilde{\Pi} \in \mathcal{P}_{\rdp,1,n}.$ These observations finish the proof.  
	\end{proof}

	\begin{corollary}\label{cor:simp}
		Given any interval $I \subset [n]$, there exists atmost $C \log_{2} n$ many dyadic intervals partitioning $I$.	
	\end{corollary}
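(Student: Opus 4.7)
The plan is to deduce Corollary~\ref{cor:simp} as an immediate consequence of Lemma~\ref{lem:1dpartbd}, which has just been established in the proof above. Given an arbitrary interval $I = [a,b] \subset [n]$, form the (univariate) partition $\Pi \in \mathcal{P}_{\all,1,n}$ whose blocks are $I$ together with whichever of $[1,a-1]$ and $[b+1,n]$ are nonempty. Then $|\Pi| \leq 3$, so Lemma~\ref{lem:1dpartbd} produces a recursive dyadic partition $\tilde{\Pi} \in \mathcal{P}_{\rdp,1,n}$ refining $\Pi$ with cardinality at most $C|\Pi|\log_2(n/|\Pi|) \leq 3C \log_2 n$.

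Next I would argue that the sub-collection of $\tilde\Pi$ consisting of those dyadic intervals lying inside $I$ is the partition we want. Since $\tilde\Pi$ refines $\Pi$ and $I$ is one of the blocks of $\Pi$, every element of $\tilde\Pi$ is either entirely contained in $I$ or entirely disjoint from $I$. Consequently, the sub-collection $\{J \in \tilde\Pi : J \subset I\}$ is itself a partition of $I$, each of whose blocks is a dyadic interval of $[n]$ by the one-dimensional analogue of Lemma~\ref{lem:rdpcharac}. Its cardinality is at most $|\tilde\Pi| \leq 3C\log_2 n$, which is the desired bound (with a new absolute constant).

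There is essentially no technical obstacle here; the only point requiring a line of care is the handling of the edge cases $a = 1$ or $b = n$, in which $\Pi$ has only one or two blocks rather than three, but the same estimate $|\tilde\Pi| \leq C\log_2 n$ continues to hold. If one prefers a self-contained direct argument that avoids invoking Lemma~\ref{lem:1dpartbd}, the standard greedy construction also works: starting from $a$, repeatedly take the longest dyadic interval of $[n]$ that begins at the current position and is contained in $I$; the lengths of the selected intervals first increase and then decrease (a ``pyramid''), so at most $O(\log n)$ intervals are used. Either route yields the claim.
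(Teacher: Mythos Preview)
Your proposal is correct and follows essentially the same approach as the paper: extend $I$ to a partition of $[n]$ with at most three blocks, apply Lemma~\ref{lem:1dpartbd}, and then restrict to the dyadic intervals lying inside $I$. Your write-up is in fact slightly more careful than the paper's, explicitly noting why the restricted sub-collection partitions $I$ into dyadic intervals and handling the edge cases $a=1$, $b=n$.
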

	
	\begin{proof}
		Let $I = [a,b].$ Let $I_0 = [1,a - 1]$ which is empty if $a - 1 = 0$ and $I_1 = [b + 1,n]$ which is empty if $b + 1 > n.$ Then the rectangles $I_0,I,I_1$ form a partition of $[n].$ Now use Lemma~\ref{lem:1dpartbd} to obtain a recursive dyadic partition with atmost $C \log_2 n$ intervals. Considering the intervals of this recursive dyadic partition just within $I$ now finishes the proof. 
	\end{proof}
	
	Now we are ready to finish the proof of Proposition~\ref{prop:dyadicref}. Take any rectangle $R$ constituting the partition $\Pi.$ Let $R = [a_1,b_1] \times [a_2,b_2].$ Using Corollary~\ref{cor:simp}, for each $i \in [2],$ write $[a_i,b_i]$ as a union of atmost $C \log n$ many disjoint dyadic intervals. As a result, we can view $R$ itself as a union of atmost $(C\:\log n)^2$ disjoint rectangles each of which has the property that it is a product of dyadic intervals of $[n]$. Doing this for each $R$ then gives us a refinement of $\Pi$ into atmost $k (C\:\log n)^2$ rectangles each of which is a product of dyadic intervals. By Lemma~\ref{lem:rdpcharac} this refinement is guaranteed to be a recursive dyadic partition. This finishes the proof.  \end{proof}

\begin{remark}\label{rem:dyadicint}
	A natural question is whether the following holds in any dimension $d > 2$ and any $d$ dimensional array $\theta$.
	\begin{equation*}
		k^{(r)}_{\rdp}(\theta) \leq C (\:\log n)^d k^{(r)}_{\all}(\theta).
	\end{equation*}
	Proposition~\ref{prop:dyadicref} shows the above is true when $d = 1,2$. Our proof technique breaks down for higher $d.$ The reason is that Lemma~\ref{lem:rdpcharac} is no longer true when $d > 2.$ For a counter example, consider the case where $d = 3$ and $n = 2.$ Consider the partition of $L_{3,2}$ consisting of rectangles 
	\begin{enumerate}
		\item $R_1 = \{1,2\} \times \{1\} \times \{2\}$
		\item $R_2 = \{2\} \times \{1,2\} \times \{1\}$
		\item $R_3 = \{1\} \times \{2\} \times \{1,2\}$
		\item $R_4 = \{1\} \times \{1\} \times \{1\}$
		\item $R_5 = \{2\} \times \{2\} \times \{2\}$.
	\end{enumerate}
	One can check that 
	\begin{enumerate}
		\item The rectangles $R_1,\dots,R_5$ form a partition of $L_{3,2}.$
		\item Each of $R_i$ is a product of dyadic intervals of $[2].$ 
		\item This partition cannot be a recursive dyadic partition. This is because the first dyadic split itself will necessarily cut one of the rectangles $R_1,R_2,R_3$ in its interior. 
	\end{enumerate}
\end{remark}

\subsection{Proof of Theorem~\ref{thm:dcadap}}
In view of Theorem~\ref{thm:adapt}, we need to show that if $\theta \in \R^{L_{d,n}}$ has small total variation, then it can be approximated well by some $\tilde{\theta} \in \R^{L_{d,n}}$ which is piecewise constant on not too many axis aligned rectangles. To establish this, we need two intermediate results.

\begin{proposition}\label{prop:division}
	Let $\theta \in \R^{L_{d,n}}$ and $\delta > 0.$ Then there exists a Recursive Dyadic Partition $\Pi_{\theta,\delta} = (R_1,\dots,R_k) \in \mathcal{P}_{\rdp,d,n}$ such that 
	\newline
	a) $k = |\Pi_{\theta,\delta}| \leq 1 +  \log_2 N \:\:\big(1 + \frac{\TV(\theta)}{\delta}\big)$ \newline
	b) $\TV(\theta_{R_i}) \leq \delta \:\:\:\:\forall i \in [k]$ \newline
	c) $\mathcal{A}(R_i) \leq 2 \:\:\:\:\forall i \in [k]$ \newline
	where $\mathcal{A}(R)$ denotes the aspect ratio of a generic rectangle $R.$ 
\end{proposition}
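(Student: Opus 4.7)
The plan is to construct $\Pi_{\theta,\delta}$ by a top-down greedy recursive splitting procedure: start from the trivial partition $\{L_{d,n}\}$, and for each rectangle $R$ in the current partition, if $\TV(\theta_R) > \delta$, dyadically split $R$ along one of its longest sides; otherwise, declare $R$ a leaf. The procedure terminates in a finite binary splitting tree whose set of leaves forms the desired recursive dyadic partition, and property (b) holds immediately from the stopping rule.

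To verify the aspect ratio bound (c), I would induct on the depth of the splitting tree. The root $L_{d,n}$ is a cube with $\mathcal{A}=1$. Assuming that a rectangle $R$ at some depth has sorted side lengths $n_1 \geq n_2 \geq \cdots \geq n_d$ with $n_1 \leq 2 n_d$, the dyadic split along the longest side replaces $n_1$ by $\lceil n_1/2 \rceil$ in one child (and $\lfloor n_1/2 \rfloor$ in the sibling). A short case check, using $2 \lceil n_1/2 \rceil \geq n_1 \geq n_2$ together with $\lceil n_1/2 \rceil \leq n_d$ (valid since $n_1 \leq 2 n_d$), shows that the new maximum-to-minimum side-length ratio is still at most $2$.

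For the count bound (a), observe first that every internal node $R$ of the splitting tree satisfies $\TV(\theta_R) > \delta$ by construction. Moreover, at any fixed depth in the tree, the internal nodes are pairwise disjoint sub-rectangles of $L_{d,n}$, so subadditivity of $\TV$ under disjoint partitions gives $\sum_{R \text{ internal at depth } j} \TV(\theta_R) \leq \TV(\theta)$, and hence the number of internal nodes at depth $j$ is at most $\TV(\theta)/\delta$. Since dyadically splitting the longest side roughly halves the volume of a rectangle, and integer rectangles have volume at least $1$, the total depth of the splitting tree is $O(\log_2 N)$; combining this with the per-depth bound gives that the total number of internal nodes $I$ is $O(\log_2 N \cdot \TV(\theta)/\delta)$. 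Since the tree is binary, the number of leaves is $k = I + 1$, which yields a bound of the form claimed in (a).

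The hard part will be the careful bookkeeping with ceilings and floors that arises when side lengths are odd: one must verify explicitly that aspect ratio remains $\leq 2$ after every split, and that the depth of the tree is bounded by $\log_2 N$ with a sharp enough constant to match the stated bound. A secondary point is that ``longest side'' must be well-defined in the presence of ties, which is handled by fixing any consistent tie-breaking convention. These are elementary issues but must be carried out explicitly to obtain the precise numerical constants in (a) and (c).
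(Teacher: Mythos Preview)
Your approach is essentially the paper's: a top-down greedy recursive splitting scheme with stopping rule $\TV(\theta_R)\le\delta$, where (b) is immediate and (a) follows from subadditivity of $\TV$ across the disjoint internal nodes at each fixed depth together with a $\log_2 N$ bound on the depth. The only difference is the splitting rule---the paper splits coordinate $i \pmod d$ at depth $i$ (a fixed cyclic schedule), which makes (c) essentially automatic (every coordinate has been halved the same number of times up to $\pm 1$) and gives the depth bound $\log_2 N$ with no extra constant; your longest-side rule also works but, exactly as you anticipate, requires more case analysis for (c) and a separate volume-decay argument for the depth.
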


\begin{proof}
In order to prove Proposition~\ref{prop:division}, we first describe a general greedy partitioning scheme --- called the $(\TV,\delta)$ scheme --- which takes as input a positive number $\delta$ and outputs a partition satisfying properties~a), b) and c). A very similar procedure was used in~\cite{chatterjee2019new}.

\medskip

\noindent {\em Description of the $(\TV,\delta)$ scheme.} First, let us note that a Recursive Dyadic Partition (RDP) of $L_{d,n}$ can be encoded via a binary tree and a labelling of the nonleaf vertices. This can be seen as follows. Let the root represent the full set $L_{d,n}.$ If the first step of partitioning is done by dividing in half along coordinate $i$ then label the root vertex by $i \in [d].$ The two children of the root now represent the subsets of $L_{d,n}$ given by $[n]^{i - 1} \times [n/2] \times [n]^{d - i}$ and its complement. Depending upon the coordinate of the next split, these vertices can now also be labelled.

In the first step of the $(\TV,\delta)$ scheme, we check whether $\TV(\theta) \leq \delta.$ If 
so, then stop and the root becomes a leaf. If not, then we label the root by $1$ and split 
$L_{d,n}$ along coordinate $1$ in half. The two vertices now represent rectangles $R_1,R_2$ 
say. For $i = 1,2$ we then check whether $\TV(\theta_{R_i}) \leq \delta.$ If so, then this 
node becomes a leaf. Otherwise, we go to the next step. In this step, we split the node along 
coordinate $2.$ We can iterate this procedure until each node in the binary tree has total 
variation at most $\delta.$ In step $i,$ we label all the nodes that are split by the number 
$i \pmod d.$ In words, we choose the the splitting coordinate from $1$ to $d$ in a cyclic 
manner. This ensures that the aspect ratio of each of the rectangles represented by the leaves is at most $2$.

After carrying out this scheme, we would be left with a Recursive Dyadic Partition of $L_{d,n}$, say, $\Pi_{\theta,\delta}$ satisfying properties~b) and c). 
In order to show that $\Pi_{\theta,\delta}$ also satisfies property~a), we need:

\begin{lemma}\label{lem:division}
	Let $\theta \in \R^{L_{d,n}}.$ Then, for any $\delta > 0$, for the $(\TV,\delta)$ division scheme, we have the following cardinality bound: 
	\begin{equation*}
		|\Pi_{\theta,\delta}| \leq 1 + \log_2 N \:\:\big(1 + \frac{\TV(\theta)}{\delta}\big)
	\end{equation*}
	Moreover, each axis aligned rectangle in $P_{\theta,\delta}$ has aspect ratio at most $2.$
\end{lemma}

\begin{proof}
	We say that a vertex of the binary tree is in generation $i$ if its graph distance to the root is $i - 1.$ Fix any positive integer $i.$ Let us consider the binary tree grown till step $i - 1.$ 
	Note that all the vertices $\{v_1,\dots,v_{k}\}$ in generation $i$ represent disjoint subsets of $L_{d,n}.$ Thus we would have $\sum_{i = 1}^{k} \TV(\theta_{v_i}) \leq V.$ This means that there can be at most $1 + \frac{\TV(\theta)}{\delta}$ vertices of generation $i$ that can be split. Now note that since there are $N$ vertices in total, the depth of the binary tree can be at most $\log_2 N.$ Thus there can be at most $\log_2 N \:\:\big(1 + \frac{\TV(\theta)}{\delta}\big)$ splits in total and each split increases the number of rectangular blocks by $1.$ This proves the cardinality bound. The second assertion is immediate from the fact that in generation $i$ the split is done on coordinate $i.$ 
\end{proof} 
This lemma together with the preceding discussions now implies our proposition.
\end{proof}

\subsubsection{Gagliardo Nirenberg Inequality}
Our next result concerns the approximation of a generic 
array $\theta$ with $\TV(\theta) \leq V$ by a constant 
matrix. This result is a crucial ingredient of the proof and is a discrete analogue of the 
Gagliardo-Nirenberg-Sobolev inequality for compactly 
supported smooth functions.

\begin{proposition}[Discrete Gagliardo-Nirenberg-Sobolev Inequality]
	\label{prop:gagliardo}
	Let $\theta \in \R^{\otimes_{i \in [d]}[n_i]}$ and 
	$$\overline \theta \coloneqq \sum_{(j_1, j_2, \ldots, j_d) \,\in\, \otimes_{i\in [d]}[n_i]}\theta[j_1, j_2, \ldots, j_d] / \prod_{i \in [d]}n_i$$ be the average of 
	the elements of $\theta$. Then for every $d > 1$ we have
	$$\sum_{(j_1, j_2, \ldots, j_d) \,\in\, \otimes_{i\in [d]}[n_i]}^m|\theta[j_1, j_2, \ldots, j_d] - \overline \theta|^{\frac{d}{d-1}} \leq \Big(1 + \max_{i, j \in [d]}\frac{n_i}{n_j}\Big)^{\frac{d}{d-1}}\TV(\theta)^{\frac{d}{d-1}}\,.$$	
As a consequence, we also have
$$\sum_{(j_1, j_2, \ldots, j_d) \,\in\, \otimes_{i\in [d]}[n_i]}^m|\theta[j_1, j_2, \ldots, j_d] - \overline \theta|^{2} \leq \Big(1 + \:\max_{i, j \in [d]}\frac{n_i}{n_j}\Big)^{2}\TV(\theta)^{2}\,.$$	
\end{proposition}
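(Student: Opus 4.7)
Observe first that the $\ell^2$ consequence follows immediately from the $\ell^{d/(d-1)}$ inequality: on any finite set with counting measure $\|\cdot\|_p$ is non-increasing in $p$, and for $d \geq 2$ we have $d/(d-1) \leq 2$, so $\|\theta - \overline{\theta}\|_2 \leq \|\theta - \overline{\theta}\|_{d/(d-1)}$ and squaring yields the second inequality. Hence it suffices to prove the first. Since both sides are invariant under $\theta \mapsto \theta + c$, I may assume $\overline{\theta} = 0$ throughout.

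The plan is to adapt the proof of the classical continuous Gagliardo-Nirenberg-Sobolev inequality. For each axis $i$ and each $j_{-i} \in \prod_{k \neq i}[n_k]$, define the one-dimensional total variation along the axis-$i$ line through $j_{-i}$,
\begin{equation*}
G_i(j_{-i}) \coloneqq \sum_{t = 1}^{n_i - 1} \big| \theta[j_{-i}, t+1] - \theta[j_{-i}, t] \big|,
\end{equation*}
so that $\sum_{i=1}^d \sum_{j_{-i}} G_i(j_{-i}) = \TV(\theta)$. In the continuous setting with compactly supported $f$, one has the pointwise bound $|f(x)| \leq \int |\partial_i f|\, dt$ for each $i$; writing $|f|^{d/(d-1)} = \prod_{i=1}^{d}|f|^{1/(d-1)}$, applying this bound in each factor, and invoking the Loomis-Whitney inequality -- whose discrete analogue
\begin{equation*}
\sum_j \prod_{i=1}^d h_i(j_{-i})^{1/(d-1)} \leq \prod_{i=1}^d \Big( \sum_{j_{-i}} h_i(j_{-i}) \Big)^{1/(d-1)}
\end{equation*}
holds verbatim -- followed by AM-GM, yields a clean bound of the form $(\TV(\theta)/d)^{d/(d-1)}$.

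The main obstacle is that in our setting $\theta$ (even after centering) does not vanish on each axis line, so the pointwise bound $|\theta[j]| \leq G_i(j_{-i})$ fails; one only has $|\theta[j] - \overline{\theta}_i(j_{-i})| \leq G_i(j_{-i})$, where $\overline{\theta}_i(j_{-i})$ is the mean of $\theta$ along the axis-$i$ line. To recover a useful pointwise estimate, I would first split $\theta = \theta^+ - \theta^-$ into its positive and negative parts. These have disjoint supports, so $|\theta|^{d/(d-1)} = (\theta^+)^{d/(d-1)} + (\theta^-)^{d/(d-1)}$ pointwise, and an edge-by-edge check gives $\TV(\theta) = \TV(\theta^+) + \TV(\theta^-)$, reducing the proof to the case of a nonnegative array $\phi$. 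Since $\overline{\theta} = 0$ forces $\sum \theta^+ = \sum \theta^-$, there exists a cell $j^* \in \prod[n_i]$ where $\phi[j^*] = 0$. Then for any $j$ I can write $\phi[j] = \phi[j] - \phi[j^*]$ and bound this by the TV along an axis-aligned path from $j^*$ to $j$; the axis-$i$ leg of the path contributes at most $G_i(j_{-i})$ (or $G_i(j^*_{-i})$), and the remaining $d - 1$ legs contribute only TV in directions $\neq i$. Averaging judiciously over plausible choices of $j^*$ should yield a refined bound of the form $\phi[j] \leq G_i(j_{-i}) + R_i(j_{-i})$, where the correction $R_i$ aggregates TV in directions $\neq i$ with a prefactor at most $\max_{k} n_i/n_k$, arising from the ratio of the axis-$i$ length to the codimension-$1$ slice size over which the averaging is performed. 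Feeding this estimate into the discrete Loomis-Whitney and AM-GM framework produces the claimed constant $(1 + \max_{i, j} n_i / n_j)^{d/(d-1)}$. The hardest part is expected to be the combinatorial bookkeeping: arranging the path-averaging so that the $d$-fold product of correction factors collapses cleanly into a single aspect-ratio constant, rather than to a looser expression involving each ratio $n_i/n_j$ separately.
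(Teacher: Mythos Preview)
Your reduction via $\theta = \theta^+ - \theta^-$ has a genuine gap. After the split you need, for each nonnegative array $\phi\in\{\theta^+,\theta^-\}$ possessing at least one zero, an inequality of the form $\sum_j \phi[j]^{d/(d-1)} \leq C\, \TV(\phi)^{d/(d-1)}$ with $C$ depending only on the aspect ratio. This is false. On the $n\times n$ grid ($d=2$) take $\phi\equiv 1$ except $\phi[1,1]=0$; then $\sum_j \phi[j]^2 = n^2-1$ while $\TV(\phi)=2$, so no $n$-free constant works. This $\phi$ genuinely arises as $\theta^+$ for a centered $\theta$ (put $\theta[1,1]=-(n^2-1)$ and $\theta\equiv 1$ elsewhere), so the obstruction is not hypothetical. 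Your edge-by-edge identity $\TV(\theta)=\TV(\theta^+)+\TV(\theta^-)$ is correct, but the split can be arbitrarily lopsided, and the zero-mean information---which is exactly what makes the proposition true---is destroyed once you pass to $\theta^+$ alone. The proposed ``averaging over plausible choices of $j^*$'' cannot rescue this: in the example there is a single zero, and the correction terms $R_i$ coming from one fixed $j^*$ sum to something of order $n\cdot\TV(\phi)$, not $\TV(\phi)$.

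The paper keeps the centered $\theta$ intact and instead \emph{extends it by zero across one boundary face}, declaring $\theta[j_1,\ldots,j_{i-1},0,\ldots,j_d]=0$. This immediately yields the classical pointwise bound $|\theta[j]|^d\le\prod_i\sum_{j_i'}|\nabla_i\theta|$, after which the iterated H\"older (Loomis--Whitney) step goes through verbatim and gives $\sum_j|\theta[j]|^{d/(d-1)}\le\prod_i S_i^{1/(d-1)}$. The cost is that each $S_i$ now contains a boundary contribution $\sum_{j_{-i}}|\theta[\ldots,1,\ldots]|$ in addition to $\TV(\theta)$. These boundary sums are controlled \emph{using} $\overline\theta=0$: one writes $\theta[1,j_2,\ldots,j_d]=\frac{1}{\prod_k n_k}\sum_{j'}(\theta[1,j_2,\ldots,j_d]-\theta[j'])$, telescopes each difference along an oriented lattice path, and counts for every edge $e$ how many (boundary point, $j'$) pairs route through it. The crucial point is that the averaging runs over \emph{all} of $\prod_k[n_k]$, not merely over zeros of $\theta$; this is what supplies the $1/\prod_k n_k$ factor that makes the edge-count collapse to $\max_{i,j}n_i/n_j$ and yields the stated constant.
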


\begin{remark} Although the Gagliardo-Nirenberg-Sobolev inequality is classical for Sobolev spaces (see, e.g., Chapter 12 in~\cite{leoni2017first}), we are not aware of any discrete version in the literature that applies to arbitrary $d$ dimensional arrays. Also it is not clear if the inequality in this exact form follows directly from the classical version. 
\end{remark}

\begin{remark}
	The above Discrete Gagliardo Nirenberg inequality was already proved for the $d = 2$ case in a previous article by the authors in~\cite{chatterjee2019new}. In this article we establish this inequality for all $d \geq 2.$
	\end{remark}

\begin{proof}
	Without loss of generality we may assume that $\overline 
	\theta = 0$. Notice that, for any $(j_1, j_2, \ldots, j_d) \in \otimes_{i \in [d]} [n_i]$, we can write
	\begin{equation*}
	\label{eq:gagliardo1}
	|\theta[j_1, j_2, \ldots, j_d]|^d \leq \prod_{i \in [d]}\sum_{j_i' \in [n_i]}\big|\theta[j_1,\ldots, j_{i-1},j_i', \ldots, j_d] - \theta[j_1,\ldots, j_{i-1},j_i'-1, \ldots, j_d]\big|\,,
	\end{equation*}
	where $\theta[j_1,\ldots, j_{i-1},0, \ldots, j_d] = 0$ for all $ i \in [d]$ and $(j_1, j_2, \ldots, j_d) \in 
	\otimes_{i \in [d]}[n_i]$. For convenience we will 
	henceforth denote the difference on the right hand side 
	in the above expression as $\nabla_i\theta[j_1, \ldots, 
	j_i', \ldots, j_d]$. Now we will sum the upper bound on $|\theta[j_1, j_2, \ldots, j_d]|^{\frac{d}{d-1}}$ 
	obtained from this inequality in each of the variables 
	separately. Let us start with $j_1$:
    \begin{align*}
    \sum_{j_1 \in [n_1]}&|\theta[j_1, j_2, \ldots, j_d]|^{\frac{d}{d-1}} \nonumber\\
    &= \big(\sum_{j_1' \in [n_1]}|\nabla_1\theta[j_1', j_2, \ldots, j_d]|\big)^{\frac{1}{d-1}} \cdot \sum_{j_1 \in [n_1]}\prod_{i \in [d] \setminus \{1\}}\big(\sum_{j_i' \in [n_i]}|\nabla_i\theta[j_1, \ldots, j_i', \ldots, j_d]|\big)^{\frac{1}{d-1}}\nonumber\\
    &\leq \big(\sum_{j_1' \in [n_1]}|\nabla_1\theta[j_1', j_2, \ldots, j_d]|\big)^{\frac{1}{d-1}}\prod_{i \in [d] \setminus \{1\}} \big(\sum_{j_1 \in [n_1],\, j_i' \in [n_i]}|\nabla_i\theta[j_1, \ldots, j_i', \ldots, j_d]|\big)^{\frac{1}{d-1}}\,
    \end{align*}
	where in the final step we used the H\"{o}lder's 
	inequality. By iterating the similar steps over all the remaining $i \in [d]$, we ultimately get
	\begin{align}
	\label{eq:gagliardo3}
	\sum_{(j_1, j_2, \ldots j_d) \in \otimes_{i \in [d]} [n_i]}|\theta[j_1, j_2, \ldots, j_d]|^{\frac{d}{d-1}} \leq \prod_{i \in [d]}S_i^{\frac{1}{d-1}}\,.
	\end{align}
	where
	$$S_i \coloneqq \sum_{(j_1, j_2, \ldots j_d) \in \otimes_{i \in [d]} [n_i]}|\nabla_i\theta[j_1, \ldots, j_i, \ldots, j_d]|$$
	However, notice that
	\begin{align}
	\label{eq:gagliardo4}
    S_i  \leq  \TV(\theta) + \sum_{j_k \in [n_k],\, k \in [d] \setminus \{i\}} |\nabla_i\theta[j_1,\ldots,1,\ldots,j_d]|\,
	\end{align}
where we can recall that $\theta[j_1, \ldots, j_{i-1}, 0, \ldots, j_d] = 0$ and hence $$\nabla_i\theta[j_1,\ldots,1,\ldots,j_d] = \theta[j_1,\ldots,1,\ldots,j_d]\,.$$
Also since $\overline \theta = 0$, we can write
\begin{equation}
\label{eq:expr}
\theta[j_1,\ldots,1,\ldots,j_d] = \frac{1}{\prod_{i \in 
[d]}n_i}\sum_{(j_1', j_2', \ldots, j_d') \in \otimes_{i\in 
[d]} n_i }(\theta[j_1, \ldots, 1, \ldots, j_d] - 
\theta[j_1', j_2'\ldots, j_d'])\,.
\end{equation}
Using this expression we will show that
\begin{align}
\label{eq:gradient_bnd}
\sum_{j_k \in [n_k],\, k \in [d] \setminus \{i\}} |\nabla_i\theta[j_1,\ldots,1,\ldots,j_d] \leq  \max_{i, j \in [d]}\frac{n_i}{n_j}\,\TV(\theta)
\end{align}
which along with \eqref{eq:gagliardo3} and \eqref{eq:gagliardo4} implies the proposition. 

We will only deal with the case $i = 1$ since the other cases are similar. In the remainder of the proof we will treat the set $\mathbb L \coloneqq \otimes_{i=1}^d [n_i]$ as an (induced) subgraph of $\Z^d$. We will call the minimum length path 
between $u = (u_1, u_2, \ldots, u_d)$ and $v = (v_1, v_2, \ldots, v_d)$ in $\Z^d$ as being {\em oriented} if its first $|u_1 - v_1|$ steps are along the first 
coordinate axis, the next $|u_2 - v_2|$ steps are along the second coordinate axis and so 
on. Also for any edge $e$ in $\mathbb L$, we denote the difference between $\theta$ evaluated at its two endpoints (oriented along the direction of increasing coordinate) 
as $\nabla_e \theta$. Now writing each difference appearing in \eqref{eq:expr} as a telescoping sum of $\nabla_e\theta$ terms along an oriented path we get:
		\begin{align*}
			\label{eq:gagliardo5}
			|\nabla_1\theta&[1,j_2, \ldots,j_d]| \\
			&\leq \frac{1}{\prod_{i \in [d]}n_i}\sum_{(j_1', j_2', \ldots, j_d') \in \otimes_{i\in [d]} n_i }|\theta[1, j_2, \ldots, j_d] - \theta[j_1', j_2'\ldots, j_d']|\nonumber \\ 
			&\leq \frac{1}{\prod_{i \in [d]}n_i}\sum_{\mb j' \in \otimes_{i\in [d]} n_i}\,{ \sum_{e \in \pi_{\mb j}^{\mb j'}}|\nabla_e\theta|}\,,
		\end{align*}
	where $\pi_{\mb j}^{\mb j'}$ is the oriented (shortest-length) path between $\mb j \coloneqq (1,j_2,\ldots, j_d)$ and 
	$\mb j' \coloneqq (j_1', j_2',\ldots, j_d')$. Summing over all $\mb j \in \partial_1 \mathbb L$ (vertices in $\mathbb L$ whose first coordinate is 1) as in 
	\eqref{eq:gagliardo4} we then get
	\begin{align*}
\sum_{\mb j \in \partial_1 \mathbb L}|\nabla_i\theta[1,j_2,\ldots,j_d]| \leq \frac{1}{\prod_{i \in [d]}n_i}\sum_{e \in \mathbb L}\sum_{\mb j \in \partial_1 \mathbb L}\sum_{\mb j' \in \mathbb L}\,{ \sum_{\pi_{\mb j}^{\mb j'} \ni e}|\nabla_e\theta|}\,.
	\end{align*}
	Thus the number of times $|\nabla_e\theta|$ appears in the above summation for any given edge $e$ is at most 
	$d$ times the total number of oriented paths containing $e$ whose one endpoint lies on the boundary of $\mathbb 
	L$. Call this number $N_e$. If $e$ lies along the $i$-th coordinate axis, then it follows from the definition of oriented paths that 
	$$N_e \leq\begin{cases}
	 n_i^2\prod_{j \neq 1, i} n_j & \mbox{if } i > 1\\
	 \prod_{j \in [d]} n_j & \mbox{otherwise}\,.
	\end{cases}$$
Plugging this estimate into the previous display immediately yields \eqref{eq:gradient_bnd}.
\end{proof}

We are now ready to prove Theorem~\ref{thm:dcadap}.
\begin{proof}[Proof of Theorem~\ref{thm:dcadap}]
	Without loss of generality let $S = \{1,\dots,s\}$ where $s = |S| \geq 2.$ Let $\theta^* \in K^{S}_{d,n}(V).$ Let us denote 
	$$\mathcal{R}(\theta^*) = \inf_{\theta \in \R^{L_{d,n}}} \big[\|\theta - \theta^*\|^2 + \sigma^2 \log N \:k^{(0)}_{\rdp}(\theta)\big].$$ In view of Theorem~\ref{thm:adapt}, it suffices to upper bound $\mathcal{R}(\theta^*).$

	Let us define the $s$ dimensional array $\theta^*_{S}$ which satisfies $$\theta^*_{S}(i_1,\dots,i_s) = \theta^*(i_1,\dots,i_s,1,\dots,1) \:\:\forall (i_1,\dots,i_s) \in [n]^{s}.$$

	For any fixed $\delta > 0,$ let $\Pi_{\theta,\delta}$ be the RDP of $L_{s,n}$ that is obtained from applying Lemma~\ref{lem:division} to the $s$ dimensional array $\theta^*_{S}.$ Let $\tilde{\theta} \in \R^{L_{s,n}}$ be defined to be piecewise constant on the partition $\Pi_{\theta,\delta}$ so that within each rectangle of $\Pi_{\theta,\delta}$ the array $\tilde{\theta}$ equals the mean of the entries of $\theta^*_{S}$ inside that rectangle. Each rectangle of $\Pi_{\theta,\delta}$ has aspect ratio at most $2$ and we have
	$$k^{(0)}_{\rdp}(\tilde{\theta}) = |\Pi_{\theta,\delta}| \leq C \log N \frac{\TV(\theta^*_{S})}{\delta}.$$
	We can now apply Proposition~\ref{prop:gagliardo} to conclude, within every such rectangle $R$ of $\Pi_{\theta,\delta}$, we have $\|\tilde{\theta}_{R} - \theta^*_{R}\|^2 \leq C \delta^2.$ 
	This gives us
	\begin{equation}\label{eq:l2error}
	\|\tilde{\theta} - \theta^*_{S}\|^2 = \sum_{R \in \Pi_{\theta,\ep}} \|\tilde{\theta}_{R} - \theta^*_{R}\|^2 \leq C \delta^2 |\Pi_{\theta,\delta}| \leq C \delta \log N\: \TV(\theta^*_{S}).
	\end{equation}

	Now let us define a $d$ dimensional array $\theta^{'} \in L_{d,n}$ satisfying for any $a \in [n]^d$ the following:
	$$\theta^{'}(a) = \tilde{\theta}(a_{S}).$$ 
	Then we have $k_{\rdp}^{(0)}(\theta^{'}) = k_{\rdp}^{(0)}(\tilde{\theta}).$ We also have 
	\begin{equation}\label{eq:e1}
	\|\theta^{'} - \theta^*\|^2 = n^{d - s} \|\tilde{\theta} - \theta^*_{S}\|^2 \leq C n^{d - s} \delta \log N\: \TV(\theta^*_{S}).
	\end{equation}
	
	We can now upper bound $\mathcal{R}(\theta^*)$ by setting $\theta = \theta^{'}$ in the infimum and since $\delta > 0$ was arbitrary we can actually write 
	\begin{align}\label{eq:bdd1}
		R(\theta^*) \leq \:&C\: \inf_{\delta > 0} \big(n^{d - s} \delta \: \log N\: \TV(\theta^*_{S}) + \sigma^2 \log N \frac{\TV(\theta^*_{S})}{\delta}\big) = \\& \nonumber C\:n^{d - s} V^{*}_{S} \log N \inf_{\delta > 0} \big(\delta + \frac{\sigma^2_{S}}{\delta}\big) = C\: n^{d - s} V^*_{S} \sigma_{S} \log N.
	\end{align}
	where in the last inequality we have set $\delta = \sigma_{S}.$

	Now, let us define $\overline{\theta^*_{S}}$ as the constant $s$ dimensional array with the value being the mean of all entries of $\theta^*_{S}.$ Define a constant $d$ dimensional array $\theta^{'} \in L_{d,n}$ where every entry is again the mean of all entries of $\theta^*_{S}.$ Then we have $k^{(0)}_{\rdp}(\theta^{'}) = 1.$ In this case, we can bound $\mathcal{R}(\theta^*)$ by setting $\theta = \theta^{'}$ in the infimum to obtain
	\begin{align}\label{eq:bd2}
		\mathcal{R}(\theta^*) \leq &\:C \big(n^{d - s} \|\theta^*_{S} - \overline{\theta^*_{S}}\|^2 + \sigma^2 \log N\big) \leq Cn^{d - s}\:\big((V^*_{S})^{2} + \sigma_{S}^2\big)
	\end{align}
	where in the last inequality we have again used Proposition~\ref{prop:gagliardo}.

	Also, by setting $\theta = \theta^*$ in the infimum  and noting that $k^{(0)}_{\rdp}(\theta^*) \leq n^{s}$ we can write
	\begin{equation}\label{eq:bd3}
	\mathcal{R}(\theta^*) \leq n^s \sigma^2 \log N \leq n^{d}  \sigma_{S}^2 \log N.
	\end{equation}

	Combining the three bounds given in~\eqref{eq:bdd1},~\eqref{eq:bd2} and~\eqref{eq:bd3} finishes the proof of the theorem in the case when $s \geq 2.$

	When $s = 1$ the proof goes along exactly similar lines except there is one main difference. In place of using Proposition~\ref{prop:gagliardo} we now have to use Lemma~\ref{lem:1dapprox}, stated and proved in Section~\ref{Sec:appendix}. We leave the details of this case to be verified by the reader. \end{proof}

\subsection{Proof of Theorem~\ref{thm:tvadapmlb}}
Consider the Gaussian mean estimation problem $y = \theta^* + \sigma Z$ where $\theta^* \in K_{d,n}(V).$ Let us denote the minimax risk of this problem under squared error loss by $\mathcal{R}(V,\sigma,d,n).$ A lower bound for $\mathcal{R}(V,\sigma,d,n)$ is already known in the literature when $d \geq 2$ and is due to Theorem $2$ in~\cite{sadhanala2016total}. 
\begin{theorem}\label{thm:generalmlb}[Sadhanala et al]
	Let $V > 0,\sigma > 0$ and let $n,d$ be positive integers with $d \geq 2.$ Let $N = n^d.$ Then there exists positive universal constant $c$ such that we
	\begin{equation*}
		\mathcal{R}(V,\sigma,d,n) = \inf_{\tilde{\theta} \in \R^{L_{d,n}}} \sup_{\theta \in K_{d,n}(V)} \E_{\theta} \|\tilde{\theta} - \theta\|^2 \geq c\: \min\{\frac{\sigma\:V}{2d} \sqrt{1 + \log(\frac{2\:\sigma\:d\:N}{V})}, N \sigma^2, \frac{V^2}{d^2} + \sigma^2\}.
	\end{equation*}
\end{theorem}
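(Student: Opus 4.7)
The plan is to establish each of the three terms in the minimum as a separate lower bound on $\mathcal{R}(V,\sigma,d,n)$, after which the claim follows by taking the minimum of the three. The three terms correspond to qualitatively different regimes for $V$: a ``low-signal'' regime where the class is essentially a small ball and two-point testing suffices; a ``high-signal'' regime where $K_{d,n}(V)$ is rich enough to contain a large Euclidean ball and the parametric $N\sigma^2$ bound applies; and an intermediate regime where the slow $(\sigma V/d)\sqrt{\log(\cdot)}$ rate emerges from a combinatorial packing argument at a carefully chosen scale.

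For the $V^2/d^2+\sigma^2$ term I would invoke Le Cam's two-point method twice. The $\sigma^2$ piece is standard: take $\theta_0=0$ and $\theta_1=c\sigma\,e_j$ (a spike at a single grid location), which lies in $K_{d,n}(V)$ whenever $V\gtrsim \sigma$; in the complementary regime the $\sigma^2$ bound is subsumed by the $V^2/d^2+\sigma^2$ term. The $V^2/d^2$ piece is obtained by taking $\theta_0=0$ and $\theta_1$ a smooth deformation such as a linear ramp in one coordinate, scaled so that $\TV(\theta_1)\lesssim V$ while $\|\theta_1\|^2 \gtrsim V^2/d^2$, and choosing the amplitude so that the KL between the two Gaussian shifts is $O(1)$. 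The $N\sigma^2$ bound is handled by observing that when $V$ is large enough for this term to be the active minimizer, $K_{d,n}(V)$ contains an $\ell_2$-ball of radius $\Omega(\sigma\sqrt{N})$, and the classical Gaussian minimax bound over such a ball (via Fano or Assouad on an $\epsilon$-net) supplies the estimate.

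The substantive step is the $(\sigma V/d)\sqrt{\log(2\sigma d N/V)}$ bound. I would construct a Varshamov--Gilbert packing at a subcube scale $\ell$ to be chosen. Partition $L_{d,n}$ into $K=N/\ell^d$ subcubes of side $\ell$, and to each binary string $s\in\{0,1\}^K$ associate a signal $\theta^{(s)}$ that equals a height $h$ on the cells where $s_i=1$ and vanishes elsewhere. A straightforward cut-size bound gives $\TV(\theta^{(s)})\lesssim dhN/\ell$, forcing $h\lesssim V\ell/(dN)$, while Varshamov--Gilbert supplies $M\ge 2^{cK}$ such signals pairwise $L^2$-separated by $\Omega(h\sqrt{N})$. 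Applying Fano's inequality and balancing the TV constraint against the KL--versus--$\log M$ constraint by choosing $\ell \sim (dN\sigma/V)^{2/(d+2)}$ yields a lower bound of the advertised polynomial order.

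The main obstacle is upgrading this single-scale Fano argument to recover the precise $\sqrt{\log(2\sigma d N/V)}$ factor rather than just the polynomial rate. This logarithmic refinement is the characteristic sharpening that separates a plain Assouad/Fano bound from a full metric-entropy lower bound in the Yang--Barron/Birg\'e sense, and it typically arises from estimating the local Kolmogorov entropy of the TV ball at the critical scale instead of from a one-shot packing. I would follow~\cite{sadhanala2016total} (of which this theorem is a restatement) for the precise calibration, and then assemble the three bounds into the stated $\min$ form by observing that in every regime of $V$ exactly one of the three is the effective obstruction.
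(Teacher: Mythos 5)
First, a point of context: the paper does not prove this statement at all --- it is imported verbatim as Theorem~2 of \cite{sadhanala2016total} and used as a black box inside the proof of Theorem~\ref{thm:tvadapmlb}. So there is no in-paper argument to match, and your sketch, which ends by deferring ``the precise calibration'' to that same reference, is not a self-contained alternative either. More importantly, the one step you do spell out --- the packing argument for the main term --- would not work as stated for $d \geq 3$, and the problem is not merely the missing $\sqrt{\log}$ factor. With subcubes of side $\ell$ and heights in $\{0,h\}$, your TV budget forces $h \leq V\ell/(dN)$ and Fano's KL condition forces $h^2 \leq C\sigma^2/\ell^{d}$, so even at your optimized $\ell \asymp (d\sigma N/V)^{2/(d+2)}$ the bound you obtain is of order $N\sigma^2\big(\tfrac{V}{d\sigma N}\big)^{2d/(d+2)}$, which falls short of the target $\sigma V/d$ by the factor $\big(\tfrac{V}{d\sigma N}\big)^{(d-2)/(d+2)}$. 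This factor equals $1$ only when $d=2$; for $d\geq 3$ it is polynomially small throughout the nontrivial regime $V = o(d\sigma N)$. The extremal geometry for TV in dimension $d\geq 2$ is spikes, not blocks: since each lattice point meets at most $2d$ edges, $\TV(\theta) \leq 2d\|\theta\|_1$, hence $K_{d,n}(V)$ contains the $\ell_1$ ball of radius $V/(2d)$, and the sharp minimax lower bound for $\ell_1$ balls (sparse-spike packing plus Fano, in the Donoho--Johnstone/Birg\'e--Massart form) delivers the term $c\,\frac{\sigma V}{2d}\sqrt{1+\log(2\sigma d N/V)}$ directly, log factor included. That $\ell_1$-embedding is precisely the route in \cite{sadhanala2016total}, and replacing your block packing by it is the missing idea.

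Two smaller issues in the other terms are repairable but worth flagging, because they undercut your closing claim that ``in every regime exactly one of the three is the effective obstruction.'' A single spike of height $c\sigma$ has total variation of order $d\sigma$, so it lies in $K_{d,n}(V)$ only when $V \geq Cd\sigma$ (the unconditional $\sigma^2$ bound is easier to get from the constant subfamily, which has zero TV). And a two-point argument calibrated so that the KL divergence is $O(1)$ can never certify a lower bound larger than a constant times $\sigma^2$; since the third term can be the active minimum while $\sigma^2 \leq V^2/d^2 \leq C\sigma^2\log N$, your Le Cam step does not cover that sliver, whereas the $\ell_1$-ball bound (or a multi-hypothesis version of your ramp construction) does. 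In short: the overall architecture (three regimes, three separate bounds) is sound, but the heart of the proof for $d\geq 3$ requires the spike/$\ell_1$ reduction rather than the subcube Varshamov--Gilbert construction, and as written the argument both misses the sharp logarithm and, more seriously, the correct polynomial order.
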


We are now ready to proceed with the proof. 
\begin{proof}
	Without loss of generality, let $S = \{1,2,\dots,s\}$ where $s = |S|.$ For a generic array $\theta \in L_{d,n}$ let us define for any $a \in [n]^s$, 
	$$\overline{\theta}_{S}(a_1,\dots,a_s) = \frac{1}{n^{d - s}}\sum_{i_1,\dots,i_{d - s} \in [n]^{d - s}} \theta(a_1,\dots,a_s,i_1,\dots,i_{d - s}).$$ In words, $\overline{\theta}_{S} \in L_{n,s}$ is a $s$ dimensional array obtained by averaging $\theta$ over the $d - s$ coordinates of $S^{c}.$

	For any $\theta^* \in K^{S}_{d,n}(V)$ we can consider the reduced $s$ dimensional estimation problem where we observe $\overline{y}_{S} = \theta^*_{S} + \sigma \overline{Z}_{S}.$ This is a $s$ dimensional version of our estimation problem where the parameter space is $K_{s,n}(V_{S})$ and the noise variance is $\sigma^2_{S}.$ Hence we can denote its minimax risk under ($s$ dimensional) squared error by $\mathcal{R}(V_{S},\sigma_{S},s,n)$ 



	By sufficiency principle, $n^{d - s}$ multiplied by the minimax risk under ($s$ dimensional) squared error loss for this reduced problem is equal to the minimax risk of our original $d$ dimensional problem under the $d$ dimensional squared error loss. That is, 
	
	\begin{align*}
		\inf_{\tilde{\theta}(y) \in \R^{L_{d,n}}} \sup_{\theta \in K^{S}_{d,n}(V)} \E_{\theta} \|\tilde{\theta}(y) - \theta\|^2 &= n^{d - s} \inf_{\tilde{\theta}(\overline{y}_{S}) \in \R^{L_{s,n}}} \sup_{\theta \in K_{s,n}(V_S)} \E_{\theta} \|\tilde{\theta}(\overline{y}_{S}) - \theta\|^2 \\
		&= n^{d - s} \mathcal{R}(V_{S},\sigma_{S},s,n).
	\end{align*}

	We can now invoke Theorem~\ref{thm:generalmlb} to finish the proof when $s \geq 2.$ When $s = 1$ we note that the space $\M_{n,V} = \{\theta \in \R^n: 0 \leq \theta_1 \dots \leq \theta_n \leq V\} \subset K_{1,n}(V).$ We can now use an existing minimax lower bound for vector estimation in $\M_{n,V}$ given in Theorem $2.7$ in~\cite{chatterjee2018denoising} to finish the proof. 
\end{proof}

\subsection{Proof of Theorem~\ref{thm:slowrate}}
We first prove the following proposition about approximation of a vector in $\mathcal{BV}^{(r)}_n$ by a piecewise polynomial vector. 
\begin{proposition}\label{prop:piecewise}
	
	Fix a positive integer $r$ and $\theta \in \R^n$, and let $V^{r}(\theta) \coloneqq V.$ For any $\delta > 0$, there exists a $\theta^{'} \in \R^n$ such that \newline
	a) $k^{(r)}_{\rdp}(\theta^{'}) \leq C_r \delta^{-1/r}$ for a constant $C_r$ depending only on $r$,  and\newline
	b) $|\theta - \theta^{'}|_{\infty} \leq V \delta$
	where $|\cdot|_{\infty}$ denotes the usual $\ell_{\infty}$-norm of a vector.
\end{proposition}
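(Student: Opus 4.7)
The plan is to prove Proposition~\ref{prop:piecewise} by a greedy recursive dyadic partitioning, combined with a piecewise polynomial approximation lemma for intervals with controlled $r$-th order differences.

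First I would establish the polynomial approximation lemma (presumably Lemma~\ref{lem:approxpoly}): for any subinterval $I \subset [n]$ of length $h_I$, there exists a vector $p_I \in \R^{h_I}$ that is a polynomial of degree at most $r-1$ on $I$ and satisfies
\[
  |\theta_I - p_I|_\infty \;\leq\; C_r\, h_I^{\,r-1} \, |D^{(r)}(\theta_I)|_1 \;=\; C_r \Big(\frac{h_I}{n}\Big)^{r-1} \frac{n^{r-1}}{|I|^{0}}\cdot |D^{(r)}(\theta_I)|_1 .
\]
This is the discrete analog of the Taylor remainder estimate: taking $p_I$ to be the Newton forward-difference interpolant at the first $r$ grid points of $I$, the residual $\theta_I[i]-p_I[i]$ can be written as a weighted sum of $r$-th differences of $\theta_I$ with binomial coefficients bounded by $\binom{h_I-1}{r-1}\leq C_r h_I^{\,r-1}$, which gives the claim.

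Next I would construct $\theta^{\prime}$ via the greedy scheme: start with the trivial recursive dyadic partition $\Pi_0=\{[n]\}$; at each step, for every interval $I$ in the current partition, stop splitting $I$ if $|I|\leq r$ or if $C_r h_I^{\,r-1}|D^{(r)}(\theta_I)|_1\leq V\delta$; otherwise perform a dyadic split on $I$ and recurse. Call the resulting partition $\Pi_{\theta,\delta}\in\mathcal{P}_{\mathrm{rdp},1,n}$. Define $\theta^{\prime}$ on each leaf $I$ to equal the polynomial approximant $p_I$ from the lemma above. Property (b) $|\theta-\theta^{\prime}|_\infty\leq V\delta$ then holds by the stopping rule.

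The main step is counting the leaves of $\Pi_{\theta,\delta}$. Since the tree is binary, $|\Pi_{\theta,\delta}|$ is at most twice the number of internal nodes plus one, so I would bound internal nodes level by level. At dyadic level $\ell$, each interval has length $h\leq n/2^\ell$, and every internal node $I$ satisfies
\[
  |D^{(r)}(\theta_I)|_1 \;>\; \frac{V\delta}{C_r\, h^{r-1}} \;\geq\; \frac{V\delta}{C_r}\cdot\frac{2^{\ell(r-1)}}{n^{r-1}}.
\]
Because the intervals at level $\ell$ are disjoint, $\sum_{I\in\mathrm{level}\,\ell}|D^{(r)}(\theta_I)|_1\leq |D^{(r)}(\theta)|_1 = V/n^{r-1}$, so the number of internal nodes at level $\ell$ is at most $C_r\,\delta^{-1}\, 2^{-\ell(r-1)}$, and it is also trivially at most $2^\ell$. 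For $r\geq 2$, summing $\sum_\ell \min(2^\ell,\, C_r \delta^{-1} 2^{-\ell(r-1)})$ and balancing the two bounds at the level where $2^{\ell r}\asymp \delta^{-1}$ yields $|\Pi_{\theta,\delta}|\leq C_r\,\delta^{-1/r}$, which is property (a).

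The hard part is twofold. First, getting a sharp enough constant in the approximation lemma so that the stopping rule directly gives the $V\delta$ error without incurring extra factors; this is the role of the discrete Newton-Taylor formula. Second, in the borderline case $r=1$ the geometric summation above degrades by a $\log n$ factor, which would need to be either absorbed into a slightly adjusted stopping threshold, handled by first producing a coarsest non-dyadic partition with $O(\delta^{-1})$ intervals via the usual greedy merging and then invoking Lemma~\ref{lem:1dpartbd}, or simply treated as a separate base case since $r=1$ corresponds to piecewise constant fits where direct arguments are classical.
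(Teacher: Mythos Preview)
Your proposal is correct and follows the paper's proof essentially step for step: the same approximation lemma (the paper obtains it from an explicit forward-difference reconstruction formula, which is exactly your Newton interpolant matching the first $r$ initial differences), the same dyadic stopping rule $|I|^{r-1}|D^{(r)}\theta_I|_1\le V\delta$, and the same level-wise count $n_\ell\le\min\bigl(2^\ell,\,\delta^{-1}2^{-\ell(r-1)}\bigr)$ summed over $\ell$. Your caution about $r=1$ is in fact warranted---the paper's geometric summation silently needs $r\ge2$ for the tail $\sum_{\ell\ge\ell^*}2^{-\ell(r-1)}/\delta$ to converge independently of $n$, so your proposed patches (greedy non-dyadic merging followed by Lemma~\ref{lem:1dpartbd}, or a separate base case) actually go slightly beyond what the paper writes.
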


\begin{remark}
	The above proposition is a discrete version of an analogous result for functions defined on the continuum in~\cite{BirmanSolomjak67}. The proof uses a recursive partitioning scheme and invokes abstract Sobolev embedding theorems which are not applicable to the discrete setting verbatim. We found that we can write a simpler proof for the discrete version which we now present. 
\end{remark}

\subsection{Proof of Proposition~\ref{prop:piecewise}}

We first need a lemma quantifying the error when approximating an arbitrary vector $\theta$ by a polynomial vector $\theta^{'}.$ This is the content of our next lemma. Recall that a vector $\theta$ is said to be a polynomial of degree $r$ if $\theta \in \mathcal{F}^{(r)}_{1,n}$ where $\mathcal{F}^{(r)}_{d,n}$ has been defined earlier. 
\begin{lemma}{\label{lem:approxpoly}}
	For any $\theta \in \R^n$ there exists a $r - 1$ degree polynomial $\theta^{'}$ such that
	\begin{equation}
	|\theta - \theta^{'}|_{\infty} \leq C_r n^{r - 1} |D^{(r)}(\theta)|_{1} = C_r V^{(r)}(\theta).
	\end{equation}
\end{lemma}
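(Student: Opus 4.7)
The plan is to prove this by a discrete analogue of Taylor's theorem with remainder. Specifically, I would define the approximating polynomial via the discrete Taylor expansion around the point $i = 1$:
\begin{equation*}
\theta'_i \;=\; \sum_{k=0}^{r-1} \binom{i-1}{k} (D^{(k)}\theta)_1, \qquad i \in [n].
\end{equation*}
Since $\binom{i-1}{k} = (i-1)(i-2)\cdots(i-k)/k!$ is a polynomial in $i$ of degree exactly $k$, the array $\theta'$ corresponds to evaluations on the lattice of a polynomial in $x = i/n$ of degree at most $r-1$, hence $\theta' \in \mathcal{F}^{(r-1)}_{1,n}$.

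Next I would establish the discrete Taylor identity
\begin{equation*}
\theta_i - \theta'_i \;=\; \sum_{j=1}^{i-r} \binom{i-j-1}{r-1} (D^{(r)}\theta)_j \quad \text{for } i \geq r+1,
\end{equation*}
together with $\theta_i = \theta'_i$ for $1 \leq i \leq r$ (the latter being immediate, since a degree $r-1$ polynomial is determined by $r$ values). The identity can be proved by induction on $r$: the case $r=1$ is the telescoping identity $\theta_i = \theta_1 + \sum_{j=1}^{i-1}(D\theta)_j$, and the inductive step follows from Abel summation applied to the inner sum, which converts a sum against $\binom{i-j-1}{r-2}$ into one against $\binom{i-j-1}{r-1}$. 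Alternatively, one can verify the formula by checking that both sides have the same $r$-th discrete difference (applying $D^{(r)}$ to the right-hand side collapses the binomial sum), together with matching values at $i = 1,\dots,r$.

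With the identity in hand, bounding the remainder in $\ell_\infty$ is routine:
\begin{equation*}
|\theta_i - \theta'_i| \;\leq\; \max_{1 \leq j \leq i-r}\binom{i-j-1}{r-1} \cdot \sum_{j=1}^{n-r} |(D^{(r)}\theta)_j| \;\leq\; \binom{n-1}{r-1}\,|D^{(r)}\theta|_1 \;\leq\; \tfrac{1}{(r-1)!}\,n^{r-1}\,|D^{(r)}\theta|_1,
\end{equation*}
which equals $C_r V^{(r)}(\theta)$ with $C_r = 1/(r-1)!$, as claimed.

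The only nontrivial step is the discrete Taylor identity itself; everything else is either an elementary binomial estimate or a direct consequence of definitions. I do not expect any fundamental obstacle, but care is needed in bookkeeping the index shifts in the inductive summation-by-parts argument. A clean alternative, should the induction become tedious, is to define $\theta'$ instead as the unique polynomial of degree $r-1$ interpolating $\theta$ at $i = 1,\dots,r$ and then express $\theta_i - \theta'_i$ for $i \geq r+1$ through a Newton-form divided-difference remainder; this yields the same bound via essentially the same binomial coefficient estimate.
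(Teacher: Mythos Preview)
Your proposal is correct and is essentially the same argument as the paper's. The paper invokes the identical discrete Taylor identity $\alpha_i = \sum_{j=1}^{i-r}\binom{i-j-1}{r-1}(D^{(r)}\alpha)_j + \sum_{j=1}^{r}\binom{i-1}{j-1}(D^{(j-1)}\alpha)_1$ (citing it as Lemma~D.2 of \cite{guntuboyina2020adaptive} rather than proving it), defines $\theta'$ implicitly as the unique degree $r-1$ polynomial matching the initial differences $D^{(j-1)}(\theta')_1 = D^{(j-1)}(\theta)_1$---which is exactly your explicit formula---and then bounds the remainder via $\binom{n}{k}\le n^k$ instead of your slightly sharper $\binom{n-1}{r-1}\le n^{r-1}/(r-1)!$.
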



\begin{proof}
	Any vector $\alpha \in \R^n$ can be expressed in terms of $D^{(r)}(\alpha)$ and $D^{(j - 1)}(\alpha)_1$ for $j = 1,1\dots,r$ as follows:
	\begin{equation}\label{eq:buildup}
	\alpha_i = \sum_{j = 1}^{i - r} {i - j - 1 \choose r - 1} (D^{(r)}(\alpha)_j) + \sum_{j = 1}^{r} {i - 1 \choose j - 1} D^{(j - 1)}(\alpha)_1
	\end{equation}
	where the convention is that ${a \choose b} = 0$ for $b > a$, ${0 \choose 0} = 1$ and the first term in the right hand side is $0$ unless $i > r.$ This result appears as Lemma $D.2$ in~\cite{guntuboyina2020adaptive}.

	Let us define $\theta^{'}$ to be the unique $r - 1$ degree polynomial vector such that the following holds for all $j \in \{1,2,\dots,r\}$,
	\begin{equation*}
		D^{(j - 1)}(\theta^{'})_1 =  D^{(j - 1)}(\theta)_1.
	\end{equation*}
	
	Now we apply~\eqref{eq:buildup} to the vector $\theta - \theta^{'}$ to obtain 
	\begin{equation}
	(\theta - \theta^{'})_i = \sum_{j = 1}^{i - r} {i - j - 1 \choose r - 1} (D^{(r)}(\theta)_j) \leq C_r n^{r - 1} |D^{(r)}(\theta)|_{1}.
	\end{equation}
	The first equality follows from the last display and the fact that $D^{(r)}(\theta - \theta^{'}) = D^{(r)}(\theta)$ (since $D^{(r)}$ is a linear operator and $\theta^{'}$ is a $r - 1$ degree polynomial). The last inequality follows by using the simple bound ${n \choose k} \leq n^{k}$ for arbitrary positive integers $n,k.$ \end{proof}

We are now ready to proceed with the proof. 

\begin{proof}[Proof of Proposition~\ref{prop:piecewise}]
	For the sake of clean exposition, we assume $n$ is a power of $2.$ The reader can check that the proof holds for arbitrary $n$ as well. For an  interval $I \subset [n]$ let us define $$\mathcal{M}(I) = |I|^{r - 1} |D^{(r)} \theta_{I}|_1$$ where $|I|$ is the cardinality of $I$ and $\theta_{I}$ is the vector $\theta$ restricted to the indices in $I.$ Let us now perform recursive dyadic partitioning of $[n]$ according to the following rule. Starting with the root vertex $I = [n]$ we check whether $\mathcal{M}(I) \leq V \delta.$ If so, we stop and the root becomes a leaf. If not, divide the root $I$ into two equal nodes or intervals $I_1 = [n/2]$ and $I_2 = [n/2 + 1 : n].$ For $i = 1,2$ we now check whether $\mathcal{M}(I_j) \leq V \delta$ for $j = 1,2.$ If so, then this node becomes a leaf otherwise we keep partitioning. When this scheme halts, we would be left with a Recursive Dyadic Partition of $[n]$ which are constituted by disjoint intervals. Let's say there are $k$ of these intervals denoted by $B_1,\dots,B_{k}.$ 
	By construction, we have $\mathcal{M}(B_i) \leq V \delta.$ We can now apply Lemma~\ref{lem:approxpoly} to $\theta_{B_i}$ and obtain a degree $r - 1$ polynomial vector $v_i \in \R^{|B_i|}$ such that $|\theta_{B_i} - v_i|_{\infty} \leq V \delta.$ Then we can append the vectors $v_i$ to define a vector $\theta^{'} \in \R^n$ satisfying $\theta^{'}_{B_i} = v_i.$ Thus, we have $|\theta - \theta^{'}|_{\infty} \leq V \delta.$ Note that, by definition, $k^{(r - 1)}_{\rdp}(\theta^{'}) = k.$ We now need to show that $k \leq C_r \delta^{-1/r}.$

	Let us rewrite $\mathcal{M}(I) = (\frac{|I|}{n}^{r - 1}) n^{r - 1} |D^{(r)} \theta_{I}|_1.$ Note that for arbitrary disjoint intervals $I_1,I_2,\dots,I_{k}$ we have by sub-additivity of the functional $V^{r}(\theta)$,
	\begin{equation}\label{eq:subadd}
	\sum_{j \in [k]} n^{r -1} |D^{(r)} \theta_{I_j}|_1 \leq V^{r}(\theta) = V.
	\end{equation}
	The entire process of obtaining our recursive partition of $[n]$ actually happened in several rounds. In the first round, we possibly partitioned the interval $I = [n]$ which has size proportion $|I|/n = 1 = 2^{-0}.$ In the second round, we possibly partitioned intervals having size proportion $2^{-1}$. 
	In general, in the $\ell$ th round, we possibly partitioned 
	intervals having size proportion $2^{-\ell}$. Let $n_\ell$ be the number of intervals with size proportion $2^{-\ell}$ that 
	we divided in round $\ell$. Let us count and give an upper bound on $n_\ell.$ If we indeed partitioned $I$ with size proportion $2^{-\ell}$ then by construction this means 
	\begin{equation}
	n^{r - 1} |D^{(r)} \theta_{I}|_1 > \frac{V \delta}{2^{-\ell(r - 1)}}.
	\end{equation}
	Therefore, by sub-additivity as in~\eqref{eq:subadd} we can conclude that the number of such divisions is at most $\frac{2^{-\ell(r - 1)}}{\delta}.$ On the other hand, note that clearly the number of such divisions is bounded above by $2^{\ell}.$ Thus we conclude
	\begin{equation*}
		n_\ell \leq \min\{\frac{2^{-\ell(r - 1)}}{\delta},2^\ell\}.
	\end{equation*}
	Therefore, we can assert that 
	\begin{equation}
	k = 1 + \sum_{l = 0}^{\infty} n_\ell \leq  \sum_{\ell = 0}^{\infty} \min\{\frac{2^{-\ell(r - 1)}}{\delta},2^\ell\} \leq C_r \delta^{-1/r}.
	\end{equation}
	In the above, we set $n_\ell = 0$ for $\ell$ exceeding the maximum number of rounds of division possible. The last summation can be easily performed as there exists a nonnegative integer $\ell^* = O( \delta^{-1/r})$ such that 
	\begin{equation*}
		\min\{\frac{2^{-\ell(r - 1)}}{\delta},2^\ell\} = 
		\begin{cases}
			2^\ell, & \text{for} \:\:\ell < \ell^* \\
			\frac{2^{-\ell(r - 1)}}{\delta} & \text{for} \:\:\ell \geq \ell^*
		\end{cases}
	\end{equation*}
	This finishes the proof. 
\end{proof}

We can now finish the proof of Theorem~\ref{thm:slowrate}.
\begin{proof}[Proof of Theorem~\ref{thm:slowrate}]
	As in the proof of Theorem~\ref{thm:dcadap}, it suffices to upper bound
	$$\mathcal{R}(\theta^*) = \inf_{\theta \in \R^{L_{1,n}}} \big[\|\theta - \theta^*\|^2 + \sigma^2 \log n \:k^{(r)}_{\rdp}(\theta)\big].$$ By Proposition~\ref{prop:piecewise}, we obtain
	$$\mathcal{R}(\theta^*) \leq  \inf_{\delta > 0} \big[n V^2 \delta^2 + C_r \delta^{-1/r} \sigma^2 \log n\big].$$
	Setting $\delta = c (\frac{\sigma^2 V^{1/r} \log n}{n})^{2r/(2r + 1)}$ for an appropriate constant $c$ finishes the proof.
\end{proof}

\subsection{Proof of Lemma~\ref{lem:mlbpc}}\label{sec:lempc}
We will first need the following lemma. Let $\|.\|_{0}$ denote the usual $\ell_0$ norm equal to the number of non zero entries. 
\begin{lemma}\label{lem:sparse}
	Fix any positive integers $n,d$ and $1 \leq k \leq N = n^d.$ Then for any $\theta \in \R^{L_{d,n}}$, $k^{(0)}_{\hier}(\theta) \leq 3d\|\theta\|_0.$
\end{lemma}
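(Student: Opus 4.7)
The plan is to prove the bound by induction on the ambient dimension $d$, using the observation that a hierarchical partition of $L_{d,n}$ can be built by first splitting off ``slices'' along one coordinate and then recursively partitioning each slice.

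Base case $d=1$: If $\theta \in \R^n$ has $s = \|\theta\|_0 \geq 1$ nonzero coordinates, I isolate each nonzero coordinate as a singleton interval and group the remaining zero coordinates into the (at most $s+1$) maximal intervals between the singletons. This gives a partition of $[n]$ into at most $2s+1 \leq 3s$ intervals on each of which $\theta$ is constant. Every univariate partition lies in $\mathcal P_{\hier,1,n}$, so the base case holds.

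Inductive step $d \geq 2$: Let
$$J = \{j \in [n] : \theta(j,a_2,\dots,a_d) \neq 0 \text{ for some } (a_2,\dots,a_d) \in [n]^{d-1}\},$$
write $m = |J| \leq s$, and for each $j \in J$ let $s_j$ be the number of nonzero entries of $\theta$ whose first coordinate equals $j$, so that $\sum_{j \in J} s_j = s$. First I perform a sequence of hierarchical splits of $L_{d,n}$ along coordinate $1$ to produce, for every $j \in J$, the singleton slice $\{j\} \times [n]^{d-1}$, together with the (at most $m+1$) ``empty'' slices $I \times [n]^{d-1}$ where $I$ ranges over the maximal subintervals of $[n]$ disjoint from $J$. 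This is a legal hierarchical partition because it is obtained by iterated binary splits along the first coordinate. The empty slices each contribute one constant rectangle (on which $\theta \equiv 0$). For each singleton slice $\{j\} \times [n]^{d-1}$, I view $\theta$ restricted to that slice as an element of $\R^{L_{d-1,n}}$ with $s_j$ nonzero entries and invoke the inductive hypothesis to partition it hierarchically into at most $3(d-1)s_j$ constant pieces.

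Adding up the counts gives a hierarchical partition of $L_{d,n}$ into at most
$$
(m+1) + \sum_{j \in J} 3(d-1)s_j \;\leq\; (s+1) + 3(d-1)s \;=\; 3ds - 2s + 1 \;\leq\; 3ds
$$
rectangles on each of which $\theta$ is constant, where the last inequality uses $s \geq 1$. This is exactly $k^{(0)}_{\hier}(\theta) \leq 3d\|\theta\|_0$.

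There is really no serious obstacle in this argument; the only subtlety to confirm is that the two-stage construction (first slice, then recursively partition each slice) is genuinely a single hierarchical partition in the sense of Section 1.2.2. This is immediate from the definition: once $L_{d,n}$ has been cut by successive binary splits into the slices $\{I \times [n]^{d-1}\}$, each slice is itself a rectangle, and any hierarchical partition of it is reachable by further binary splits, so the composition is hierarchical. The main care needed is just the bookkeeping to ensure the inequality $(s+1) + 3(d-1)s \leq 3ds$ holds for $s \geq 1$, which it does.
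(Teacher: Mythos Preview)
Your argument is correct and follows the same inductive strategy as the paper: slice along the first coordinate to isolate the nonzero hyperplanes, apply the inductive hypothesis in dimension $d-1$ on each nonzero slice, and count the leftover ``empty'' slices. The only cosmetic difference is that you count the empty slices as at most $m+1 \le s+1$, whereas the paper bounds them by $2k_0 \le 2\|\theta\|_0$; both lead to the same $3d\|\theta\|_0$ bound.
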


\begin{proof}
	We will prove the lemma via induction on the dimension $d$. Let us start with the base case 
	$d = 1$. Let $1 \le i_1 < \ldots < i_{\|\theta\|_{0}} \le n$ be the support of $\theta$ (i.e. the points where $\theta$ has nonzero value). Then $\theta$ is constant on each interval of the (hierarchical) partition $\Pi \coloneqq \{\{1, \ldots, i_1-1\}, \{i_1\}, \ldots, \{i_{\|\theta_0 -1\|} + 1, \ldots, i_{\|\theta_0\|} - 1\}, \{i_{\|\theta_0\|}\}\}$ 
	of $L_{1, n}$ and consequently $k_{\hier}(\theta) \le 3\|\theta\|_0$. Now suppose the statement holds 
	for some $d \ge 1$. Let $\theta \in \R^{L_{d+1, n}}$ and $1 \le i_1 < \ldots < i_{k_0} \le 
	n$ be the horizontal coordinates of the support of $\theta$. Evidently $k_0 \le 
	\|\theta\|_0$. Now, by our induction hypothesis, for every $j \in [k_0]$, there exists a hierarchical partition $\Pi_j$ of $L_{d, n; j} \coloneqq \{i_j\} \times [n]^d$ such that $\theta$ restricted to $L_{d, n; j}$ --- which we refer to as $\theta_j$ in the sequel --- is constant on each rectangle of $\Pi_j$ and $|\Pi_j| \le 3d\|\theta_j\|_0$. Then $\theta$ 
	is constant on each rectangle of the partition formed by $\Pi_j; j \in [k_0]$ and the rectangles $\{1, \ldots, i_1 - 1\} \times [n]^{d}, \ldots, \{i_{\|k_0 -1\|} + 1, \ldots, i_{\|k_0\|} - 1\} \times [n]^{d}$. Since $\Pi$ is a hierarchical refinement of the partition comprising the rectangles $\{1, \ldots, i_1 - 1\} \times [n]^{d}, L_{d, n; 1},\ldots, 
	\{i_{\|k_0 -1\|} + 1, \ldots, i_{\|k_0\|} - 1\} \times [n]^{d}, L_{d, n; k_0}$ which is clearly hierarchical, it follows that $\Pi$ is itself a hierarchical 
	partition. Finally, notice that $$|\Pi| \le \sum_{j \in [k_0]}|\Pi_j| + 2k_0 \le \sum_{j \in [k_0]}3d\|\theta_j\|_0 + 2\|\theta\|_0 \leq 3(d + 1)\|\theta\|_0,$$
	thus concluding the induction step.
\end{proof}

Now we can finish the proof of Lemma~\ref{lem:mlbpc}.
\begin{proof}
	Recall that $\Theta_{k,d,n} \coloneqq \{\theta \in \R^{L_{d,n}}: k_{\hier}^{(0)}(\theta) \leq k\}$. Let us denote $\Theta^{({\rm sparse})}_{k} = \{\theta \in \R^{L_{d,n}}: \|\theta\|_0 \leq k\}.$ Then, by Lemma~\ref{lem:sparse} we have the set inclusion for any $1 \leq k \leq N$,
	
	$$\Theta^{({\rm sparse})}_{k} \subset \Theta_{3dk,d,n}.$$
	A minimax lower bound (for the squared error) for the parameter space $\Theta^{({\rm sparse})}_{k}$ which is tight up to constants is $C \sigma^2 k \log \frac{eN}{k}.$ This result is very well known and can be found for instance as Corollary 4.15 in~\cite{rigollet2015high}. This finishes the proof.  
\end{proof}

\section{Auxiliary Results}\label{Sec:appendix}
\begin{lemma}\label{lem:0} 
	Let $Z \sim N(0,I_n)$ and $S \subset \R^n$ denote a subspace. Let $\Sigma$ be a positive 
	semi-definite matrix and $\Sigma^{1/2}$ denote its usual square root. Also, let 
	$\|\Sigma^{1/2}\|_{{\rm op}}$ denote the $\ell_2$-operator norm of $\Sigma^{1/2}$, i.e. the 
	square root of the maximum eigen value of $\Sigma$. Then the following holds for every $\theta \in \R^n$: 
	\begin{equation*}
		\E \sup_{v \in S,\, v \neq \theta}  \langle \Sigma^{1/2} Z, \frac{v - \theta}{\|v - \theta\|} \rangle \leq \|\Sigma^{1/2}\|_{{\rm op}} (Dim(S)^{1/2} + 1).
	\end{equation*}
	Also, for any $u > 0$, we have with probability at least $1 - 2 \exp(-\frac{u^2}{2}),$
	\begin{equation}\label{eq:lem0}
	\big(\sup_{v \in S, v \neq \theta}  \langle Z, \frac{v - \theta}{\|v - \theta\|} \rangle\big)^2 \leq \|\Sigma^{1/2}\|_{{\rm op}} \big[2\:Dim(S) + 4\:(1 + u^2)\big].
	\end{equation}
\end{lemma}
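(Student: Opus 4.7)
The plan is to reduce everything to two cleanly separated pieces by decomposing $\theta$ orthogonally with respect to $S$, then bound each piece using standard Gaussian quadratic form calculations and Gaussian concentration.

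First I would write $\theta = O_S\theta + (I-O_S)\theta$. For any $v\in S$, the vector $v - \theta = (v - O_S\theta) + \bigl(-(I-O_S)\theta\bigr)$ is the orthogonal sum of an element of $S$ and an element of $S^\perp$, hence $\|v-\theta\|^2 = \|v-O_S\theta\|^2 + \|(I-O_S)\theta\|^2$. Splitting the inner product along this decomposition gives
\[
\Bigl\langle \Sigma^{1/2}Z, \frac{v-\theta}{\|v-\theta\|} \Bigr\rangle
\;\le\; \frac{|\langle \Sigma^{1/2}Z,\,v - O_S\theta\rangle|}{\|v-\theta\|} + \frac{|\langle \Sigma^{1/2}Z,\,(I-O_S)\theta\rangle|}{\|v-\theta\|}.
\]
Using $\|v-\theta\|\ge \|v-O_S\theta\|$ for the first term (reparametrising $w = v - O_S\theta \in S\setminus\{0\}$) and $\|v-\theta\|\ge \|(I-O_S)\theta\|$ for the second (with equality at $v=O_S\theta$), I obtain
\[
\sup_{v\in S,\,v\neq\theta}\Bigl\langle \Sigma^{1/2}Z, \frac{v-\theta}{\|v-\theta\|} \Bigr\rangle
\;\le\; \|O_S \Sigma^{1/2} Z\| \;+\; |\langle \Sigma^{1/2}Z,\, u\rangle|,
\]
where $u$ is the unit vector $(I-O_S)\theta/\|(I-O_S)\theta\|$ when $\theta\notin S$, and the second term vanishes otherwise.

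Next I would bound each piece in $L^2$. Since $O_S$ is an idempotent orthogonal projection,
\[
\E\|O_S\Sigma^{1/2}Z\|^2 \;=\; \operatorname{tr}(\Sigma^{1/2} O_S \Sigma^{1/2}) \;=\; \operatorname{tr}(O_S\,\Sigma) \;\le\; \|\Sigma\|_{\mathrm{op}}\operatorname{tr}(O_S) \;=\; \|\Sigma^{1/2}\|_{\mathrm{op}}^2 \,\operatorname{Dim}(S),
\]
so $\E\|O_S\Sigma^{1/2}Z\|\le \|\Sigma^{1/2}\|_{\mathrm{op}}\sqrt{\operatorname{Dim}(S)}$ by Jensen. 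For the second piece, $\langle\Sigma^{1/2}Z,u\rangle=\langle Z,\Sigma^{1/2}u\rangle$ is Gaussian with variance $\|\Sigma^{1/2}u\|^2\le \|\Sigma^{1/2}\|_{\mathrm{op}}^2$, so $\E|\langle\Sigma^{1/2}Z,u\rangle|\le \|\Sigma^{1/2}\|_{\mathrm{op}}$. Adding the two bounds delivers the first display.

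For the tail bound I would check that the map $z\mapsto F(z)\coloneqq \sup_{v\in S,v\neq\theta}\langle\Sigma^{1/2}z,(v-\theta)/\|v-\theta\|\rangle$ is $\|\Sigma^{1/2}\|_{\mathrm{op}}$-Lipschitz: each fixed-$v$ linear functional $z\mapsto\langle z,\Sigma^{1/2}(v-\theta)/\|v-\theta\|\rangle$ has Lipschitz constant $\|\Sigma^{1/2}(v-\theta)/\|v-\theta\|\|\le \|\Sigma^{1/2}\|_{\mathrm{op}}$, and a supremum of $L$-Lipschitz functions is $L$-Lipschitz. The standard Gaussian concentration inequality for Lipschitz functions then yields $F \le \E F + \|\Sigma^{1/2}\|_{\mathrm{op}}\,u$ with probability at least $1-2e^{-u^2/2}$. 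Squaring and applying $(a+b)^2\le 2a^2+2b^2$ together with $(1+u)^2\le 2(1+u^2)$ gives the claimed inequality \eqref{eq:lem0}.

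The proof is essentially bookkeeping; the only mild obstacle is ensuring the orthogonal decomposition handles all edge cases (namely $\theta\in S$, where the second term drops out, and $\theta\perp S$, where $O_S\theta=0$) and keeping the single factor of $\|\Sigma^{1/2}\|_{\mathrm{op}}$ intact throughout, as opposed to an ambient dimensional factor that a crude bound $\|\Sigma^{1/2}Z\|$ would introduce.
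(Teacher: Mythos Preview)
Your proposal is correct and follows essentially the same route as the paper: orthogonal decomposition of $\theta$ with respect to $S$, the trace inequality $\operatorname{tr}(O_S\Sigma)\le\|\Sigma\|_{\mathrm{op}}\operatorname{tr}(O_S)$ for the main term, and Gaussian concentration for the $\|\Sigma^{1/2}\|_{\mathrm{op}}$-Lipschitz supremum. The only point the paper spells out that you leave implicit is the two-sided control needed before squaring: one must also know $|\E F|\le \|\Sigma^{1/2}\|_{\mathrm{op}}(\sqrt{\operatorname{Dim}(S)}+1)$, which the paper obtains via the symmetry $\E\inf_v\langle\Sigma^{1/2}Z,\cdot\rangle=-\E\sup_v\langle\Sigma^{1/2}Z,\cdot\rangle$ (equivalently, $\E F\ge 0$).
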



\begin{proof}
	We can write
	\begin{align*}
		&\sup_{v \in S, \, v \neq \theta}  \langle  \Sigma^{1/2} Z, \frac{v - \theta}{\|v - \theta\|} \rangle = \sup_{v \in S, \, v \neq \theta} \langle \Sigma^{1/2} Z, \frac{v - O_{S} \theta - (I - O_{S})\theta}{\sqrt{|v - O_{S} \theta|^2 + |(I - O_{S})\theta|^2}} \rangle  \\\leq& \sup_{v \in S, \, v \neq \theta} \langle \Sigma^{1/2} Z, \frac{v - O_{S} \theta}{\sqrt{|v - O_{S} \theta|^2 + |(I - O_{S})\theta|^2}} \rangle + \sup_{v \in S, \, v \neq \theta} \langle \Sigma^{1/2} Z, \frac{(I - O_{S})\theta}{\sqrt{|v - O_{S} \theta|^2 + |((I - O_{S})\theta|^2}} \rangle \\\leq& \sup_{v \in S: \|v\| \leq 1}  \langle \Sigma^{1/2} Z, v \rangle + \sup_{v \in S': \|v\| \leq 1} \langle \Sigma^{1/2} Z, v\rangle
	\end{align*} 
	where $S'$ is the subspace spanned by the vector $(I - O_S)\theta$.

	Now we will give a bound on the expectation of both the terms above separately. Let us work with the first term. 
	First note that $$\sup_{v \in S: \|v\| \leq 1} \langle \Sigma^{1/2} Z,v \rangle = \sup_{v \in S: \|v\| \leq 1} \langle O_S \Sigma^{1/2} Z,v \rangle = \|O_S \Sigma^{1/2} Z\|.$$
	Secondly, 
	\begin{align*}\label{eq:gaussian}
		&\big(\E \|O_S \Sigma^{1/2} Z\|\big)^2 \leq \E \|O_S \Sigma^{1/2} Z\|^2 = \E \, Z^{t} \Sigma^{1/2} O_{S} \Sigma^{1/2} Z = \\& Trace(\Sigma^{1/2} O_{S} \Sigma^{1/2}) \leq \|\Sigma\|_{{\rm op}} Trace(O_{S}) = \|\Sigma\|_{{\rm op}} Dim(S).
	\end{align*}
	where the second last equality follows from standard facts about Gaussian quadratic forms and the last inequality follows from the following reasoning.

	Consider the spectral decomposition of $\Sigma = P D P^{t}$ where $P$ is an orthonormal matrix and $D$ is a diagonal matrix consisting of eigenvalues of $\Sigma.$ 
	Then, by commutativity of trace, we have
	\begin{align*}
		&Trace(\Sigma^{1/2} O_{S} \Sigma^{1/2}) = Trace(\Sigma O_{S}) = Trace(P D P^{t} O_{S}) =  \\& Trace(D P^{t} O_{S} P) \leq \|D\|_{{\rm op}} Trace(P^{t} O_{S} P) = \|\Sigma\|_{{\rm op}} Trace(O_{S} P P^{t}) = \|\Sigma\|_{{\rm op}} Trace (O_{S}).
	\end{align*}

	Therefore, we can say that 
	\begin{equation*}
		\E \sup_{v \in S: \|v\| \leq 1} \langle \Sigma^{1/2} Z,v \rangle \leq \|\Sigma^{1/2}\|_{{\rm op}} Dim(S)^{1/2}.
	\end{equation*}
	Similarly, for the other term we get
	\begin{equation*}
		\E \sup_{v \in S': \|v\| \leq 1} \langle  \Sigma^{1/2} Z,v \rangle  \leq \|\Sigma^{1/2}\|_{{\rm op}} Dim(S')^{1/2} \leq \|\Sigma^{1/2}\|_{{\rm op}}.
	\end{equation*}
	This proves the first part of the lemma.

	Coming to the second part, note that, by symmetry we also have the lower bound
	$$\E \sup_{v \in S, v \neq \theta}  \langle  \Sigma^{1/2} Z, \frac{v - \theta}{\|v - \theta\|} \rangle \ge \E \inf_{v \in S, v \neq \theta}  \langle  \Sigma^{1/2} Z, \frac{v - \theta}{\|v - \theta\|} \rangle \geq - \|\Sigma^{1/2}\|_{{\rm op}} \big(Dim(S)^{1/2} + 1\big).$$	
	Now we note that $\sup_{v \in S,\, v \neq \theta}  \langle \Sigma^{1/2} Z, \frac{v - \theta}{\|v - \theta\|} \rangle$ is a Lipschitz function of $Z$ with Lipschitz constant $\|\Sigma^{1/2}\|_{{\rm op}}$. Thus we can use the well-known Gaussian Concentration inequality (see, e.g. \cite[Theorem~7.1]{L01}), which is stated as Theorem~\ref{prop:gauss_conc} for the convenience of the reader, to conclude that for any $u > 0,$ with probability at least $1 - 2 \exp(-u^2/2)$ we have 
	\begin{equation*}
		|\sup_{v \in S}  \langle Z, \frac{v - \theta}{\|v - \theta\|} \rangle| \leq \|\Sigma^{1/2}\|_{{\rm op}} \,|\sqrt{Dim(S)} + 1 + u|
	\end{equation*}
	Using the elementary inequality $(a + b)^2 \leq 2 a^2 + 2 b^2$ now finishes the proof of \eqref{eq:lem0}. 
\end{proof}



\begin{theorem}\label{prop:gauss_conc}
	Let $Z_1, Z_2,\dots, Z_m$ be independent standard Gaussian variables and $f: \R^m \mapsto \R$ be a Lipschitz function with Lipschitz constant $1$. Then $\E f(Z_1, \dots, Z_m)$ is finite and
	$$ P (|f(Z_1, \dots, Z_m) - \E f(Z_1, \dots, Z_m)| > t) \leq 2 \exp^{-t^2/2}$$
	for all $t\geq 0$.
\end{theorem}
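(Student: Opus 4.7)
The plan is to establish this classical inequality via Herbst's argument combined with the Gaussian logarithmic Sobolev inequality. First, by a standard mollification step (convolving $f$ with a Gaussian density of small variance and then truncating), one reduces to the case where $f$ is smooth and still $1$-Lipschitz, so that $\|\nabla f(z)\| \le 1$ pointwise; the original Lipschitz statement then follows by a limiting argument. Integrability of $f(Z)$ is immediate from the Lipschitz bound $|f(z)| \le |f(0)| + \|z\|$ together with $\E\|Z\| < \infty$.

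The workhorse will be the Gaussian log-Sobolev inequality of Gross: for every smooth $g \colon \R^m \to \R$ with the relevant integrals finite,
$$\E[g(Z)^2 \log g(Z)^2] - \E[g(Z)^2]\log \E[g(Z)^2] \;\le\; 2\,\E\|\nabla g(Z)\|^2.$$
I will take this inequality as a black-box input; it has several short proofs, e.g.\ via tensorization from the two-point inequality or via the Ornstein--Uhlenbeck semigroup. I will then apply it to $g = e^{\lambda f/2}$ for $\lambda > 0$. Setting $H(\lambda) := \E\, e^{\lambda f(Z)}$, a direct computation shows $\E\|\nabla g\|^2 \le (\lambda^2/4)\, H(\lambda)$ because $\|\nabla f\|\le 1$, and the log-Sobolev inequality rearranges into the differential inequality
$$\lambda H'(\lambda) - H(\lambda)\log H(\lambda) \;\le\; \tfrac{\lambda^2}{2}\, H(\lambda),$$
equivalently $\frac{d}{d\lambda}\bigl(\lambda^{-1}\log H(\lambda)\bigr) \le \tfrac12$ for $\lambda > 0$. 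Integrating from $0^+$ to $\lambda$ and using $\lim_{\lambda \to 0^+} \lambda^{-1}\log H(\lambda) = \E f(Z)$ (by L'H\^opital) yields
$$\log H(\lambda) \;\le\; \lambda\, \E f(Z) + \tfrac{\lambda^2}{2},$$
i.e.\ $f(Z) - \E f(Z)$ is sub-Gaussian with variance proxy $1$.

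The final step is a routine Chernoff bound: for any $t, \lambda > 0$,
$$\P\bigl(f(Z) - \E f(Z) > t\bigr) \;\le\; e^{-\lambda t}\, \E\, e^{\lambda(f(Z) - \E f(Z))} \;\le\; e^{-\lambda t + \lambda^2/2},$$
and optimizing by setting $\lambda = t$ gives the one-sided bound $e^{-t^2/2}$. Running the same argument with $-f$ in place of $f$ (which is also smooth and $1$-Lipschitz) and taking a union bound yields the desired two-sided inequality with the factor of $2$. The main obstacle is the Gaussian log-Sobolev inequality itself, which is the sole nontrivial analytic input; everything else is bookkeeping. I expect the smoothing/approximation step to be routine but tedious if written out in detail, so in the write-up I would dispatch it in a single sentence citing standard mollification, and spend the bulk of the argument on the Herbst calculation.
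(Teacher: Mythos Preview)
Your proof via Herbst's argument and the Gaussian logarithmic Sobolev inequality is correct and is one of the standard routes to this result. However, the paper does not actually prove this theorem: it is stated in the auxiliary results section purely ``for the convenience of the reader'' and is cited as a well-known fact from Ledoux's monograph (Theorem~7.1 there). So there is nothing to compare against; your write-up supplies a proof where the paper simply invokes the literature. If anything, the approach you outline is essentially the one Ledoux presents in the cited reference, so in that sense you are filling in exactly the black box the authors chose to import.
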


The following lemma appears as Lemma~7.3 in~\cite{chatterjee2019new}.
\begin{lemma}\label{lem:1dapprox}
	Let $\theta \in \R^n.$ Let us define $\overline{\theta} = (\sum_{i = 1}^{n} \theta_i)/n.$ Then we have the following inequality:
	\begin{equation*}
		\sum_{i = 1}^{n} \big(\theta_i - \overline{\theta}\big)^2 \leq n \TV(\theta)^2\,.
	\end{equation*}
\end{lemma}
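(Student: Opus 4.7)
The inequality is a simple ``range bounds everything'' estimate, so the plan is just to control each summand individually by $\TV(\theta)^2$ and then sum $n$ of them.

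First, I would show that for every $i \in [n]$ one has $|\theta_i - \overline{\theta}| \leq \TV(\theta)$. Since $\overline{\theta}$ is a convex combination of the coordinates $\theta_1, \ldots, \theta_n$, it lies between $\min_j \theta_j$ and $\max_j \theta_j$, and therefore
\[
|\theta_i - \overline{\theta}| \;\leq\; \max_j \theta_j - \min_j \theta_j.
\]
Next, I would bound this oscillation by the total variation. Pick $j^*$ and $k^*$ achieving the max and min respectively; assume without loss of generality $j^* < k^*$. Writing $\theta_{k^*} - \theta_{j^*}$ as the telescoping sum $\sum_{\ell = j^*}^{k^* - 1} (\theta_{\ell+1} - \theta_\ell)$ and applying the triangle inequality gives
\[
\max_j \theta_j - \min_j \theta_j \;\leq\; \sum_{\ell = 1}^{n-1} |\theta_{\ell+1} - \theta_\ell| \;=\; \TV(\theta).
\]

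Combining the two displays yields $|\theta_i - \overline{\theta}|^2 \leq \TV(\theta)^2$ for every $i$, and summing over $i \in [n]$ gives the stated inequality $\sum_{i=1}^n (\theta_i - \overline{\theta})^2 \leq n\,\TV(\theta)^2$. There is no real obstacle here; the only thing to be slightly careful about is handling the case where the min is attained at an index larger than the max (swap roles) and the trivial case when $\theta$ is constant (both sides are zero). The bound is of course not tight in general, but the factor of $n$ is what is needed in the application in Theorem~\ref{thm:dcadap} for the $|S| = 1$ case, paralleling the role played by Proposition~\ref{prop:gagliardo} when $|S| \geq 2$.
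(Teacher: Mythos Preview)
Your argument is correct: bounding $|\theta_i-\overline\theta|$ by the range $\max_j\theta_j-\min_j\theta_j$ and then by $\TV(\theta)$ via a telescoping sum is entirely valid, and squaring and summing gives exactly the stated inequality. The paper does not actually prove this lemma in-line; it simply cites it as Lemma~7.3 of the companion paper~\cite{chatterjee2019new}, so there is nothing further to compare against here.
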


\bibliographystyle{chicago}

\begin{thebibliography}{}
	
	\bibitem[\protect\citeauthoryear{Berman, DasGupta, and Muthukrishnan}{Berman
		et~al.}{2002}]{berman2002exact}
	Berman, P., B.~DasGupta, and S.~Muthukrishnan (2002).
	\newblock Exact size of binary space partitionings and improved rectangle
	tiling algorithms.
	\newblock {\em SIAM Journal on Discrete Mathematics\/}~{\em 15\/}(2), 252--267.
	
	\bibitem[\protect\citeauthoryear{Bertin and Lecu{\'e}}{Bertin and
		Lecu{\'e}}{2008}]{bertin2008selection}
	Bertin, K. and G.~Lecu{\'e} (2008).
	\newblock Selection of variables and dimension reduction in high-dimensional
	non-parametric regression.
	\newblock {\em Electronic Journal of Statistics\/}~{\em 2}, 1224--1241.
	
	\bibitem[\protect\citeauthoryear{Bertsimas and Dunn}{Bertsimas and
		Dunn}{2017}]{bertsimas2017optimal}
	Bertsimas, D. and J.~Dunn (2017).
	\newblock Optimal classification trees.
	\newblock {\em Machine Learning\/}~{\em 106\/}(7), 1039--1082.
	
	\bibitem[\protect\citeauthoryear{Birman and Solomjak}{Birman and
		Solomjak}{1967}]{BirmanSolomjak67}
	Birman, M.~S. and M.~Z. Solomjak (1967).
	\newblock Piecewise-polynomial approximation of functions of the classes $w_p$.
	\newblock {\em Mathematics of the USSR Sbornik\/}~{\em 73}, 295--317.
	
	\bibitem[\protect\citeauthoryear{Blanchard, Sch{\"a}fer, Rozenholc, and
		M{\"u}ller}{Blanchard et~al.}{2007}]{blanchard2007optimal}
	Blanchard, G., C.~Sch{\"a}fer, Y.~Rozenholc, and K.-R. M{\"u}ller (2007).
	\newblock Optimal dyadic decision trees.
	\newblock {\em Machine Learning\/}~{\em 66\/}(2-3), 209--241.
	
	\bibitem[\protect\citeauthoryear{Boysen, Kempe, Liebscher, Munk, and
		Wittich}{Boysen et~al.}{2009}]{boysen2009consistencies}
	Boysen, L., A.~Kempe, V.~Liebscher, A.~Munk, and O.~Wittich (2009).
	\newblock Consistencies and rates of convergence of jump-penalized least
	squares estimators.
	\newblock {\em The Annals of Statistics\/}~{\em 37\/}(1), 157--183.
	
	\bibitem[\protect\citeauthoryear{Breiman, Friedman, Olshen, and Stone}{Breiman
		et~al.}{1984}]{breiman1984classification}
	Breiman, L., J.~Friedman, R.~Olshen, and C.~Stone (1984).
	\newblock Classification and regression trees. wadsworth int.
	\newblock {\em Group\/}~{\em 37\/}(15), 237--251.
	
	\bibitem[\protect\citeauthoryear{Chatterjee}{Chatterjee}{2016}]{chatterjee2016improved}
	Chatterjee, S. (2016).
	\newblock An improved global risk bound in concave regression.
	\newblock {\em Electronic Journal of Statistics\/}~{\em 10\/}(1), 1608--1629.
	
	\bibitem[\protect\citeauthoryear{Chatterjee and Goswami}{Chatterjee and
		Goswami}{2019}]{chatterjee2019new}
	Chatterjee, S. and S.~Goswami (2019).
	\newblock New risk bounds for 2d total variation denoising.
	\newblock {\em arXiv preprint arXiv:1902.01215\/}.
	
	\bibitem[\protect\citeauthoryear{Chatterjee, Guntuboyina, and Sen}{Chatterjee
		et~al.}{2015}]{chatterjee2015risk}
	Chatterjee, S., A.~Guntuboyina, and B.~Sen (2015).
	\newblock On risk bounds in isotonic and other shape restricted regression
	problems.
	\newblock {\em The Annals of Statistics\/}~{\em 43\/}(4), 1774--1800.
	
	\bibitem[\protect\citeauthoryear{Chatterjee, Guntuboyina, and Sen}{Chatterjee
		et~al.}{2018}]{chatterjee2018matrix}
	Chatterjee, S., A.~Guntuboyina, and B.~Sen (2018).
	\newblock On matrix estimation under monotonicity constraints.
	\newblock {\em Bernoulli\/}~{\em 24\/}(2), 1072--1100.
	
	\bibitem[\protect\citeauthoryear{Chatterjee and Lafferty}{Chatterjee and
		Lafferty}{2018}]{chatterjee2018denoising}
	Chatterjee, S. and J.~Lafferty (2018).
	\newblock Denoising flows on trees.
	\newblock {\em IEEE Transactions on Information Theory\/}~{\em 64\/}(3),
	1767--1783.
	
	\bibitem[\protect\citeauthoryear{Chaudhuri, Huang, Loh, and Yao}{Chaudhuri
		et~al.}{1994}]{chaudhuri1994piecewise}
	Chaudhuri, P., M.-C. Huang, W.-Y. Loh, and R.~Yao (1994).
	\newblock Piecewise-polynomial regression trees.
	\newblock {\em Statistica Sinica\/}, 143--167.
	
	\bibitem[\protect\citeauthoryear{Dalalyan, Hebiri, and Lederer}{Dalalyan
		et~al.}{2017}]{dalalyan2017tvd}
	Dalalyan, A., M.~Hebiri, and J.~Lederer (2017).
	\newblock On the prediction performance of the lasso.
	\newblock {\em Bernoulli\/}~{\em 23\/}(1), 552--581.
	
	\bibitem[\protect\citeauthoryear{de~Berg}{de~Berg}{1995}]{de1995linear}
	de~Berg, M. (1995).
	\newblock Linear size binary space partitions for fat objects.
	\newblock In {\em European Symposium on Algorithms}, pp.\  252--263. Springer.
	
	\bibitem[\protect\citeauthoryear{Deng and Zhang}{Deng and
		Zhang}{2018}]{deng2018isotonic}
	Deng, H. and C.-H. Zhang (2018).
	\newblock Isotonic regression in multi-dimensional spaces and graphs.
	\newblock {\em arXiv preprint arXiv:1812.08944\/}.
	
	\bibitem[\protect\citeauthoryear{Donoho}{Donoho}{1997}]{donoho1997cart}
	Donoho, D.~L. (1997).
	\newblock {CART} and best-ortho-basis: a connection.
	\newblock {\em The Annals of Statistics\/}~{\em 25\/}(5), 1870--1911.
	
	\bibitem[\protect\citeauthoryear{Donoho and Johnstone}{Donoho and
		Johnstone}{1994}]{donoho1994ideal}
	Donoho, D.~L. and I.~M. Johnstone (1994).
	\newblock Ideal spatial adaptation by wavelet shrinkage.
	\newblock {\em Biometrika\/}~{\em 81\/}(3), 425--455.
	
	\bibitem[\protect\citeauthoryear{Donoho and Johnstone}{Donoho and
		Johnstone}{1998}]{donoho1998minimax}
	Donoho, D.~L. and I.~M. Johnstone (1998).
	\newblock Minimax estimation via wavelet shrinkage.
	\newblock {\em The Annals of Statistics\/}~{\em 26\/}(3), 879--921.
	
	\bibitem[\protect\citeauthoryear{Friedman, Hastie, and Tibshirani}{Friedman
		et~al.}{2001}]{friedman2001elements}
	Friedman, J., T.~Hastie, and R.~Tibshirani (2001).
	\newblock {\em The elements of statistical learning}, Volume~1.
	\newblock Springer series in statistics New York.
	
	\bibitem[\protect\citeauthoryear{Gey and Nedelec}{Gey and
		Nedelec}{2005}]{gey2005model}
	Gey, S. and E.~Nedelec (2005).
	\newblock Model selection for cart regression trees.
	\newblock {\em IEEE Transactions on Information Theory\/}~{\em 51\/}(2),
	658--670.
	
	\bibitem[\protect\citeauthoryear{Guntuboyina, Lieu, Chatterjee, and
		Sen}{Guntuboyina et~al.}{2020}]{guntuboyina2020adaptive}
	Guntuboyina, A., D.~Lieu, S.~Chatterjee, and B.~Sen (2020).
	\newblock Adaptive risk bounds in univariate total variation denoising and
	trend filtering.
	\newblock {\em The Annals of Statistics\/}~{\em 48\/}(1), 205--229.
	
	\bibitem[\protect\citeauthoryear{Guntuboyina and Sen}{Guntuboyina and
		Sen}{2015}]{GSvex}
	Guntuboyina, A. and B.~Sen (2015).
	\newblock Global risk bounds and adaptation in univariate convex regression.
	\newblock {\em Probability Theory and Related Fields\/}~{\em 163\/}(1-2),
	379--411.
	
	\bibitem[\protect\citeauthoryear{Han, Wang, Chatterjee, and Samworth}{Han
		et~al.}{2019}]{han2019isotonic}
	Han, Q., T.~Wang, S.~Chatterjee, and R.~J. Samworth (2019).
	\newblock Isotonic regression in general dimensions.
	\newblock {\em The Annals of Statistics\/}~{\em 47\/}(5), 2440--2471.
	
	\bibitem[\protect\citeauthoryear{Hershberger, Suri, and T{\'o}th}{Hershberger
		et~al.}{2005}]{hershberger2005binary}
	Hershberger, J., S.~Suri, and C.~D. T{\'o}th (2005).
	\newblock Binary space partitions of orthogonal subdivisions.
	\newblock {\em SIAM Journal on Computing\/}~{\em 34\/}(6), 1380--1397.
	
	\bibitem[\protect\citeauthoryear{H{\"u}tter and Rigollet}{H{\"u}tter and
		Rigollet}{2016}]{hutter2016optimal}
	H{\"u}tter, J.-C. and P.~Rigollet (2016).
	\newblock Optimal rates for total variation denoising.
	\newblock In {\em Conference on Learning Theory}, pp.\  1115--1146.
	
	\bibitem[\protect\citeauthoryear{Ishwaran}{Ishwaran}{2015}]{ishwaran2015effect}
	Ishwaran, H. (2015).
	\newblock The effect of splitting on random forests.
	\newblock {\em Machine Learning\/}~{\em 99\/}(1), 75--118.
	
	\bibitem[\protect\citeauthoryear{Kim, Koh, Boyd, and Gorinevsky}{Kim
		et~al.}{2009}]{kim2009ell_1}
	Kim, S.-J., K.~Koh, S.~Boyd, and D.~Gorinevsky (2009).
	\newblock {$\ell_1$} trend filtering.
	\newblock {\em SIAM Rev.\/}~{\em 51\/}(2), 339--360.
	
	\bibitem[\protect\citeauthoryear{Laurent and Rivest}{Laurent and
		Rivest}{1976}]{laurent1976constructing}
	Laurent, H. and R.~L. Rivest (1976).
	\newblock Constructing optimal binary decision trees is np-complete.
	\newblock {\em Information processing letters\/}~{\em 5\/}(1), 15--17.
	
	\bibitem[\protect\citeauthoryear{Lecu{\'e}}{Lecu{\'e}}{2008}]{lecue2008classification}
	Lecu{\'e}, G. (2008).
	\newblock Classification with minimax fast rates for classes of bayes rules
	with sparse representation.
	\newblock {\em Electronic Journal of Statistics\/}~{\em 2}, 741--773.
	
	\bibitem[\protect\citeauthoryear{Ledoux}{Ledoux}{2001}]{L01}
	Ledoux, M. (2001).
	\newblock The concentration of measure phenomenon, volume 89 of mathematical
	surveys and monographs.
	\newblock American Mathematical Society, Providence, RI.
	
	\bibitem[\protect\citeauthoryear{Leoni}{Leoni}{2017}]{leoni2017first}
	Leoni, G. (2017).
	\newblock {\em A first course in Sobolev spaces}.
	\newblock American Mathematical Soc.
	
	\bibitem[\protect\citeauthoryear{Loh}{Loh}{2014}]{loh2014fifty}
	Loh, W.-Y. (2014).
	\newblock Fifty years of classification and regression trees.
	\newblock {\em International Statistical Review\/}~{\em 82\/}(3), 329--348.
	
	\bibitem[\protect\citeauthoryear{Mammen and van~de Geer}{Mammen and van~de
		Geer}{1997}]{mammen1997locally}
	Mammen, E. and S.~van~de Geer (1997).
	\newblock Locally adaptive regression splines.
	\newblock {\em The Annals of Statistics\/}~{\em 25\/}(1), 387--413.
	
	\bibitem[\protect\citeauthoryear{Mangasarian and Schumaker}{Mangasarian and
		Schumaker}{1971}]{mangasarian1971discrete}
	Mangasarian, O.~L. and L.~L. Schumaker (1971).
	\newblock Discrete splines via mathematical programming.
	\newblock {\em SIAM Journal on Control\/}~{\em 9\/}(2), 174--183.
	
	\bibitem[\protect\citeauthoryear{Nemirovski}{Nemirovski}{2000}]{nemirovski2000topics}
	Nemirovski, A. (2000).
	\newblock Topics in non-parametric statistics.
	\newblock {\em Ecole d’Et{\'e} de Probabilit{\'e}s de Saint-Flour\/}~{\em
		28}, 85.
	
	\bibitem[\protect\citeauthoryear{Nowak, Mitra, and Willett}{Nowak
		et~al.}{2004}]{nowak2004estimating}
	Nowak, R., U.~Mitra, and R.~Willett (2004).
	\newblock Estimating inhomogeneous fields using wireless sensor networks.
	\newblock {\em IEEE Journal on Selected Areas in Communications\/}~{\em
		22\/}(6), 999--1006.
	
	\bibitem[\protect\citeauthoryear{Ortelli and van~de Geer}{Ortelli and van~de
		Geer}{2018}]{ortelli2018}
	Ortelli, F. and S.~van~de Geer (2018).
	\newblock On the total variation regularized estimator over a class of tree
	graphs.
	\newblock {\em Electron. J. Statist.\/}~{\em 12\/}(2), 4517--4570.
	
	\bibitem[\protect\citeauthoryear{Oymak and Hassibi}{Oymak and
		Hassibi}{2013}]{oymak2013sharp}
	Oymak, S. and B.~Hassibi (2013).
	\newblock Sharp {MSE} bounds for proximal denoising.
	\newblock {\em Foundations of Computational Mathematics\/}, 1--65.
	
	\bibitem[\protect\citeauthoryear{Rigollet and H{\"u}tter}{Rigollet and
		H{\"u}tter}{2015}]{rigollet2015high}
	Rigollet, P. and J.-C. H{\"u}tter (2015).
	\newblock High dimensional statistics.
	\newblock {\em Lecture notes for course 18S997\/}.
	
	\bibitem[\protect\citeauthoryear{Rudin, Osher, and Fatemi}{Rudin
		et~al.}{1992}]{rudin1992nonlinear}
	Rudin, L.~I., S.~Osher, and E.~Fatemi (1992).
	\newblock Nonlinear total variation based noise removal algorithms.
	\newblock {\em Physica D: Nonlinear Phenomena\/}~{\em 60\/}(1), 259--268.
	
	\bibitem[\protect\citeauthoryear{Sadhanala, Wang, and Tibshirani}{Sadhanala
		et~al.}{2016}]{sadhanala2016total}
	Sadhanala, V., Y.-X. Wang, and R.~J. Tibshirani (2016).
	\newblock Total variation classes beyond 1d: Minimax rates, and the limitations
	of linear smoothers.
	\newblock In {\em Advances in Neural Information Processing Systems}, pp.\
	3513--3521.
	
	\bibitem[\protect\citeauthoryear{Scornet, Biau, and Vert}{Scornet
		et~al.}{2015}]{scornet2015consistency}
	Scornet, E., G.~Biau, and J.-P. Vert (2015).
	\newblock Consistency of random forests.
	\newblock {\em The Annals of Statistics\/}~{\em 43\/}(4), 1716--1741.
	
	\bibitem[\protect\citeauthoryear{Scott and Nowak}{Scott and
		Nowak}{2006}]{scott2006minimax}
	Scott, C. and R.~D. Nowak (2006).
	\newblock Minimax-optimal classification with dyadic decision trees.
	\newblock {\em IEEE transactions on information theory\/}~{\em 52\/}(4),
	1335--1353.
	
	\bibitem[\protect\citeauthoryear{Steidl, Didas, and Neumann}{Steidl
		et~al.}{2006}]{steidl2006splines}
	Steidl, G., S.~Didas, and J.~Neumann (2006).
	\newblock Splines in higher order tv regularization.
	\newblock {\em International journal of computer vision\/}~{\em 70\/}(3),
	241--255.
	
	\bibitem[\protect\citeauthoryear{Tibshirani}{Tibshirani}{2015}]{tibshiraninonparametric}
	Tibshirani, R. (2015).
	\newblock Nonparametric regression (and classification).
	
	\bibitem[\protect\citeauthoryear{Tibshirani}{Tibshirani}{2014}]{tibshirani2014adaptive}
	Tibshirani, R.~J. (2014).
	\newblock Adaptive piecewise polynomial estimation via trend filtering.
	\newblock {\em The Annals of Statistics\/}~{\em 42\/}(1), 285--323.
	
	\bibitem[\protect\citeauthoryear{T{\'o}th}{T{\'o}th}{2005}]{toth2005binary}
	T{\'o}th, C.~D. (2005).
	\newblock Binary space partitions: recent developments.
	\newblock {\em Combinatorial and Computational Geometry. MSRI
		Publications\/}~{\em 52}, 529--556.
	
	\bibitem[\protect\citeauthoryear{van~de Geer and Ortelli}{van~de Geer and
		Ortelli}{2019}]{van2019prediction}
	van~de Geer, S. and F.~Ortelli (2019).
	\newblock Prediction bounds for (higher order) total variation regularized
	least squares.
	\newblock {\em arXiv preprint arXiv:1904.10871\/}.
	
	\bibitem[\protect\citeauthoryear{Wager and Walther}{Wager and
		Walther}{2015}]{wager2015adaptive}
	Wager, S. and G.~Walther (2015).
	\newblock Adaptive concentration of regression trees, with application to
	random forests.
	\newblock {\em arXiv preprint arXiv:1503.06388\/}.
	
	\bibitem[\protect\citeauthoryear{Wang, Smola, and Tibshirani}{Wang
		et~al.}{2014}]{wang2014falling}
	Wang, Y.-X., A.~J. Smola, and R.~J. Tibshirani (2014).
	\newblock The falling factorial basis and its statistical applications.
	\newblock In {\em ICML}, pp.\  730--738.
	
	\bibitem[\protect\citeauthoryear{Willett and Nowak}{Willett and
		Nowak}{2007}]{willett2007multiscale}
	Willett, R.~M. and R.~D. Nowak (2007).
	\newblock Multiscale poisson intensity and density estimation.
	\newblock {\em IEEE Transactions on Information Theory\/}~{\em 53\/}(9),
	3171--3187.
	
	\bibitem[\protect\citeauthoryear{Winkler and Liebscher}{Winkler and
		Liebscher}{2002}]{winkler2002smoothers}
	Winkler, G. and V.~Liebscher (2002).
	\newblock Smoothers for discontinuous signals.
	\newblock {\em Journal of Nonparametric Statistics\/}~{\em 14\/}(1-2),
	203--222.
	
	\bibitem[\protect\citeauthoryear{Wu}{Wu}{1982}]{wu1982potts}
	Wu, F.-Y. (1982).
	\newblock The potts model.
	\newblock {\em Reviews of modern physics\/}~{\em 54\/}(1), 235.
	
\end{thebibliography}
\def\noopsort#1{}

\end{document}